\theoremstyle{plain}
\newtheorem{theorem}{Theorem}[section]
\newtheorem{lemma}[theorem]{Lemma}
\newtheorem{corollary}[theorem]{Corollary}
\newtheorem{proposition}[theorem]{Proposition}
\newtheorem{question}[theorem]{Question}
\newtheorem{conjecture}[theorem]{Conjecture}
\newtheorem{problem}[theorem]{Problem}
\theoremstyle{definition}
\newtheorem{observation}[theorem]{Observation}
\newtheorem{definition}[theorem]{Definition}
\newtheorem{remark}[theorem]{Remark}
\global\long\def\ZZ {\mathbb{Z}}% enteros
\global\long\def\RR {\mathbb{R}}% enteros
\global\long\def\QQ {\mathbb{Q}}% enteros
\global\long\def\NN {\mathbb{N}}% naturales
\global\long\def\F{\mathcal{F}}%
\newcommand{\define}[1]{\textbf{#1}}
\global\long\def\M{\mathcal{\mathfrak{M}}}% % Letrita para el lattice de grados de medvedev. Las que siguen soon abreviaciones:
\newcommand{\meff}[1]{\M_{\texttt{EFF}}(#1)}
\newcommand{\msft}[1]{\M_{\texttt{SFT}}(#1)}
\newcommand{\msof}[1]{\M_{\texttt{SOF}}(#1)}
\title{%
	Medvedev degrees of subshifts on groups
}
\author{Sebasti\'an Barbieri and Nicanor Carrasco-Vargas}
\newcommand{\Addresses}{{
		\bigskip
		
		\hskip-\parindent   S.~Barbieri, \textsc{Departamento de Matem\'{a}tica y ciencia de la computaci\'{o}n, Universidad de Santiago de Chile, Santiago, Chile.}\par\nopagebreak
		\textit{E-mail address}: \texttt{sebastian.barbieri@usach.cl}
		
		\medskip
		
		\hskip-\parindent   N.~Carrasco-Vargas, \textsc{Mathematics faculty, Jagiellonian University, Krak\'ow, Poland.}\par\nopagebreak
		\textit{E-mail address}: \texttt{nicanor.vargas@uj.edu.pl}
}}
\begin{document}
\maketitle

%These degrees can be assigned to subshifts on all finitely generated groups, are invariant by topological conjugacy, and do not increase by factor maps.

\begin{abstract}
    The Medvedev degree of a subshift is a dynamical invariant of computable origin that can be used to compare the complexity of subshifts that contain only uncomputable configurations. We develop theory to describe how these degrees can be transferred from one group to another through algebraic and geometric relations, such as quotients, subgroups, translation-like actions and quasi-isometries.
    
    We use the aforementioned tools to study the possible values taken by this invariant on subshifts of finite type on some finitely generated groups. We obtain a full classification for some classes, such as virtually polycyclic groups and branch groups with decidable word problem. We also show that all groups which are quasi-isometric to the hyperbolic plane admit SFTs with nonzero Medvedev degree. Furthermore, we provide a classification of the degrees of sofic subshifts for several classes of groups. 
    
    %These results are obtained from a classification proved by Simpson for SFTs on $\mathbb Z^d$, $d\geq 1$, and from results about Medvedev degrees of effective subshifts combined with existing simulation results. 
    
    %We observe that for all recursively presented groups where the classification is known, it consists of a dichotomy: either all nonempty SFTs have zero degree, or attain all $\Pi_1^0$ degrees. We conjecture that this is always the case. Finally, we establish a relation between the existence of SFTs with nonzero Medvedev degree, the existence of weakly aperiodic SFTs, and the decidability of the domino problem.  

		\medskip
		
		\noindent
		\emph{Keywords: Medvedev degrees, symbolic dynamics, topological conjugacy invariants, quasi-isometries.}
		
		\noindent
		\emph{MSC2020: 
                37B10, % Dynamical systems and ergodic theory ->  Topological dynamics -> Symbolic dynamics
                03D30, %Other degrees and reducibilities in computability and recursion theory
                %03D78, % Computation over the reals, computable analysis
                20F10. % Word problems, other decision problems, connections with logic and automata (group-theoretic aspects) 
                }

	\end{abstract}
 
\section{Introduction}

%Subshifts of finite type (SFTs) on finitely generated groups can be described as sets of tilings of their Cayley graphs. These dynamical systems have played a key role in the understanding of more general systems through Markov partitions and related constructions. 

The study of subshifts of finite type (SFTs) on groups other than $\mathbb{Z}$ has been revolutionized by a number of results that relate their dynamical aspects in strong ways with recursion-theoretical notions. For instance, the topological entropies of SFTs on $\mathbb{Z}^{2}$ and other amenable groups, have been characterized as the nonnegative $\Pi_1^0$ real numbers~\cite{hochman_characterization_2010,barbieri_entropies_2021,bartholdi_salo_shift_lamplighter_2024}. We also recall that the first aperiodic SFT on $\mathbb{Z}^{2}$ came along with a proof of the undecidability of the emptiness problem for $\mathbb{Z}^{2}$-SFTs~\cite{berger_undecidability_1966}. These and other related results emphasize the fact that understanding recursion-theoretical aspects of SFTs on groups is paramount in order to properly describe their dynamics.  

%Hanf and Myers proved the existence of a finite set of $1\times1$ square tiles with matching rules, such that every tiling that respects these matching rules is non-computable when seen as a function $\mathbb{Z}^{2}\to\{\text{tiles}\}$ \cite{hanf_Nonrecursive_1974,myers_Nonrecursive_1974}. In a different terminology, they exhibited a subshift of finite type (SFT) on $\mathbb{Z}^{2}$ whose Medvedev degree is nonzero. 
Hanf and Myers proved the existence of a $\mathbb{Z}^2$-SFT on some finite alphabet $A$ such that every configuration $x\colon \mathbb{Z}^2\to A$ is an uncomputable function~\cite{hanf_Nonrecursive_1974,myers_Nonrecursive_1974}. This result can be further refined using the language of Medvedev degrees. Informally, the Medvedev degree of a set measures the algorithmic complexity of computing one of its elements, and can be used to meaningfully distinguish sets without computable points. A formal definition of Medvedev degrees is given in~\Cref{section:Medvedev}.

In this work we propose a systematic study of Medvedev degrees as a topological conjugacy invariant for subshifts on groups. These degrees form a lattice and capture the algorithmic complexity of a subshift. For instance, the Medvedev degree of a subshift is zero exactly when it has computable configurations. As a measure of complexity, the Medvedev degree of a subshift shares some properties with topological entropy for amenable groups: it does not increase under factor maps, and behaves nicely with direct products and disjoint unions. These properties hold for subshifts on any finitely generated group, with no assumption on the complexity of its word problem or amenability. 

The main goal of this project is to study the following classification problem:

\begin{problem}\label{problem}
    Given a finitely generated group $G$, what is the class of Medvedev degrees of $G$-SFTs?
\end{problem}

Remarkable effort has been put into classifying the possible values of topological entropy for SFTs. Lind~\cite{Lind1984} provided a full classification of the entropies of $\ZZ$-SFTs, Hochman and Meyerovitch provided a classification for $\ZZ^d$-SFTs for $d\geq 2$, the first author extended their classification to several classes of amenable groups~\cite{barbieri_entropies_2021}, while Bartholdi and Salo recently extended it to Baumslag-Solitar and Lamplighter groups~\cite{bartholdi_salo_shift_lamplighter_2024}. This work can be thought of as an analogous effort for Medvedev degrees.

\subsubsection*{State of the art}

There is a complete answer for $\ZZ^d$, $d\geq 1$. Indeed, it is well-known that every nonempty $\mathbb{Z}$-SFT has finite orbits which implies that its Medvedev degree is zero. On the other hand, Simpson proved that the class of Medvedev degrees of nonempty SFTs on $\mathbb{Z}^{d}$, $d\geq2$ is the class of $\Pi_{1}^{0}$ degrees \cite{simpson_Medvedev_2014}. This result implies and refines greatly the result of Hanf and Myers~\cite{hanf_Nonrecursive_1974,myers_Nonrecursive_1974}. In addition, nonempty SFTs with nonzero Medvedev degree have been constructed on Baumslag Solitar groups $\operatorname{BS}(n,m)$, $n,m\geq 1$ \cite{aubrun_Tiling_2013}, and the lamplighter group~\cite{bartholdi_salo_shift_lamplighter_2024}. 

We also mention that for the more general class of effective subshifts on $\ZZ$, Medvedev degrees have been characterized as those $\Pi_1^0$ degrees by Miller~\cite{miller_Two_2012}. This result was later extended to all infinite finitely generated groups with decidable word problem by the second author~\cite{carrasco-vargas_geometric_2023_}. We also mention that Medvedev degrees of subshifts have been considered in \cite{MichaelHochman2009DiscreteandContinuousDynamicalSystems,ballier2013universality} in relation to the existence of systems that are universal for factor maps~\cite{Nicanor_rokhlik_2026}.   

\subsubsection*{Main results}

Given a finitely generated group $G$, we denote by $\msft{G}$ the class of Medvedev degrees of $G$-SFTs. First we observe that if $G$ is recursively presented, then $\msft{G}$ must be contained in the class of $\Pi_1^0$ degrees (see~\Cref{prop:msft-and-msoff-are-contained-in-Pi-1-degrees}). In order to study~\Cref{problem}, we study the behavior of $\msft{G}$ and some variants with respect to different group theoretical relations, such as subgroups (\Cref{cor:subgroups_decidable_membership}), commensurability (\Cref{prop:commensurable_same_degrees}), quotients (\Cref{prop:quotients_fg}), translation-like actions (\Cref{prop:medvedev-degrees-and-translation-like-actions,prop:translation-like-action-subshift-is-effective} and~\Cref{prop:translation-like-actions-and-medvedev-degrees}), and quasi-isometries (\Cref{lem:QI_medvedev} and~\Cref{coro_QI}).

We provide a full classification of $\msft{G}$ for virtually polycyclic groups (\Cref{thm:polycyclic}), namely, we show that for a virtually polycyclic group $G$ which is virtually cyclic then $\msft{G}$ consists of only the zero degree, and otherwise $\msft{G}$ is the class of all $\Pi_1^0$ Medvedev degrees. We also show that $\msft{G}$ is this latter class for direct products of infinite groups with decidable word problem (\Cref{thm:direct_products_clasification_medvedev_degrees}), and branch groups with decidable word problem (\Cref{cor:branch_groups}). Without hypotheses on the word problem, we are still able to show that all direct products of groups and branch groups admit SFTs with nonzero Medvedev degree (\Cref{thm:direct_products_nontrivial_medvedev_degrees}). Furthermore, we prove that all groups which are quasi-isometric to the hyperbolic plane admit SFTs with nonzero Medvedev degree (\Cref{thm:hyperbolic_qi_medvedev}). 

For some groups where we are not able to compute $\msft{G}$ we can still prove that the class $\msof{G}$ of Medvedev degrees of sofic subshifts equals the class of $\Pi_1^0$ Medvedev degrees. This is done coupling our results with existing results about Medvedev degrees of effective subshifts, and existing simulation results. Simulation results relate sofic subshifts on one group with effective subshifts on a different group (see \Cref{subsec:simulation} for a precise definition).

We prove that $\msof{G}$ equals the class of $\Pi_1^0$ Medvedev degrees for all infinite finitely generated groups with decidable word problem that simulate some other group verifying the same hypotheses (\Cref{prop:simulation}). This result covers self-simulable groups with decidable word problem, such as Thompson's $V$, $\operatorname{GL}_n(\ZZ)$, $\operatorname{SL}_n(\ZZ)$, $\operatorname{Aut}(F_n)$ and $\operatorname{Out}(F_n)$ for $n\geq 5$, where $F_n$ denotes the free groups on $n$ generators~\cite{Barbieri_Sablik_Salo_2021}. By the results in~\cite{bartholdi_salo_shift_lamplighter_2024}, this also applies to the Baumslag-Solitar groups $\operatorname{BS}(1,n)$, $n\geq 1$, and the lamplighter group.   

\subsubsection*{Open problems}

Let us note that a finitely generated group admits a nonempty SFT with nonzero Medvedev degree if and only if it admits a sofic subshift with nonzero Medvedev degree. However, this does not necessarily imply that $\msft{G}=\msof{G}$. This raises the question of whether every sofic $G$-subshift admits an SFT extension with equal Medvedev degree (\Cref{question:equal_degree_extension}). This is a Medvedev-degree version of the question of whether every sofic subshift on an amenable group admits an equal entropy SFT extension (see \cite[Problem 9.4]{hochman_characterization_2010}).

The classical observation that $\ZZ$-SFTs can only achieve the zero degree can be easily generalized to virtually free groups (\Cref{prop:virtually_free_have_0_degree}). In every other finitely generated and recursively presented group where the set $\msft{G}$ is known, it is the set of all $\Pi_1^0$ degrees. This leads us to conjecture (\Cref{conjecture_allSFTSarePi10completeclasses}) that for every infinite, finitely generated and recursively presented group which is not virtually free, then $\msft{G}$ is the set of all $\Pi_1^0$ Medvedev degrees.

For finitely generated groups which are not recursively presented, the $\Pi_1^0$ bound may not hold. In that context we pose the less ambitious conjecture that if $G$ is finitely generated and not virtually free, then there exists a nonempty $G$-SFT with nontrivial Medvedev degree (\Cref{conj:uncomputableconfig}).

Overall, we expect that positive solutions to any of these two conjectures would be rather distant with the existing techniques. \Cref{conj:uncomputableconfig} would imply a positive solution to Carroll and Penland's conjecture about groups admitting weakly aperiodic SFTs \cite{carroll_Periodic_2015}, and to Ballier and Stein's conjecture about groups with undecidable domino problem~\cite{ballier_domino_2018} (see \Cref{Medvedev-conjecture-is-pulent}). 

\subsubsection*{Acknowledgements}
The authors acknowledge that this project was partly developed while attending CIRM's thematic month ``Discrete Mathematics \& Computer Science: Groups, Dynamics, Complexity, Words''. S. Barbieri was supported by ANID 1240085 FONDECYT regular. N. Carrasco-Vargas was supported by ANID 21201185, ANID/CENIA FB210017, MSCA 731143, and a grant from the Priority Research Area SciMat under the Strategic Programme Excellence Initiative at Jagiellonian University.
\section{Preliminaries}\label{sec:preliminaries}

\subsection{Finitely generated groups}

Let $G$ be a finitely generated group and $S$ be a finite symmetric set of generators. For a word $w\in S^*$ we write $\underline{w}$ for the corresponding element of $G$. We denote $F(S)$ the free group generated by $S$.

Given a finitely generated subgroup $H \leqslant G$, the membership problem of $H$ in $G$ is the language \[ \texttt{WP}_S(G,H) = \{ w \in S^* : \underline{w} \in H\}.     \]

In the case where $H$ is the trivial group the set $\texttt{WP}_S(G) = \texttt{WP}_S(G,1)$ is called the word problem of $G$. A group is called recursively presented if $\texttt{WP}_S(G)$ is a recursively enumerable language for some (equivalently every) generating set $S$.  
%
%\begin{lemma}\label{membership-and-word-problem}
%    Let $H\leqslant G$ be finitely generated groups. 
%    \begin{enumerate}
%        \item If $H$ is a normal subgroup of $G$, then $H$ has decidable subgroup membership problem in $G$ if and only if $G/H$ has decidable word problem.  
%        \item If $H$ has finite index in $G$, then it has decidable membership problem in $G$.
%    \end{enumerate}
%\end{lemma}
%\begin{proof}
%    The proof of the first item follows immediately from the definitions. For the second item, suppose $H$ has finite index in $G$. It is well known that then there must exist a normal subgroup $H'$ with finite index and contained in $H$. As $G/H'$ is a finite group, it has decidable word problem, and then by the first item, $H'\leqslant G$ has decidable membership problem. Then the fact that $H$ is a finite union of cosets of $H'$ implies that $H$ has decidable subgroup membership problem in $G$ as well. 
%\end{proof}

\begin{definition}
    Let $H$ and $G$ be finitely generated by $T$ and $S$, respectively. A map $f\colon H \to G$ is \define{computable} if there exists a computable map $\widehat{f}\colon T^* \to S^*$ such that,  whenever $u$ is a word representing $h\in H$, $\widehat{f}(u)$ is a word representing $f(h)$ in $G$.
\end{definition} 

It is easily seen that the computability  of $f$ does not depend of the generators chosen. Observe that every  homomorphism between finitely generated groups is computable: we can always take $\widehat f$  as a monoid homomorphism $T^\ast\to S^\ast$, and such a map is computable. 

\subsection{Shift spaces and morphisms}\label{subsec:shift_spaces}

    Let $A$ be an alphabet set and $G$ be a group. The \define{full $G$-shift} is the set $A^{G} = \{ x\colon G \to A\}$ equipped with the prodiscrete topology and with the left \define{shift} action $G \curvearrowright A^{G}$ by left multiplication given by 
\[ (g x)(h) = x(g^{-1}h) \qquad \mbox{  for every } g,h \in G 
\mbox{ and } x \in A^G. \]

The elements $x \in A^G$ are called \define{configurations}. For a finite set $F\subset G$, a \define{pattern} with support $F$ is an element $p \in A^F$. We denote the cylinder generated by $p$ by $[p] = \{ x \in A^{G} : x|_F = p \}$ and note that the cylinders are a clopen base for the prodiscrete topology on $A^{G}$. 
 
\begin{definition}
	A $G$-\define{subshift} is a $G$-invariant and closed subset $X \subset A^G$.
\end{definition}

When the context is clear, we will drop the $G$ and plainly speak of a subshift. Equivalently, $X$ is a $G$-subshift if and only if there exists a set of forbidden patterns $\F$ such that \[X= X_{\F} = \{ x \in A^{G} : gx\notin [p] \mbox{ for every } g \in G, p \in \F  \}.\]

Given two subshifts $X\subset A^G$ and $Y \subset B^G$, a map $\phi\colon X\to Y$ is called a \define{morphism} if it is continuous and $G$-equivariant. By the Curtis-Hedlund-Lyndon theorem (see~\cite[Theorem 1.8.1]{ceccherini-silberstein_Cellular_2010}) a map $\phi\colon X \to Y$ is a morphism if and only if there is a finite $F\subset G$ and $\Phi \colon A^F\to B$ such that $\phi(x)(g) = \Phi((g^{-1} x)|_{F})$ for every $x \in X$, $g \in G$. A morphism $\phi \colon X \to Y$ is a \define{topological factor map} if it is surjective and a \define{topological conjugacy} if it is bijective.

Three countable classes of subshifts are of special interest in the literature, their common theme is that they can be defined using finite information. 

\begin{definition}
    A subshift $X$ is of \define{finite type} (SFT) if there exists a finite set of forbidden patterns $\mathcal{F}$ for which $X = X_{\mathcal{F}}$.
\end{definition}

\begin{definition}
     A subshift $Y$ is \define{sofic} if there exists an SFT $X$ and a topological factor map $\phi \colon X \to Y$.
\end{definition}

The third class of interest is that of effective subshifts. Intuitively, these are the subshifts that can be described through a Turing machine. Their definition on finitely generated groups is subtle when the underlying group is not recursively presented, thus we will delay its definition to the next section.

\subsection{Computability on Cantor spaces}

Let $A$ be a finite set with $|A|\geq 2$, then $A^{\NN}$ is a Cantor space when endowed with the prodiscrete topology. A set $U \subset A^{\NN} = \{x\colon \NN \to A\}$ is called \define{effectively open} or $\Sigma_1^0$ if there is a Turing machine which enumerates a sequence of words $(w_i)_{i \in \NN}$ in $A^*$ such that $U = \bigcup_{i \in \NN}[w_i]$ where $[w_i] = \{x \in A^{\NN} : w_i \mbox{ is a prefix of } x\}$. A set $C\subset A^{\NN}$ is \define{effectively closed} or $\Pi_1^0$ if it is the complement of an effectively open set. 

Let $A,B$ be alphabets, we say that a map $f\colon X\subset A^{\NN} \to B^{\NN}$ is \define{computable} if there is a Turing machine which on input $w\in B^*$ outputs a sequence of words that describes an effectively open set $U_w\subset A^{\NN}$ such that $f^{-1}([w]) = U_w \cap X$. Given $X\subset A^{\NN}$ and $Y\subset B^{\NN}$, a bijective map $f\colon X \to Y$ is called a recursive homeomorphism if both $f$ and its inverse are computable.

The main goal of this article is to deal with computability properties of subshifts on groups, which can be seen abstractly as subsets of a Cantor space. This identification is straightforward when $G$ has decidable word problem. Indeed, in this case the group $G$ can be computably identified with $\NN$ endowed with a group operation that is computable as a map $\NN^2\to \NN$. In this setting, a group isomorphism $\nu\colon\NN\to G$ is said to be a computable numbering of $G$. Such a numbering allows us to define a homeomorphism $\delta \colon A^\NN\to A^G$ by $\delta(x) = (x_{\nu^{-1}(g)})_{g\in G}$. We declare this homeomorphism to be computable, so we can unambiguously speak about $\Sigma_1^0$ and $\Pi_1^0$ subsets of $A^{G}$. This identification works naturally for finitely generated free groups.

Let us now consider a general group $G$ which is finitely generated by $S$. Let $F(S)$ be the free group generated by $S$, and let $\rho\colon F(S)\to G$ be the canonical surjective group homomorphism. We define an injective and continuous function $\rho^* \colon A^G\to A^{F(S)}$ given by $\rho^*(x)(w) = x(\rho(w))$ for every $w \in F(s)$.

\begin{definition}
    Let $G$ be finitely generated by $S$. For a subshift $X\subset A^G$, we denote by $\widehat{X}\subset A^{F(S)}$ the \textbf{pullback subshift} $\widehat{X} = \rho^*(X)$
\end{definition}

Notice that $\ker(\rho)$ acts trivially on $\widehat{X}$, and thus the shift action $F(S) \curvearrowright \widehat{X}$ can be identified with $G\curvearrowright\widehat{X}$. From a dynamical point of view, the pullback subshift has all the information that we need, as the actions $G\curvearrowright X$ and $G\curvearrowright \widehat{X}$ are topologically conjugate. However, it is much easier to deal with a subshift on the free group from the point-view of recursion theory. For instance, it is clear that a morphism on $A^{F(S)}$ is computable, and thus every subshift which is topologically conjugate to $\widehat{X}$ is recursively homeomorphic to it. In particular, the recursive properties of the pullback do not depend upon the set of generators we choose. Now we can give the general definition of effective subshift.

\begin{definition}
    Let $G$ be a finitely generated group and $S$ a finite set of generators. A subshift $X\subset A^{G}$ is called \define{effective} if the pullback subshift $\widehat{X}$ is a $\Pi_1^0$ subset of $A^{F(S)}$.
\end{definition}

In the case where $G$ is also recursively presented, an equivalent and more intuitive way of thinking about an effective subshift $X\subset A^G$ is the following: a pattern coding is a map $c \colon F \to A$ where $F$ is a finite subset of $S^*$. Given a pattern coding, its cylinder set is given by $[c]= \{x \in A^G : \mbox{ for every } w \in F, x(\underline{w})=c(w)\}$. In a recursively presented group, an effective subshift is one for which there is a Turing machine which enumerates a sequence of pattern codings $(c_i)_{i \in \NN}$ such that \[X = A^G \setminus \bigcup_{i \in \NN, g \in G}g[c_i].\]

For non-recursively presented groups, a subshift which satisfies this property is called \define{effectively closed by patterns} and it is a strictly weaker notion than being effective. An interested reader can find more about the relation between these two notions in~\cite{barbieri_carrasco_rojas_2024_effective}.

%This means that there exists a Turing machine which enumerates a sequence $(p_i)_{i \in \NN}$ of patterns in $F(S)$ such that $\widehat{Z} = A^{F(S)}\setminus \bigcup_{i \in \NN}[p_i]$.

\section{Medvedev degrees and basic properties}\label{section:Medvedev}

    \subsection{The lattice of Medvedev degrees}
    Here we provide a brief review of the lattice of Medvedev degrees $\M$. These degrees were introduced in \cite{medvedev_Degrees_1955} with the purpose of relating propositional formulas with mass problems. Important sources are the surveys \cite{hinman_survey_2012}, \cite{sorbi_Medvedev_1996}, see also \cite{lewis_Topological_2011}. 

    Intuitively, a mathematical problem $P$ has a higher Medvedev degree than a mathematical problem $Q$ if every solution to the problem $P$ can be used to compute a solution to the problem $Q$. This intuition can be made precise by defining a pre-order relation $\leq$ on subsets of $\{0,1\}^\NN$,  where each set is interpreted as the set of solutions of a fixed mathematical problem. Given two sets $P,Q\subset \{0,1\}^\NN$, we say that $P$ is \define{Medvedev reducible} to $Q$ when there is a partial computable function $\Psi$ on $\{0,1\}^\NN$, defined on $Q$ and such that $\Psi(Q)\subset P$. We abbreviate this relation by $P\leq Q$. We say that $P$ is \define{Medvedev equivalent} to $Q$ if both $P\leq Q$ and $Q\leq P$. The set $\M$ of Medvedev degrees is the set of equivalence classes of $\{0,1\}^\NN$ modulo Medvedev equivalence. Notice that Medvedev reduction induces a partial order on $\M$, and we will use the same symbol $\leq$ to compare Medvedev degrees. We denote the Medvedev degree of a set $P$ by $m(P)$. %This departs from the standard convention, but this will avoid any formal ambiguity as we need to extend Medvedev degrees beyond subsets of $\{0,1\}^\NN$.

    The partially ordered set $(\M,\leq)$  is indeed a distributive lattice: there is a minimal element denoted $0_\M$, called the \textit{trivial} degree, an operation $\wedge$ of infimum, an operation $\vee$ of supremum, and a maximal element that here will be denoted $1_\M$. Given two sets $P$ and $Q$, we have the following:
    \begin{itemize}
        \item $m(P)=0_\M$ when $P$ has a computable element. In intuitive terms, a mathematical problem is easy with this complexity measure when it has at least one computable solution.
        \item $m(P)=1_\M$ when $P$ is empty. In intuitive terms, a mathematical problem has maximal complexity with this complexity measure when it has no solution.
        \item $m(P)\vee m(Q)$ equals the degree of the set $\{x\in\{0,1\}^\NN : (x_{2n})_{n\in\NN}\in P\text{ and } (x_{2n+1})_{n\in\NN}\in Q \}$. In intuitive terms, $m(P)\vee m(Q)$ is the difficulty of solving both problems $P$ and $Q$ simultaneously. 
        \item $m(P)\wedge m(Q)$ equals the degree of the set $\{0x :x\in P \}\cup \{1x :x\in Q \}$. Here $0x$ stands for the concatenation of the finite word $0$ with the infinite sequence $x$.   In intuitive terms, $m(P)\wedge m(Q)$ is the difficulty of solving at least one of the problems $P$ and $Q$. 
    \end{itemize}

    A sublattice of $(\M,\leq)$ that will be relevant for us is that of $\Pi_1^0$ degrees. These are the degrees of nonempty $\Pi_1^0$ subsets of $\{0,1\}^\NN$. This lattice contains the minimal element $0_{\M}$, but also contains a maximal element which corresponds to the degree of the class of all PA-complete sets. See~\cite{Cenzer_1999_survey_in_Pi_10_degrees} for a survey on these results.
    %Notice that a $\Pi_1^0$ degree may also contain sets which are not $\Pi_1^0$.

    \subsection{Medvedev degrees as a dynamical invariant for shift spaces}

    The goal of this section is to define Medvedev degrees of subshifts of $A^G$, where $G$ is an arbitrary finitely generated group. Therefore we shall need to assign Medvedev degrees to sets not contained in $\{0,1\}^\NN$. This is relatively straightforward when $G$ has decidable word problem. Indeed, then $G$ admits a computable bijection $\nu\colon \NN\to G$, and this bijection yields a computable homeomorphism \[\delta \colon A^\NN\to A^G,x\mapsto  (x_{\nu^{-1}(g)})_{g\in G}.\]
    
    With this homeomorphism, we can define the Medvedev degree of a set $X\subset A^G$ by setting $m(X)=m(\delta^{-1}(X))$. This degree does not depend on the choice of $\nu$, see  \cite{carrasco-vargas_geometric_2023_} for details. Having defined Medvedev degrees of subshifts on groups with decidable word problem, we can extend this to the general case as follows. 
    
    \begin{definition}
        Let $G$ be a finitely generated group and let $X\subset A^G$ be a subshift. Let $\widehat{X}$ be the pullback subshift of $X$ to a free group cover of $G$. We define the Medvedev degree $m(X)$ as $m(\widehat{X})$.
    \end{definition}
       
    This definition is also independent of the chosen generating set of $G$ as all those pullbacks are recursively homeomorphic, the reader is referred to \cite[Section 3.3]{barbieri_carrasco_rojas_2024_effective} for a proof of this fact. We now prove some simple facts about $m$ as a dynamical invariant for subshifts. Given two subshifts $X,Y$, denote by $X\times Y$ their direct product as dynamical systems and by $X\sqcup Y$ their disjoint union.
    
    \begin{proposition}\label{prop:basic-properties}
        Let $G$ be a finitely generated group and let $X$, $Y$ be $G$-subshifts. 
        \begin{enumerate}
            \item If there is a topological morphism from $X$ to $Y$, then $m(X)\geq m(Y)$. In particular, the Medvedev degree of a subshift does not increase under factors and is a topological conjugacy invariant. 
            \item $m(X\times Y)=m(X)\vee m(Y)$.
            \item $m(X\sqcup Y)=m(X) \wedge m(Y)$.
        \end{enumerate}
    \end{proposition}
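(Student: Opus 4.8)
The three claims are structurally parallel: each reduces to exhibiting explicit computable reductions in both directions between the pullback subshifts. Since $m(X)$ is defined as $m(\widehat{X})$ and morphisms on free groups are automatically computable, I would first pass to the free group cover $F(S)$ and work entirely with $\widehat{X} \subseteq A^{F(S)}$, $\widehat{Y}\subseteq B^{F(S)}$, and the usual identification of $A^{F(S)}$ with a subset of a Cantor space via a fixed computable numbering of $F(S)$.

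For item (1), given a topological morphism $\phi\colon X\to Y$, the Curtis--Hedlund--Lyndon theorem gives a finite $F\subset G$ and a local rule $\Phi\colon A^F\to B$; lifting through $\rho\colon F(S)\to G$ produces a morphism $\widehat{\phi}\colon \widehat{X}\to\widehat{Y}$ with a local rule over a finite subset of $F(S)$. I would check that such a block map is a partial computable function on the ambient Cantor space restricting to $\widehat{X}$: to compute the first $n$ coordinates of $\widehat{\phi}(x)$ it suffices to read finitely many coordinates of $x$ and apply $\Phi$, which is a finite table. Hence $\widehat{Y}\le\widehat{X}$, i.e. $m(X)\ge m(Y)$. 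The ``in particular'' follows since a factor map is in particular a morphism, and a conjugacy gives morphisms both ways.

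For item (2), I would use that $\widehat{X\times Y}$ is recursively homeomorphic to $\widehat{X}\times\widehat{Y}$ (the pullback commutes with products, since $\rho^*$ is applied coordinatewise), and then observe that $\widehat X\times\widehat Y$, under a computable reindexing of $F(S)\sqcup F(S)$ into $F(S)$ realized by interleaving coordinates as in the definition of $\vee$, is exactly the join set $\{z : (\text{even part})\in\widehat X,\ (\text{odd part})\in\widehat Y\}$ up to recursive homeomorphism. The two projections are computable, giving $\widehat X\le\widehat X\times\widehat Y$ and $\widehat Y\le\widehat X\times\widehat Y$ hence $m(X)\vee m(Y)\le m(X\times Y)$; conversely, from any point of $\widehat X\times\widehat Y$ one computably extracts a pair, giving the reverse inequality. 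So $m(X\times Y)=m(X)\vee m(Y)$.

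For item (3), similarly $\widehat{X\sqcup Y}$ is recursively homeomorphic to $\widehat{X}\sqcup\widehat{Y}$; one must be slightly careful that the disjoint union of subshifts is again a subshift on the same group (take the alphabet $A\sqcup B$, which is fine as the two pieces are clopen and shift-invariant), so the pullback makes sense. Then $\widehat X\sqcup\widehat Y$ matches the set $\{0x : x\in\widehat X\}\cup\{1x : x\in\widehat Y\}$ defining $\wedge$ up to recursive homeomorphism, via a tag coordinate recording which copy a configuration lies in (this tag is computably readable from any single coordinate). From a point of the disjoint union one reads the tag and then has a point of $\widehat X$ or of $\widehat Y$, which gives a reduction witnessing $m(X\sqcup Y)\ge m(X)\wedge m(Y)$; conversely the canonical embeddings $X\hookrightarrow X\sqcup Y$ and $Y\hookrightarrow X\sqcup Y$ are morphisms so by item (1), $m(X\sqcup Y)\le m(X)$ and $m(X\sqcup Y)\le m(Y)$, hence $m(X\sqcup Y)\le m(X)\wedge m(Y)$. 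Combining gives equality.

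The only genuinely delicate point is bookkeeping: verifying that ``pullback to the free cover'' commutes (up to recursive homeomorphism) with $\times$ and $\sqcup$, and that the interleaving/tagging maps used to match the lattice operations $\vee,\wedge$ on $\M$ are recursive homeomorphisms of the relevant Cantor spaces. None of this is hard, but it is where all the care goes; everything else is a direct unwinding of the definitions of Medvedev reduction and of the lattice operations recalled above. I expect this to be the main (mild) obstacle, and I would handle it by fixing once and for all computable numberings and checking the coordinate-level descriptions of each map.
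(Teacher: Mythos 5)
Your proposal is correct and follows essentially the same route as the paper: pass to the pullback subshifts on the free cover, use Curtis--Hedlund--Lyndon to see that lifted morphisms are computable for item (1), and identify the pullbacks of $X\times Y$ and $X\sqcup Y$ with the standard interleaved join set and tagged meet set up to recursive homeomorphism for items (2) and (3). The only (cosmetic) difference is that you obtain the inequality $m(X\sqcup Y)\le m(X)\wedge m(Y)$ from item (1) via the canonical embeddings, whereas the paper reads both inequalities off the recursive homeomorphism directly.
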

    \begin{proof}
        Let $S$ be a finite generating set for $G$, and let $\widehat X\subset A^{F(S)}$, $\widehat Y\subset B^{F(S)}$ be the associated pullback subshifts in $F(S)$. For the first item note that if there is a topological morphism from $X$ to $Y$, then there is also a topological morphism from $\widehat X$ to $\widehat Y$ as subshifts on $F(S)$. By the Curtis-Hedlund-Lyndon theorem, there is a sliding block code $\phi\colon A^{F(S)}\to B^{F(S)}$ whose restriction to $\widehat X$ is a topological morphism to $\widehat Y$. This sliding block code is clearly a computable function, so we have $m(\widehat X)\geq m(\widehat Y)$. It follows that $m(X)\geq m(Y)$ by definition.  

        The above argument shows that the Medvedev degree of a subshift is a conjugacy invariant, thus we may identify $X\times Y$ with the subshift \[Z = \{(x(g),y(g))_{g\in G} : x,y\in X,Y\}.\]
        By our discussion above, $\widehat X$ and $\widehat Y$ are recursively homeomorphic to sets $P,Q\subset\{0,1\}^\NN$. Using this fact, it is straightforward that $\widehat  Z$ is recursively homeomorphic to $\{x\in\{0,1\}^\NN : (x_{2n})_{n\in\NN}\in P\text{ and } (x_{2n+1})_{n\in\NN}\in Q \}$. As the Medvedev degree of this set equals $m(P)\vee m(Q)$, it follows that $m(Z)=m(X)\vee m(Y)$.

        Again, by taking a topologically conjugate version of $X\sqcup Y$, we can assume that $A$ and $B$ are disjoint alphabets, and that $Z=X\cup Y$ is a subshift on alphabet $A\cup B$. By our discussion above, $\widehat X$ and $\widehat Y$ are recursively homeomorphic to sets $P,Q\subset\{0,1\}^\NN$. Using this fact, it is  straightforward that $\widehat Z$ is recursively homeomorphic to  $\{0x :x\in P \}\cup \{1x :x\in Q \}$. As the Medvedev degree of this set equals $m(P)\wedge m(Q)$, it follows that $m(Z)=m(X)\wedge m(Y)$. 
    \end{proof}

%    \subsection{Medvedev degrees of different classes of subshifts}
    %The goal of this article is to understand what values values can take $m(X)$, when $X$ is a subshift of finite type,  sofic, or effective.
    \begin{definition}
        Let $G$ be a finitely generated group. We denote \begin{enumerate}
            \item $\msft{G} = \{ m(X) : X \mbox{ is a nonempty $G$-SFT}\}.$
            \item $\msof{G} = \{ m(X) : X \mbox{ is a nonempty sofic $G$-subshift} \}.$
            \item $\meff{G} = \{ m(X) : X \mbox{ is a nonempty effective $G$-subshift}\}.$
        \end{enumerate}
        \end{definition}
    It is immediate that all three of the above classes are invariants of isomorphism, as subshifts in two isomorphic groups can be lifted through pullbacks to the same free group. Moreover, each of these three sets is a sublattice: it is closed under the operations $\vee$ and $\wedge$ and always contains $0_{\M}$. This follows from \Cref{prop:basic-properties} and the fact that the classes of SFTs, sofic, and effective subshifts are closed by direct products and disjoint unions.

    Next we will state the basic relations between these three sets. For this, we use that effective subshifts are $\Pi_1^0$ sets, and the following fact that follows from~\cite[Corollary 2.8]{aubrun_notion_2017} and~\cite[Corollary 7.7]{barbieri_carrasco_rojas_2024_effective}.
    
    \begin{proposition}\label{prop:sft-and-sofic-subshifts-are-effective}
            Let $G$ be a finitely generated and recursively presented group. Then SFTs and sofic subshifts on $G$ are effective. \end{proposition}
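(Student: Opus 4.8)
The plan is to reduce everything, as in the proof of \Cref{prop:basic-properties}, to showing that the pullback $\widehat X\subset A^{F(S)}$ of a $G$-SFT $X$ is $\Pi_1^0$; recall that $F(S)$ has decidable word problem, so that $A^{F(S)}$ is a Cantor space in the standard way and the notions $\Sigma_1^0$, $\Pi_1^0$ make sense there. I would write $\widehat X=C_1\cap C_2$, where $C_1$ is the set of $y\in A^{F(S)}$ that are constant on the fibers of $\rho$ (equivalently, that descend to a configuration on $G$) and $C_2$ is the set of $y$ whose descended configuration avoids the finite forbidden family $\mathcal F$ defining $X$. Both sets are closed, and the crux is to see that each is \emph{effectively} closed.

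For $C_1$, the complement is the union over all pairs $(w,w')$ of elements of $F(S)$ with $\rho(w)=\rho(w')$ of the clopen set $\{y:y(w)\neq y(w')\}$. The relation $\rho(w)=\rho(w')$ amounts to $w^{-1}w'\in\texttt{WP}_S(G)$, which is recursively enumerable precisely because $G$ is recursively presented; since each $\{y:y(w)\neq y(w')\}$ is clopen with a code computable from $(w,w')$, the complement of $C_1$ is a computable union of clopen sets over a recursively enumerable index set, hence $\Sigma_1^0$. For $C_2$, fix words $\overline f\in F(S)$ representing each element $f$ occurring in the finite supports of the finitely many patterns $p\in\mathcal F$. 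One checks that $y\in C_2$ iff for every $w\in F(S)$ and every $p\in\mathcal F$ there is some $f$ in the support of $p$ with $y(w^{-1}\overline f)\neq p(f)$, so that the complement of $C_2$ is the computable union over $F(S)\times\mathcal F$ of the cylinders $\{y:y(w^{-1}\overline f)=p(f)\text{ for all }f\}$, which is again $\Sigma_1^0$. Hence $\widehat X=C_1\cap C_2$ is $\Pi_1^0$, i.e.\ $X$ is effective. (For this half one can shortcut: for recursively presented $G$ the excerpt records that effectivity is equivalent to the existence of a Turing machine enumerating pattern codings $(c_i)$ with $X=A^G\setminus\bigcup_{i,g}g[c_i]$, and for an SFT one simply takes $(c_i)$ to be the finite list of codings obtained from $\mathcal F$.)

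For the sofic case, write $Y=\phi(X)$ with $X$ an SFT and $\phi$ a morphism, and lift $\phi$ through the pullbacks exactly as in the proof of \Cref{prop:basic-properties} to a computable sliding block code $\Phi\colon A^{F(S)}\to B^{F(S)}$ with $\Phi(\widehat X)=\widehat Y$. The key observation is that a basic cylinder $[q]\subset B^{F(S)}$ is disjoint from $\widehat Y$ if and only if the clopen set $\Phi^{-1}([q])$, whose code is computable from $q$, is disjoint from $\widehat X$; writing $\widehat X$ as $A^{F(S)}$ minus a computably enumerated union of cylinders, compactness of $\Phi^{-1}([q])$ shows that this disjointness, when it holds, is already witnessed by finitely many of those cylinders, hence is a semidecidable condition on $q$. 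Thus the set of cylinders disjoint from $\widehat Y$ is recursively enumerable, which exhibits $\widehat Y$ as $\Pi_1^0$ and $Y$ as effective.

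The genuine subtlety is in the sofic step: the computable continuous image of a $\Pi_1^0$ class need not be $\Pi_1^0$, so one really must use that subshifts are compact --- a clopen set disjoint from the closed set $\widehat X$ is disjoint from one of its finite clopen approximations. The SFT step is mostly bookkeeping, but it is worth flagging that the sole place the hypothesis ``recursively presented'' enters is in making $C_1$ effectively closed; without it $C_1$ is merely closed, and indeed SFTs on non-recursively-presented groups can fail to be effective.
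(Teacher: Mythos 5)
Your proof is correct. Note that the paper offers no proof of this proposition at all: it is quoted as following from the two references cited just before the statement, so there is nothing in-paper to compare against line by line. Your SFT half is the standard argument those references use: write $\widehat X$ as the intersection of the pullback of the full shift (whose complement is enumerated using the recursively enumerable word problem --- this is the only place recursive presentability enters, as you say) with the $\Pi_1^0$ constraint coming from the finite forbidden family, whose index set $F(S)\times\mathcal F$ is decidable. Your sofic half is also sound, and the subtlety you flag is exactly the right one: the computable image of a $\Pi_1^0$ class need not be $\Pi_1^0$ in general, but it is when the ambient space is recursively compact, which is precisely what your argument about covering the clopen set $\Phi^{-1}([q])$ by finitely many enumerated cylinders establishes; this is the same fact the paper itself invokes, again without proof, in the proof of \Cref{prop:beyondtimeandspace}. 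Your closing remark is accurate as well: for $|A|\geq 2$ the argument for $C_1$ reverses --- recursive compactness makes disjointness of a clopen set from a $\Pi_1^0$ set semidecidable, so effectivity of the full shift already forces the word problem to be recursively enumerable --- hence the hypothesis is genuinely necessary, consistent with the paper's discussion of ``effectively closed by patterns'' versus effective.
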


    It is clear that for any group $G$ we have $\msft{G}\subset \msof{G}$. \Cref{prop:sft-and-sofic-subshifts-are-effective} says that the relation $\msof{G}\subset \meff{G}$ holds for recursively presented groups. We remark that some non-recursively presented groups (such as finitely generated simple groups whose word problem is not in $\Pi_2^0$, see~\cite[Proposition 2.10]{Barbieri_Sablik_Salo_2021}) only admit the trivial effective action and thus $\meff{G}= \{ 0_{\M}\}$, thus the inclusion $\msof{G}\subset \meff{G}$ does not necessarily hold.

    \begin{observation}\label{prop:msft-and-msoff-are-contained-in-Pi-1-degrees}
        Let $G$ be a recursively presented group. Then all three classes $\msft{G}$, $\msof{G}$ and $\meff{G}$ are contained in the class of $\Pi_1^0$ degrees. 
    \end{observation}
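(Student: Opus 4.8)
The plan is to deduce all three inclusions from a single one. Since $G$ is recursively presented, \Cref{prop:sft-and-sofic-subshifts-are-effective} gives $\msft{G}\subseteq\msof{G}\subseteq\meff{G}$, so it suffices to show that $\meff{G}$ is contained in the class of $\Pi_1^0$ degrees; that is, that every nonempty effective $G$-subshift has Medvedev degree equal to $m(Q)$ for some nonempty $\Pi_1^0$ set $Q\subseteq\{0,1\}^\NN$.

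First I would unwind the definitions. Fix a finite generating set $S$. By the definition of effective subshift, for a nonempty effective $X\subseteq A^G$ the pullback $\widehat X=\rho^*(X)\subseteq A^{F(S)}$ is a nonempty $\Pi_1^0$ set, and $m(X)=m(\widehat X)$. The free group $F(S)$ has decidable word problem, so it carries a computable numbering, which induces a computable homeomorphism $\delta\colon A^\NN\to A^{F(S)}$; then $P_0:=\delta^{-1}(\widehat X)$ is a nonempty $\Pi_1^0$ subset of $A^\NN$ with $m(P_0)=m(\widehat X)=m(X)$. It thus remains to pass from the alphabet $A$ to $\{0,1\}$.

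For this last step I would use an explicit block code. Identifying $A$ with $\{0,1,\dots,k-1\}$, where $k=|A|\geq 2$, let $e\colon A^\NN\to\{0,1\}^\NN$ be given by $e(x)=0^{x_0}1\,0^{x_1}1\,0^{x_2}1\cdots$. This map is injective, continuous and computable, with computable inverse on its image, and its image is exactly the set of binary sequences containing no occurrence of the factor $0^{k}$, which is an effectively closed ($\Pi_1^0$) set. A short computation then shows that $e(P_0)$ is a nonempty $\Pi_1^0$ subset of $\{0,1\}^\NN$: its complement is the union of the (effectively open) complement of the image of $e$ with an effectively open set $V\subseteq\{0,1\}^\NN$ witnessing that $A^\NN\setminus P_0$ maps to a relatively effectively open subset of the image. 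Since $e$ restricts to a recursive homeomorphism $P_0\to e(P_0)$, we obtain $m(e(P_0))=m(P_0)=m(X)$, so $e(P_0)$ is the desired nonempty $\Pi_1^0$ subset of $\{0,1\}^\NN$.

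I do not anticipate a genuine obstacle here: the whole argument is bookkeeping with the definition of effectiveness and with recursive homeomorphisms, and the only mildly technical point is the verification that the block code $e$ sends $\Pi_1^0$ sets to $\Pi_1^0$ sets while preserving the Medvedev degree. Alternatively, one can bypass $e$ entirely and simply invoke the standard fact from the theory of $\Pi_1^0$ classes that, for every finite alphabet $A$ with $|A|\geq 2$, the nonempty $\Pi_1^0$ subsets of $A^\NN$ realize precisely the same Medvedev degrees as the nonempty $\Pi_1^0$ subsets of $\{0,1\}^\NN$.
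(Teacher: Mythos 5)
Your argument is correct and is essentially the one the paper intends: the Observation is stated without proof precisely because it follows from the chain $\msft{G}\subseteq\msof{G}\subseteq\meff{G}$ (via \Cref{prop:sft-and-sofic-subshifts-are-effective}) together with the fact that a nonempty effective subshift has, by definition, a nonempty $\Pi_1^0$ pullback of the same Medvedev degree. Your explicit block code $e$ just fills in the routine identification of $\Pi_1^0$ subsets of $A^\NN$ with $\Pi_1^0$ subsets of $\{0,1\}^\NN$ up to Medvedev equivalence, which the paper leaves implicit.
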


        %We also use the shorthand $\mall{G}$ to denote any of the three classes above. In any relation where $\mall{G}$ appears, every instance applies to the same class. For instance, the expression ``$\mall{G}\subset \mall{H}$'' should be read as: ``$\msft{G}\subset \msft{H}$, $\msof{G}\subset \msof{H}$ and $\meff{G}\subset \meff{H}$''.
        
\section{Transference results}\label{section:transference}

Next we shall study how the space of Medvedev degrees of subshifts on groups behaves with respect to basic relations in group theory.
%In order to do this, we will introduce a few elementary constructions and study how their Medvedev degrees relate.

\subsection{Elementary constructions}

Let $\rho \colon G \to H$ be a group homomorphism. Then $\rho$ induces a map $\rho^* \colon A^H \to A^G$ through \[ \rho^*(x)(g) = x(\rho(g)) \mbox{ for every } x\in A^H, g\in G. \]
It is clear from the above definition that for any subshift $X\subset A^H$, then $\rho^*(X)\subset A^G$ is also a subshift.

    \begin{lemma}\label{lema:medvedev_degree_of_pullback}
        For any finitely generated group $G$, epimorphism $\rho \colon G \to H$ and subshift $X\subset A^H$, we have \[ m(X) = m(\rho^*(X)).  \]
    \end{lemma}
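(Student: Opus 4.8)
The plan is to reduce the statement to the case of the pullback to the free group, where the content has already been isolated in the definitions. Fix a finite generating set $S$ for $G$. Since $\rho$ is an epimorphism, $H$ is finitely generated by $\rho(S)$; however $\rho(S)$ may fail to be symmetric or may contain the identity, so I would first pass to a genuine finite symmetric generating set $T$ of $H$ and record that the recursive-homeomorphism class of any pullback does not depend on this choice (this is the fact cited after the definition of the pullback subshift, from \cite[Section 3.3]{barbieri_carrasco_rojas_2024_effective}). Thus $m(X)=m(\widehat X)$ where $\widehat X\subset A^{F(T)}$ is the pullback of $X$ along the canonical map $F(T)\to H$, and $m(\rho^*(X))=m(\widehat{\rho^*(X)})$ where $\widehat{\rho^*(X)}\subset A^{F(S)}$ is the pullback of $\rho^*(X)$ along $F(S)\to G$.

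Next I would exhibit the comparison map between the two free-group pullbacks. The composite $F(S)\to G\xrightarrow{\rho} H$ factors through $F(T)$ up to the usual identifications: more concretely, choose for each $s\in S$ a word $\widehat\rho(s)\in T^*$ representing $\rho(\underline s)\in H$; this extends to a monoid (indeed group) homomorphism $r\colon F(S)\to F(T)$, which is computable, and which satisfies $\pi_H\circ r = \rho\circ\pi_G$ where $\pi_G\colon F(S)\to G$ and $\pi_H\colon F(T)\to H$ are the canonical projections. Pulling configurations back along $r$ gives a continuous map $r^*\colon A^{F(T)}\to A^{F(S)}$, and one checks directly from the definitions that $r^*(\widehat X)=\widehat{\rho^*(X)}$: for $x\in X$ and $w\in F(S)$,
\[
r^*(\widehat X \ni \rho^*{}^{\widehat{}} )\ \text{—}\qquad r^*(\widehat x)(w)=\widehat x(r(w))=x(\pi_H(r(w)))=x(\rho(\pi_G(w)))=\rho^*(x)(\pi_G(w)),
\]
which is exactly the value of the pullback of $\rho^*(x)$ at $w$. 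So $r^*$ restricts to a continuous surjection $\widehat X\to\widehat{\rho^*(X)}$, and since $r^*$ is a computable map (it is a sliding-block-type recoding determined by a computable substitution on generators), this already yields $m(\widehat{\rho^*(X)})\le m(\widehat X)$, i.e. $m(\rho^*(X))\le m(X)$.

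For the reverse inequality I want a computable map in the other direction. The key point is that $r^*$ is actually injective on $\widehat X$, hence a homeomorphism onto $\widehat{\rho^*(X)}$, and its inverse is computable. Injectivity: if $r^*(\widehat x)=r^*(\widehat y)$ then $x(\rho(\pi_G(w)))=y(\rho(\pi_G(w)))$ for all $w\in F(S)$; since $\pi_G$ is onto $G$ and $\rho$ is onto $H$, the element $\rho(\pi_G(w))$ ranges over all of $H$, so $x=y$ on $H$, whence $\widehat x=\widehat y$. For computability of the inverse: to recover the value $\widehat x(v)$ at a given $v\in F(T)$ from $\widehat{\rho^*(x)}=r^*(\widehat x)$, note $\widehat x(v)=x(\pi_H(v))$, and it suffices to find any $w\in F(S)$ with $\rho(\pi_G(w))=\pi_H(v)$; then $\widehat x(v)=\rho^*(x)(\pi_G(w))=\widehat{\rho^*(x)}(w)$. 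Such a $w$ exists by surjectivity and can be found effectively — here I would invoke that the relevant morphisms between finitely generated groups are computable (the remark in the preliminaries that every homomorphism between finitely generated groups is computable), applied to a computable section-at-the-level-of-words of $\rho$ composed with a word-length search. Assembling: $r^*$ is a recursive homeomorphism $\widehat X\to\widehat{\rho^*(X)}$, so $m(\widehat X)=m(\widehat{\rho^*(X)})$ and therefore $m(X)=m(\rho^*(X))$.

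The main obstacle is the last computability argument: one must be careful that "find $w\in F(S)$ with $\rho(\pi_G(w))=\pi_H(v)$" is genuinely effective without assuming anything about the word problem of $G$ or $H$. The resolution is that we only need the existence of *some* preimage word, and a preimage can be produced by a fixed computable rule chosen once and for all (e.g. from a computable set-theoretic section of $\rho$ at the word level, which exists because we may simply fix, for each generator of $H$, a preimage word in $S^*$, and this determines a computable map $T^*\to S^*$ lifting $\rho^{-1}$ on the nose as words); no decision procedure is invoked. I would also double-check the bookkeeping that justifies replacing $\rho(S)$ by a symmetric generating set $T$ and that the two notions of pullback (to $F(S)$ versus to $F(T)$) match up under $r^*$, since this is where an inattentive argument could hide a gap.
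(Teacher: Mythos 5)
Your proof is correct and follows essentially the same route as the paper's: one direction via the computable substitution $s\mapsto\bar\rho(s)$ on generators, and the other via a fixed choice of preimage words $w_u\in S^*$ for each generator $u$ of $H$, extended letter by letter. The only difference is cosmetic — you package the two maps as a recursive homeomorphism (the injectivity observation is true but not needed, since a Medvedev reduction only requires a computable map landing in the target set), and the garbled display in the middle is a typo rather than a gap.
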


    \begin{proof}
        Note first that as $G$ is finitely generated and $\rho$ is an epimorphism, then $H$ is also finitely generated. Fix finite symmetric generating sets $S,U$ for $G$ and $H$ respectively. Let $\widehat{X}$ be the pullback of $X$ to $F(U)$. For each $s \in S$, let $\bar{\rho}(s) \in U^*$ be a word which represents $\rho(s)$ in $H$. For $\widehat{x} \in \widehat{X}$, we can define $\widehat{y}\colon F(S) \to A$ by \[\widehat{y}(s_1\dots s_n) = \widehat{x}(\bar{\rho}(s_1) \dots \bar{\rho}(s_n)).\]
        It is clear that $\widehat{y} \in \widehat{\rho^*(X)}$ and that the map which sends $\widehat{x}$ to $\widehat{y}$ is computable. It follows that $m(\rho^*(X)) \leq m(X)$. 

        Conversely, for every $u \in U$, fix $w_u \in S^*$ such that $\rho(\underline{w_u}) = u$. Given as input $\widehat{y} \in \widehat{\rho^*(X)}$ we define $\widehat{x}\colon F(U)\to A$ by \[\widehat{x}(u_1\dots u_n) = \widehat{y}(w_{u_1}\dots w_{u_n}).  \]
        This map again is clearly computable and $x \in \widehat{X}$. It follows that $m(X) \leq m(\rho^*(X))$.
    \end{proof}

    \begin{definition}
        Let $H \leqslant G$ be a subgroup and $X\subset A^H$ a subshift. The \define{free extension} of $X$ to $G$ is the subshift \[  \widetilde{X} = \{ x \in A^{G} : \mbox{ for every } g \in G, (x(gh))_{h \in H} \in X\}.   \]
    \end{definition}

    It is clear from the definition that any set of forbidden patterns that defines $X$ in $H$ also defines $\widetilde{X}$ in $G$. In particular it follows that if $X$ is an SFT (respectively sofic, effectively closed by patterns) then so is $\widetilde{X}$.
    
\begin{lemma}\label{free-extensions}
    Let $H\leqslant G$ be finitely generated groups and let $X\subset A^H$ be a subshift. Let $\widetilde{X}$ be the free extension of $X$ to $G$. Then $m(X) \leq m(\widetilde{X})$. Furthermore, if either
    \begin{enumerate}
        \item[(1)] $G$ is recursively presented and $H$ has decidable membership problem in $G$,
        \item[(2)] $H$ has finite index in $G$,
    \end{enumerate}then $m(X) = m(\widetilde{X})$. 
\end{lemma}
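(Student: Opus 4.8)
The inequality $m(X)\le m(\widetilde X)$ holds with no hypothesis, and the content of the lemma is the reverse inequality under (1) or (2). Throughout I use that $m(X)$ and $m(\widetilde X)$ do not depend on the chosen finite generating sets of $H$ and $G$, so I pick these conveniently for each inequality. For $m(X)\le m(\widetilde X)$, choose finite symmetric generating sets $T$ of $H$ and $S$ of $G$ with $T\subseteq S$; then $F(T)$ sits inside $F(S)$ as the reduced words over $T$, computably. The restriction map $A^{F(S)}\to A^{F(T)}$, $y\mapsto y|_{F(T)}$, is computable, and for $\tilde x\in\widetilde X$ it sends the pullback of $\tilde x$ to the pullback of $\tilde x|_H$; this lies in $\widehat X$ because $\tilde x|_H=(\tilde x(h))_{h\in H}\in X$ by the definition of the free extension applied with $g=1_G$. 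Hence restriction carries $\widehat{\widetilde X}$ into $\widehat X$, so $m(X)\le m(\widetilde X)$.

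For $m(\widetilde X)\le m(X)$, the geometric idea is to build from $x\in X$ the configuration $\tilde x\in\widetilde X$ that carries a translated copy of $x$ on each left coset of $H$: fixing representatives $G=\bigsqcup_i g_iH$ with $g_1=1_G$, set $\tilde x(g)=x(g_i^{-1}g)$ whenever $g\in g_iH$; then $(\tilde x(gh))_{h\in H}$ is a translate of $x$ for every $g$, so $\tilde x\in\widetilde X$ by shift-invariance of $X$. To make $x\mapsto\tilde x$ a computable map $\widehat X\to\widehat{\widetilde X}$ one must, given a reduced word $w\in F(S)$: (a) identify the coset $\underline{w}H$ and output a word $v$ representing one of its elements $g_i$, so that $z:=v^{-1}w$ represents some $h\in H$; and (b) rewrite $z$ as a word $u$ over the generators of $H$ with $\underline{u}=h$, and output $\widehat x(u)$. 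One checks that the value depends only on $\underline{w}$ (through its coset) and that the resulting configuration equals the pullback of $\tilde x$, hence lies in $\widehat{\widetilde X}$; so it remains to carry out (a) and (b) effectively.

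In case (1), decidability of the membership problem makes coset equality $\underline{w'}H=\underline{w}H$ decidable (test whether $(w')^{-1}w\in\texttt{WP}_S(G,H)$), so one may take $v$ to be the length--lexicographically least word in the coset of $w$: this is computable from $w$, since the search ranges only over words $\le w$, and depends only on $\underline{w}H$. For (b), recursive presentation of $G$ makes $\texttt{WP}_S(G)$ recursively enumerable, so we dovetail over all words $u$ over the generators of $H$ and halt at the first whose image in $G$ equals $\underline{z}$; such a $u$ exists because $h\in H=\langle T\rangle$, and any such $u$ gives the same value $\widehat x(u)=x(h)$. In case (2), let $n=[G:H]$. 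The left action of $G$ on its $n$ left cosets is a homomorphism $G\to\operatorname{Sym}(n)$, and composing it with $F(S)\to G$ yields a homomorphism $F(S)\to\operatorname{Sym}(n)$ that is computable, being determined by the images of finitely many generators; this pinpoints the coset of $\underline{w}$, and for $v$ we use a fixed finite list of transversal words $v_1,\dots,v_n$. For (b) we take the generating set of $H$ to be the associated finite Reidemeister--Schreier set and apply the Reidemeister--Schreier rewriting process, which is purely syntactic, hence computable, and sends any word over $S$ representing an element of $H$ to a word over that set representing the same element, with no hypothesis on the word problem of $G$.

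The main obstacle is precisely step (b): converting a word in the generators of $G$ that happens to land in $H$ into a word in a fixed generating set of $H$. In case (1) this is exactly where recursive presentation is essential, via an unbounded search through $\texttt{WP}_S(G)$; in case (2) one must recognize that finite index alone suffices, via Reidemeister--Schreier, with no decidability assumption. The remaining bookkeeping — that the map so defined is well defined and that its image lies in $\widehat{\widetilde X}$ — is routine once the coset-representative assignment is seen to factor through the coset.
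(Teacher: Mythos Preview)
Your proof is correct and follows essentially the same route as the paper. For $m(X)\le m(\widetilde X)$ and for case~(1), your argument is the paper's: restrict for the easy direction, and for the other direction pick coset representatives via the membership oracle and then rewrite into the generators of $H$ by an r.e.\ search through $\texttt{WP}_S(G)$. For case~(2), the paper builds the coset lookup table directly from a finite transversal (storing only the finitely many products $tk$ and $st$), whereas you invoke the Reidemeister--Schreier rewriting process; these are the same device presented at different levels of abstraction, and neither needs any decidability hypothesis on $G$.
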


\begin{proof}
   Let $S,U\subset G$ be two finite symmetric sets with $S\subset U$ such that $S$ generates $H$ and $U$ generates $G$. As $F(S)\subset F(U)$, the map which on input $\widetilde{x} \in \widehat{\widetilde{X}}$ returns its restriction to $F(S)$ is computable and yields an element $x\in \widehat{X}$. Thus $m(X) \leq m(\widetilde{X})$.

    Conversely, suppose (1) holds. Consider the algorithm which on input $x\colon F(S)\to A$ lists all elements of $F(U)$ in lexicographical order $(u_i)_{i \in \NN}$. When it lists $u_n$, it checks in increasing order for every $k < n$ (using the algorithm for the membership problem of $H$ in $G$) whether $\underline{u_k^{-1}u_n} \in H$. If it is the case for some $k$, then it computes (using that $G$ is recursively presented) a word $w_n \in F(S)$ such that $\underline{u_k^{-1}u_nw_n^{-1}} = 1_G$ and sets $\widetilde{x}(u_n) = x(w_n)$; Otherwise, if $\underline{u_k^{-1}u_n} \notin H$ for every $k < n$, it sets $\widetilde{x}(u_n)=x(1_{F(S)})$. This yields a map $\widetilde{x} \colon F(U) \to A$ which satisfies that $\widetilde{x} \in \widehat{\widetilde{X}}$ if and only if $x \in \widehat{X}$, thus $m(\widetilde{X}) \leq m(X)$.

    Finally, suppose (2) holds. Let $T$ be a finite set such that $TH=G$. Without loss of generality, we may suppose $T\subset U$ and that each $u \in U$ is written as $u = t_uw_u$ with $t_u \in T$ and $w_u \in S^*$. Given such a set $U$, there is a computable map which on input $v \in U^*$ returns $t_v \in T$ and $u_v \in S^*$ such that $\underline{v} = \underline{t_v u_v}$ (to do this, we only need to store the values of all multiplications $tk$ and $s t$ for $s \in S, t,k \in T$, of which there are finitely many). Using this map, it follows that the function which on input $x\colon F(S)\to A$ returns $\widetilde{x} \colon F(U) \to A$ given by $\widetilde{x}(v)=x(u_v)$ is computable and $\widetilde{x} \in \widehat{\widetilde{X}}$ if and only if $x \in \widehat{X}$. It follows that $m(\widetilde{X}) \leq m(X)$.
\end{proof}

\begin{corollary}
    Let $H \leqslant G$ be finitely generated groups. If there exists an $H$-SFT with nontrivial Medvedev degree, then there exists a $G$-SFT with nontrivial Medvedev degree.
\end{corollary}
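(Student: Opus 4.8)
The plan is to obtain this as an immediate consequence of \Cref{free-extensions} applied to the free extension construction. Suppose $X\subset A^H$ is an $H$-SFT with $m(X)\neq 0_{\M}$. First I would pass to the free extension $\widetilde{X}\subset A^{G}$ of $X$ to $G$, as in the definition preceding \Cref{free-extensions}. As already observed there, any finite set of forbidden patterns defining $X$ over $H$ also defines $\widetilde{X}$ over $G$; since $X$ is an SFT it admits such a finite set, and hence $\widetilde{X}$ is a $G$-SFT. This uses only that $H$ and $G$ are finitely generated, which is part of the hypothesis.

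Next I would invoke the inequality $m(X)\leq m(\widetilde{X})$ from the first part of \Cref{free-extensions} (no extra assumption on the membership problem or index is needed, only this inequality). Combining this with the hypothesis $m(X)\neq 0_{\M}$ and the fact that $0_{\M}$ is the least element of the lattice $\M$, we conclude $m(\widetilde{X})\neq 0_{\M}$: indeed if $m(\widetilde{X})=0_{\M}$ then $m(X)\leq 0_{\M}$ forces $m(X)=0_{\M}$, a contradiction. Thus $\widetilde{X}$ is the desired $G$-SFT with nontrivial Medvedev degree.

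There is essentially no obstacle here; the only points requiring a word of justification are the two facts borrowed from the preceding material, namely that the free extension of an SFT is an SFT and that Medvedev degree does not decrease under free extension. Both are already in hand, so the corollary is a one-line deduction.
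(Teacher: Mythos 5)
Your proof is correct and is exactly the argument the paper intends: the corollary is stated as an immediate consequence of the first (unconditional) inequality $m(X)\leq m(\widetilde X)$ in \Cref{free-extensions}, together with the observation preceding that lemma that the free extension of an SFT is an SFT. Nothing further is needed.
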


\begin{corollary}\label{cor:subgroups_decidable_membership}
    Let $H\leqslant G$ be finitely generated, recursively presented groups and suppose $H$ has decidable membership problem in $G$, then $\msft{H} \subset \msft{G}$ and $\msof{H} \subset \msof{G}$. 
\end{corollary}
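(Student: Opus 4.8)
\section*{Proof proposal for \Cref{cor:subgroups_decidable_membership}}

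The plan is to read this off from \Cref{free-extensions} together with the observation, recorded just before it, that free extensions preserve the class of SFTs and the class of sofic subshifts. Let $d\in\msft{H}$ and fix a nonempty $H$-SFT $X\subset A^H$ with $m(X)=d$. Let $\widetilde X\subset A^G$ be the free extension of $X$ to $G$. Since $X$ is an SFT, so is $\widetilde X$. The hypotheses of the corollary say precisely that condition (1) of \Cref{free-extensions} holds: $G$ is recursively presented and $H$ has decidable membership problem in $G$. Hence $m(\widetilde X)=m(X)=d$. Because $X\neq\emptyset$ we have $m(X)\neq 1_\M$, so $m(\widetilde X)\neq 1_\M$, which forces $\widetilde X\neq\emptyset$; thus $\widetilde X$ is a nonempty $G$-SFT realizing $d$, and $d\in\msft{G}$. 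This gives $\msft{H}\subset\msft{G}$.

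For the sofic inclusion the argument is identical: if $d\in\msof{H}$ is realized by a nonempty sofic $H$-subshift $X$, then $\widetilde X$ is a sofic $G$-subshift (free extensions of sofic subshifts are sofic), it is nonempty by the same degree argument, and $m(\widetilde X)=m(X)=d$ by \Cref{free-extensions}(1); hence $d\in\msof{G}$.

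I do not expect a real obstacle here, since all the content has already been isolated in \Cref{free-extensions}. The only points that deserve an explicit word are that the SFT/sofic property transfers to $\widetilde X$ (noted before \Cref{free-extensions}) and that nonemptiness of $\widetilde X$ is automatic from the equality $m(\widetilde X)=m(X)$, as a subshift has Medvedev degree $1_\M$ exactly when it is empty. If one prefers an explicit witness instead, pick a transversal $T$ of the left cosets $G/H$, fix $z\in X$, and set $x(th)=z(h)$ for $t\in T$, $h\in H$; then for $g=th_0$ with $h_0\in H$ one computes $(x(gh))_{h\in H}=h_0^{-1}z\in X$ by $H$-invariance of $X$, so $x\in\widetilde X$.
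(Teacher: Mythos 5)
Your proof is correct and is exactly the argument the paper intends: the corollary is stated without proof precisely because it is the immediate combination of \Cref{free-extensions}(1) with the remark preceding that lemma that free extensions preserve the SFT and sofic classes. Your extra care about nonemptiness of $\widetilde{X}$ (via the degree equality, or via the explicit coset-wise witness) is a reasonable detail to make explicit, and both justifications are valid.
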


\subsection{Commensurability and quotients}

    Here we prove that $\msft{G}$ and $\msof{G}$ are commensurability invariants. For this we apply a construction of Carroll and Penland~\cite{carroll_Periodic_2015}. Let $H\leqslant G$ be finitely generated groups such that $H$ has finite index in $G$, and let $T=\{t_1=1_G,t_2,\dots,t_r\}\subset G$ be a finite set such that $HT=G$. Given an alphabet $A$, we let $B=A^T$ and $\Psi\colon A^G\to B^H$ be the map given by $\Psi(x)(h)=(h^{-1}x)|_T $. The image of a $G$-subshift $X$ by $\Psi$ is an $H$-subshift. Moreover, $X$ is a $G$-SFT (respectively sofic subshift) if and only if $\Psi(X)$ is an $H$-SFT (respectively sofic $H$-subshift), see~\cite[Section 3.1]{carroll_Periodic_2015} for the proof.

    \begin{lemma}\label{higher-block-preserves-m}
        Given a subshift $X\subset A^G$, $m(X)=m(\Psi(X))$.
    \end{lemma}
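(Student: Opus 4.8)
The plan is to show that the map $\Psi$ (and its inverse on the image) induces a recursive homeomorphism between the pullback subshifts $\widehat X \subset A^{F(U)}$ and $\widehat{\Psi(X)} \subset B^{F(S)}$, where $S$ generates $H$, $U \supset S$ generates $G$, and $B = A^T$. Once this is established, $m(X) = m(\widehat X) = m(\widehat{\Psi(X)}) = m(\Psi(X))$ follows immediately from the definition of the Medvedev degree of a subshift and the fact that recursive homeomorphisms preserve Medvedev degrees.

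First I would fix compatible generating sets: take $S \subset U$ finite symmetric sets so that $S$ generates $H$ and $U$ generates $G$, and set up the coset representatives $T = \{t_1 = 1_G, \dots, t_r\}$ with $HT = G$. I would arrange (as in the proof of \Cref{free-extensions}, case (2)) that each $u \in U$ can be written computably as $u = w_u t_u$ or $t_u w_u$ with $w_u \in S^*$ and $t_u \in T$, using the finitely many multiplication values between elements of $S$ and $T$. The key observation is that the pullback $\widehat X$ records $x$ on all of $F(U)$ via $\rho\colon F(U) \to G$, while $\widehat{\Psi(X)}$ records $\Psi(x)(h) = (h^{-1}x)|_T$ indexed by $F(S)$; so a symbol of $\widehat{\Psi(X)}$ at a word $v \in F(S)$ is exactly the tuple $(x(\underline{v}\,t_1), \dots, x(\underline{v}\,t_r))$, each entry of which is a symbol of $\widehat X$ at an explicitly computable word in $F(U)$. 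This gives a computable map $\widehat X \to \widehat{\Psi(X)}$ directly (it is even a sliding block code composed with the computable reindexing).

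For the converse direction, given $y \in \widehat{\Psi(X)}$ I need to reconstruct $\widehat x \in \widehat X$. For a word $u_1 \cdots u_n \in F(U)$ I compute its image $g \in G$ in the form $g = \underline{v}\, t_j$ with $v \in S^*$ and $t_j \in T$ — this is done by the computable normal-form procedure above — and then read off $\widehat x(u_1\cdots u_n) = y(v)_j$, the $j$-th coordinate of the $B$-symbol $y(v)$. One must check this is well-defined (independent of the choice of representatives, which holds because the value only depends on $g \in G$ and $\Psi(x)$ genuinely determines $x$), that it lands in $\widehat X$ iff $y \in \widehat{\Psi(X)}$ (which is the content of the Carroll--Penland correspondence cited before the lemma), and that it is computable. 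Both maps being computable and mutually inverse, they form a recursive homeomorphism.

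I do not expect a serious obstacle here; this is essentially a bookkeeping argument. The only mild subtlety is making the translation between ``coordinate $j$ of the $A^T$-symbol at coset representative $\underline{v}$'' and ``the $A$-symbol at the group element $\underline{v}\,t_j$'' fully explicit and checking that the reindexing is computable on the free group — but this is exactly the type of computation already carried out in case (2) of \Cref{free-extensions}, so I would reuse that machinery. The fact that $\Psi$ and its inverse respect the shift actions (intertwining $G \curvearrowright X$ with $H \curvearrowright \Psi(X)$) is already recorded in the discussion preceding the lemma, so I only need the computability layer on top of it.
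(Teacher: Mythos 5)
Your proposal is correct and follows essentially the same route as the paper: a computable map $\widehat X \to \widehat{\Psi(X)}$ reading off the tuple $(x(\underline{v}t_1),\dots,x(\underline{v}t_r))$, and a converse map using the finite-state normal form $\underline{v}=\underline{u_v t_v}$ from the finite-index case of \Cref{free-extensions}. The only cosmetic difference is that you package the two computable maps as a recursive homeomorphism, whereas the paper only needs a computable reduction in each direction, which is all that Medvedev equivalence requires.
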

    \begin{proof}
        Let $U,S \subset G$ be symmetric finite generating sets with $U\subset S$ for $H$ and $G$ respectively. Fix a set of elements $\{h_1,\dots,h_r\}$ in $F(S)$ such that $h_i$ corresponds to $t_i$ in $G$.
        
        For the inequality $m(X)\geq m(\Psi(X))$, we define a computable function $\widehat \Psi\colon A^{F(S)}\to B^{F(U)}$ that represents $\Psi$ on the pullback subshifts. Given  $x\in A^{F(S)}$, $\widehat \Psi (x)\colon F(U)\to B$ is defined by $\widehat{\Psi}(x)(w)(t_i) = x(w h_i)$ for $w\in F(U)$ and $t_i \in T$. It is clear that $\widehat \Psi$ is computable and that every element in $\widehat X$ is mapped by $\widehat \Psi$ to $\widehat{\Psi(X)}$, thus we obtain $m(X)\geq m(\Psi(X))$. 

        For the remaining inequality, we proceed analogously as in~\Cref{free-extensions}. There is an algorithm which on input $v \in S^*$, returns $t_v \in T$ and $u_v\in U^*$ such that $\underline{v} = \underline{u_v t_v}$. Consider the map which on input an element $x\in \widehat{\Psi(X)}$, returns $y\in \widehat X$ defined as follows: for $v\in F(S)$ set $y(v)=x(u_v)(t_v)$. This defines a computable function from $\widehat {\Psi(X)}$ to $\widehat X$, and proves the desired inequality. 
    \end{proof}

Recall that two groups $G$ and $H$ are said to be \define{commensurable} when there exist finite index subgroups $G' \leqslant G$ and $H' \leqslant H$ which are isomorphic. 

    \begin{proposition}\label{prop:commensurable_same_degrees}
        Let $G$ and $H$ be two commensurable finitely generated groups. Then $\msft{G} = \msft{H}$ and $\msof{G} = \msof{H}$.
    \end{proposition}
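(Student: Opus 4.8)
The plan is to reduce the statement to the two lemmas already established: \Cref{free-extensions} (specifically case (2), finite index) and \Cref{higher-block-preserves-m}, which together show that passing to a finite-index subgroup, in either direction, does not change the class of Medvedev degrees of SFTs or of sofic subshifts. The key observation is that commensurability is generated by two elementary moves: passing from a group to a finite-index subgroup, and passing from a finite-index subgroup back up to an overgroup; and Medvedev degree classes are invariant under group isomorphism (stated right after the definition of $\msft{G}$, $\msof{G}$).

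First I would establish the finite-index invariance as a standalone claim: if $H \leqslant G$ has finite index, then $\msft{H} = \msft{G}$ and $\msof{H} = \msof{G}$. For the inclusion $\msft{H} \subset \msft{G}$: given a nonempty $H$-SFT $X$, its free extension $\widetilde{X}$ to $G$ is a nonempty $G$-SFT (this was noted right after the definition of free extension, and it preserves the SFT/sofic property), and by \Cref{free-extensions}(2) we have $m(\widetilde{X}) = m(X)$; the same argument with ``sofic'' in place of ``SFT'' gives $\msof{H}\subset\msof{G}$. For the reverse inclusion $\msft{G}\subset \msft{H}$: given a nonempty $G$-SFT $Y$, apply the Carroll–Penland construction $\Psi$ to obtain the $H$-subshift $\Psi(Y)$, which is a nonempty $H$-SFT (resp.\ sofic $H$-subshift) by the cited result from \cite{carroll_Periodic_2015}, and $m(\Psi(Y)) = m(Y)$ by \Cref{higher-block-preserves-m}. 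This yields $\msft{G}\subset\msft{H}$ and $\msof{G}\subset\msof{H}$, completing the finite-index case.

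Now I would assemble the general case. Since $G$ and $H$ are commensurable, fix finite-index subgroups $G' \leqslant G$ and $H' \leqslant H$ together with an isomorphism $\varphi\colon G'\to H'$. Note $G'$ and $H'$ are finitely generated, being finite-index subgroups of finitely generated groups. By the finite-index invariance just proved, $\msft{G} = \msft{G'}$ and $\msft{H} = \msft{H'}$, and likewise for $\msof{}$. Since $\msft{}$ and $\msof{}$ are invariants of group isomorphism, $\msft{G'} = \msft{H'}$ and $\msof{G'} = \msof{H'}$. Chaining these three equalities gives $\msft{G} = \msft{H}$ and $\msof{G} = \msof{H}$, as desired.

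I do not expect a genuine obstacle here: the proposition is essentially a bookkeeping consequence of the two preceding lemmas plus isomorphism-invariance. The only point requiring minor care is making sure that both directions of the finite-index inclusion are available — the ``going up'' direction via free extensions and \Cref{free-extensions}(2), and the ``going down'' direction via the Carroll–Penland map $\Psi$ and \Cref{higher-block-preserves-m} — and that in each case the relevant finiteness/SFT/sofic properties are preserved, which has already been recorded in the excerpt. One should also double-check that the auxiliary subshift produced in each step is nonempty, which is immediate since the constructions $\widetilde{(\cdot)}$ and $\Psi$ send nonempty subshifts to nonempty subshifts.
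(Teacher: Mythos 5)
Your proposal is correct and follows essentially the same route as the paper: the paper likewise reduces to the case of a finite-index subgroup via isomorphism-invariance of the degree classes, then uses the free extension together with \Cref{free-extensions}(2) for one inclusion and the Carroll--Penland map $\Psi$ together with \Cref{higher-block-preserves-m} for the other. The only difference is that you spell out explicitly the chaining through $G'\cong H'$ that the paper compresses into a ``without loss of generality.''
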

    \begin{proof}
        As the set of Medvedev degrees is an isomorphism invariant of groups, we assume without loss of generality that $H$ is a finite index subgroup of $G$.
        
        Let $X$ be an SFT on $H$. As $H$ has finite index, it follows by~\Cref{free-extensions} that the free extension of $X$ in $G$ is a $G$-SFT that has the same Medvedev degree as $X$. Therefore $ \msft{H} \subset \msft{G}$. 
        
        Conversely, given a $G$-SFT $X$, we have that $\Psi(X)$ is an $H$-SFT. Moreover, it has the same Medvedev degree as $X$ by~\Cref{higher-block-preserves-m}. This shows that $ \msft{G} \subset \msft{H}$. The proof that $\msof{G} = \msof{H}$ is identical. 
    \end{proof}

    \begin{proposition}\label{prop:quotients_fg}
        Consider a short exact sequence of groups $1 \to N \to G \to H \to 1$. If both $G$ and $N$ are finitely generated then \[ \msft{H} \subset \msft{G} \mbox{ and } \msof{H} \subset \msof{G}.   \]
    \end{proposition}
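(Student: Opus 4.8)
The plan is to observe that every $H$-SFT (resp.\ sofic $H$-subshift) is the pullback, along the quotient epimorphism $\rho\colon G\to H$, of its defining data, and that this pullback remains in the corresponding class on $G$. Since $H$ is automatically finitely generated (as a quotient of $G$) and \Cref{lema:medvedev_degree_of_pullback} already gives $m(\rho^*(X))=m(X)$ for every $H$-subshift $X$, it suffices to prove two closure statements: (a) if $X\subset A^H$ is an $H$-SFT, then $\rho^*(X)\subset A^G$ is a $G$-SFT; and (b) if $Y\subset B^H$ is sofic, then $\rho^*(Y)\subset B^G$ is sofic.

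For (a), I would fix a finite symmetric generating set $\{n_1,\dots,n_k\}$ of $N$ (this is exactly where finite generation of $N$ is used) and write $X=X_{\mathcal F}$ with $\mathcal F$ a finite set of forbidden patterns supported on a finite $F\subset H$; choose a lift $\widetilde F\subset G$ with $\rho(\widetilde f)=f$. Let $\mathcal G$ be the finite set of forbidden patterns on $G$ consisting of, first, all patterns with support $\{1_G,n_i\}$ taking distinct values at the two points (for $i=1,\dots,k$), and second, for each $p\in\mathcal F$ the lifted pattern $\widetilde p\in A^{\widetilde F}$ given by $\widetilde p(\widetilde f)=p(f)$. I claim $\rho^*(X)=X_{\mathcal G}$. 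A configuration avoids the first family iff $x(g)=x(gn_i)$ for all $g,i$, which by telescoping along words in the $n_i$ (using symmetry) is equivalent to right $N$-invariance, hence to $x=\rho^*(y)$ for a unique $y\in A^H$. For such $x$, the identity $x(g\widetilde f)=y(\rho(g)f)$ shows that $x$ contains a translate of $\widetilde p$ at $g$ iff $y$ contains a translate of $p$ at $\rho(g)$; since $\rho$ is surjective, $x$ avoids all $\mathcal G$-patterns iff $y$ avoids all $\mathcal F$-patterns, i.e.\ $y\in X$. Thus $\rho^*(X)=X_{\mathcal G}$ is a $G$-SFT, and $m(\rho^*(X))=m(X)$ yields $\msft{H}\subset\msft{G}$.

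For (b), write $Y=\phi(X)$ with $X$ an $H$-SFT and $\phi\colon X\to Y$ a factor map; by Curtis--Hedlund--Lyndon, $\phi$ has a memory set $F\subset H$ and local rule $\Phi\colon A^F\to B$. Lifting $F$ to $\widetilde F\subset G$ as above and setting $\widetilde\Phi(p)=\Phi(f\mapsto p(\widetilde f))$, the induced sliding block code $\widetilde\phi\colon A^G\to B^G$ satisfies $\widetilde\phi(\rho^*(y))=\rho^*(\phi(y))$ for all $y\in X$ (a one-line check using $(g^{-1}\rho^*(y))(\widetilde f)=y(\rho(g)f)$). Restricting $\widetilde\phi$ to the $G$-SFT $\rho^*(X)$ from part (a) therefore gives a factor map onto $\rho^*(\phi(X))=\rho^*(Y)$, so $\rho^*(Y)$ is a sofic $G$-subshift; together with $m(\rho^*(Y))=m(Y)$ this gives $\msof{H}\subset\msof{G}$.

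The main obstacle is really the only non-bookkeeping point, namely verifying the identity $\rho^*(X)=X_{\mathcal G}$: one must check both that right $N$-invariance is captured by finitely many forbidden patterns — this is precisely where the hypothesis that $N$ is finitely generated is essential, and without it $\rho^*(X)$ need not be an SFT — and that the lifted patterns simultaneously detect all translates of the original patterns. Everything else (finite generation of $H$, the sliding-block-code computation, and the transfer of Medvedev degrees) is routine or already available from \Cref{lema:medvedev_degree_of_pullback}.
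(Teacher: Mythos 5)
Your proposal is correct and follows essentially the same route as the paper: reduce to showing that the pullback $\rho^*$ preserves the SFT and sofic classes, encode right $N$-invariance by finitely many forbidden patterns over a symmetric generating set of $N$ together with lifts of the original forbidden patterns, and lift the local rule of the factor map for the sofic case, with \Cref{lema:medvedev_degree_of_pullback} supplying the equality of Medvedev degrees. The only cosmetic difference is that the paper phrases the SFT condition via a single set of allowed patterns on the combined support $U\cup S$ rather than two separate families of forbidden patterns.
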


    \begin{proof}
        Denote by $\rho\colon G\to H$ the epimorphism in the short exact sequence and let $X\subset A^H$ be an SFT. By~\Cref{lema:medvedev_degree_of_pullback}, we have that the pullback $\rho^*(X)$ satisfies $m(\rho^*(X))=m(X)$. Therefore in order to show that $\msft{H} \subset \msft{G}$ it suffices to show that $\rho^*(X)$ is an SFT.

        As $X$ is an SFT, there exists a finite set $F\subset H$ and $L \subset A^F$ such that $x \in X$ if and only if $(hx)|_{F} \in L$ for every $h \in H$. For every $f \in F$, choose $g_f \in G$ such that $\rho(g_f) = f$ and let $U =\{g_f : f \in F \}\subset G$.
        Let $S\subset G$ be a finite symmetric generating set of $N$ which contains the identity and consider the set $E = U \cup S$. Let \[ W = \{ w \in A^{E} : (w(g_f))_{f \in F} \in L \mbox{ and } w(s) = w(1_G) \mbox{ for every } s \in S.\}.   \]

        It is immediate from the definition that $\rho^*(X) = \{y \in A^G : (gy)|_{E} \in W \mbox{ for every } g \in G\}$. In particular, this means that $\rho^*(X)$ is an SFT (a set of defining forbidden patterns is $A^{E}\setminus W$.

        Now let $Y\subset B^{H}$ be a sofic shift and let $X$ be some SFT such that $\phi \colon X \to Y$ is a topological factor map given by some local map $\Phi \colon A^{K}\to B$ for some finite $K\subset H$. As before, for every $k \in K$ choose $g_k \in G$ such that $\rho(g_k) = k$ and let $V =\{g_k : k \in K \}\subset G$. By the previous argument, we have that $\rho^*(X)$ is also an SFT. Let $\phi^*\colon \rho^*(X)\to B^{G}$ be given by the local rule $\Phi^* \colon A^V \to B$ defined by \[  \Phi^*( (a_v)_{v \in V}) = \Phi( (a_{g_k})_{k \in K}). \]

        It is clear from the definition that $\phi^*(\rho^*(X)) = \rho^*(Y)$, thus $\rho^*(Y)$ is also a sofic shift and thus $\msof{H} \subset \msof{G}$.
    \end{proof}

    \begin{remark}
        Without the assumption that $N$ is finitely generated,~\Cref{prop:quotients_fg} does not hold. For instance, for the short exact sequence $1 \to [F_2,F_2] \to F_2 \to \ZZ^2 \to 1$ we have that $\msft{F_2} = \{0_\M\}$ (\Cref{prop:virtually_free_have_0_degree}) but $\msft{\ZZ^2}$ is the set of all $\Pi_1^0$ Medvedev degrees. We note that this same example has been used to exhibit that the same assumptions are needed to transfer strong aperiodicity~\cite{jeandel_Translationlike_2015} and the domino problem~\cite{aubrun_Domino_2018}.
    \end{remark}

    \subsection{Bounded actions and the orbit membership problem}
    
    Let $G,H$ be two finitely generated groups, endowed with word metrics. A right action $\ast$ of $H$ on $G$ is called \define{bounded} if for every $h\in H$, the map given by $g\mapsto g\ast h$ is at bounded distance from the identity function on $G$. An equivalent condition is that, for some (equivalently, every) finite generating set $S\subset H$, there exists a finite set $F\subset G$ with $g^{-1}(g\ast s)\in F$ for every $g\in G$ and $s\in S$. A right action is called \define{translation-like} if it is bounded and free. 

    The set of all bounded actions of $H$ on $G$ with fixed parameters $S$ and $F$ can be parametrized by a $G$-subshift in which every symbol is an element of $F^S$ which encodes locally the action by every generator. Moreover, we can overlay this subshift with configurations coming from an $H$-subshift and use it to produce a $G$-subshift which inherits some of the properties of the overlayed $H$-subshift. This technique was introduced in \cite{jeandel_Translationlike_2015}, and has been used several times to construct subshifts on groups with interesting properties~\cite{barbieri_entropies_2021, carrasco-vargas_geometric_2023_, cohen_Strongly_2021, barbieri_geometric_1}. In what follows we will briefly describe the construction, and refer the reader to~\cite{jeandel_Translationlike_2015} for details.
    
    Let $S$ be a finite and symmetric generating set for $H$, and $F\subset G$ a finite set. Let $B$ be the set of maps from $S$ to $F$. Given $x\in B^G$, a group element $g\in G$, and a word $w\in S^\ast$, we denote by $\Phi(g,x,w)$ the group element in $G$ obtained by interpreting $x(g)$ as an arrow from $g$ to $gx(g)$ labeled by $S$, and following these arrows as indicated by the word $w$.  More precisely, we define $\Phi(g,x,\epsilon)=g$, and then for $s\in S$ and $w\in S^\ast$ we set $\Phi(g,x,ws)=\Phi(g,x,w)\cdot x(\Phi(g,x,w))(s)$.  
    \begin{definition}
        Given $H,G,S,F$ and $B$ as above, the \define{subshift of bounded actions} $\operatorname{T} \subset B^G$ is the set of all configurations $x\in B^G$ which satisfy that $\Phi(g,x,w)=g$, for every $g\in G$ and $w\in S^\ast$ that represents the identity element in $H$.  
    \end{definition}
    We note that in general, the $G$-subshift $\operatorname{T}$ is not of finite type. However, in the case where $G$ is finitely presented the condition on configurations of $\operatorname{T}$ can be enforced with finitely many relations which in turn can be encoded through finitely many forbidden patterns, thus in this case $\operatorname{T}$ is a $G$-SFT.
    
    Observe that for every $x\in B^G$, the free monoid $S^*$ acts (on the right) on $G$ by $g \cdot w = \Phi(g,x,w)$ for $w \in S^*$. From our definition, it is clear that $\operatorname{T}$ is precisely the subset of configurations which induce an action of $H$. With this in mind, we will extend the notation above so that given $g\in G$, $h\in H$, and $x\in \operatorname{T}$, we write $\Phi(g,x,h)$ to denote $\Phi(g,x,w)$ for any $w \in S^*$ which represents $h$. 

    \begin{definition}
        Let $X\subset A^H$ be a subshift. We define $\operatorname{T}[X]\subset(A\times B)^G$ as the set of all configurations $(x,y)\in A^G \times B^G$ such that $y\in T$ and for every $g\in G$, the configuration $\left(x(\Phi(g,y,h))\right)_{h \in H}$ lies in $X$. 
    \end{definition}

    \begin{proposition}[\cite{jeandel_Translationlike_2015}, Section 2]\label{T[X]-es-SFT}
        The sets $\operatorname{T}$ and $\operatorname{T}[X]$ are $G$-subshifts. Moreover, if $H$ is finitely presented and $X$ is an $H$-SFT, then $\operatorname{T}[X]$ is a $G$-SFT. 
    \end{proposition}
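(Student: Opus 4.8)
The plan is to verify the three assertions in order, moving from the structural claims about $\operatorname{T}$ and $\operatorname{T}[X]$ being subshifts to the finite-type statement under the extra hypotheses. First I would show that $\operatorname{T}\subset B^G$ is a $G$-subshift. Closedness is immediate: membership in $\operatorname{T}$ is a conjunction, over all $g\in G$ and all words $w\in S^\ast$ representing $1_H$, of the condition $\Phi(g,x,w)=g$, and each such condition only constrains the values of $x$ on the finite set of group elements visited while following the path of $w$ from $g$; hence each condition is clopen, and $\operatorname{T}$ is an intersection of clopen sets, so closed. For shift-invariance one checks directly from the recursive definition of $\Phi$ that $\Phi(g, g_0 x, w) = g_0\,\Phi(g_0^{-1}g, x, w)$ for every $g_0\in G$ — this is just the statement that the labelled-arrow structure is translated equivariantly — and therefore $x\in\operatorname{T}$ iff $g_0x\in\operatorname{T}$.

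Next, for $\operatorname{T}[X]$: a pair $(x,y)\in A^G\times B^G$ lies in $\operatorname{T}[X]$ iff $y\in\operatorname{T}$ and for every $g\in G$ the configuration $z^{(g)}:=\left(x(\Phi(g,y,h))\right)_{h\in H}\in A^H$ lies in $X$. Since $X$ is a subshift, there is a set of forbidden patterns $\mathcal F$ defining it; $z^{(g)}\in X$ then translates into: for every $h\in H$ and every $p\in\mathcal F$ with support $D_p\subset H$, the pattern $\left(x(\Phi(g,y,hd))\right)_{d\in D_p}$ is not $p$. Again each of these conditions, together with $y\in\operatorname{T}$, depends only on finitely many coordinates of $(x,y)$ (the coordinates $y$ needs to witness the relevant $\Phi$-values, plus the finitely many $x$-coordinates thereby reached), so $\operatorname{T}[X]$ is closed; shift-invariance follows from the same equivariance property of $\Phi$ used above, now applied to both coordinates. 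This gives the first sentence of the proposition.

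For the finite-type claim, assume $H$ is finitely presented, say $H=\langle S\mid R\rangle$ with $R$ finite, and $X$ is an $H$-SFT defined by finitely many forbidden patterns $\mathcal F$. The key point is that $y\in\operatorname{T}$ need no longer be tested against \emph{all} words representing $1_H$: it suffices to impose $\Phi(g,y,w)=g$ for $g\in G$ and $w$ ranging over the finitely many relators in $R$, because any word representing $1_H$ is a product of conjugates of relators and one argues by induction on the number of such factors, using the equivariance of $\Phi$, that the relator-conditions already force $\Phi(g,y,w)=g$ for all null-homotopic $w$. Each relator condition constrains $y$ on a bounded neighbourhood of $g$ (of radius at most the relator length, measured in the $\ast$-action, hence inside a fixed finite subset of $G$ by boundedness), so it is a finite-type condition. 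Likewise, finitely many forbidden patterns $\mathcal F$ for $X$ give finitely many forbidden patterns for the $x$-coordinate: forbid, for each $p\in\mathcal F$, the local configurations of $(x,y)$ around $g$ in which $y$ exhibits a valid partial action and the induced values $\left(x(\Phi(g,y,d))\right)_{d\in D_p}$ equal $p$. Collecting these, $\operatorname{T}[X]$ is cut out by finitely many forbidden patterns, hence is a $G$-SFT.

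The main obstacle is the reduction, in the last paragraph, from the infinitely many identity-words to the finitely many relators: one must check that the inductive step genuinely goes through, i.e.\ that if the relator conditions hold at every $g\in G$ then $\Phi(g,y,uru^{-1})=g$ for each relator $r$ and each $u\in S^\ast$, and then that this propagates through products. This is exactly the content that makes $\operatorname{T}$ (intersected with the $X$-condition) finitely describable, and it is where finite presentability of $H$ is used in an essential way; the rest is bookkeeping about which finite subset of $G$ each condition touches, which is handled uniformly by the boundedness of the action (the set $F$ with $g^{-1}(g\ast s)\in F$). I would cite \cite{jeandel_Translationlike_2015} for the details of this reduction rather than reproduce them.
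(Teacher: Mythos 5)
Your argument is correct and is essentially the one in the cited source (the paper itself gives no proof of this proposition, deferring entirely to \cite{jeandel_Translationlike_2015}, Section 2). The one point to make explicit in your reduction to finitely many relators is that $\Phi$ is defined on the free \emph{monoid} $S^\ast$, so besides the defining relators of $H$ you must also impose the finitely many conditions $\Phi(g,y,ss^{-1})=g$ for $s\in S$: these force each generator to act bijectively with $s^{-1}$ acting as its inverse, which is what lets the $S^\ast$-action extend to $F(S)$ and makes the conjugation step close up, since $\Phi(g,y,uru^{-1})=\Phi(g',y,u^{-1})$ with $g'=\Phi(g,y,ur)=\Phi(g,y,u)$ returns to $g$ only if the cancellation conditions hold. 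With that addition (these extra words also represent $1_H$, so they are legitimately among the conditions defining $\operatorname{T}$), the bookkeeping is exactly as you describe.
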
 

    Notice that if $Y$ is a sofic $H$-subshift and $H$ is finitely presented we have that $\operatorname{T}[Y]$ is a sofic $G$-subshift. Indeed, it suffices to take an $H$-SFT $X$ for which there exists a topological factor map $\phi \colon X \to Y$, let $W = \{ (x,y) \in X\times Y : \phi(x) = y\}$ and notice that $W$ is an $H$-SFT. Then $T[W]$ is also a $G$-SFT and $T[Y]$ can be obtained as a projection of $T[W]$.
    
    Next we will study the relation between the degrees $m(X)$ and $m(\operatorname{T}[X])$. In order to do that, we need to introduce the following notion (see also \cite{bogopolski2010orbit,carrasco-vargas_geometric_2023_}).

    \begin{definition}
        Let $G,H$ be finitely generated groups and $R$ a finite set of generators for $G$. A right action $\ast$ of $H$ on $G$ has \define{decidable orbit membership problem} if there is a Turing machine which on input $u,v \in R^*$ decides whether $\underline{u}$ and $\underline{v}$ lie in the same $H$-orbit.
    \end{definition}
    Observe that when $G$ is recursively presented and the action is computable, this notion is equivalent to the existence of a decidable set $I$ and computable sequence $(u_i)_{i \in I}$ with $u_i \in R^*$ such that the corresponding sequence of elements $(\underline{u_i})_{i \in I}$ in $G$ form a collection of representatives for the orbits of $H$. 

    \begin{proposition}\label{prop:medvedev-degrees-and-translation-like-actions}
        Let $G,H$ be finitely generated groups and let $\operatorname{T}$ be the subshift of bounded actions for parameters $S,F$. Then:
        
        \begin{enumerate}
            \item For every subshift $X\subset A^H$ we have $m(\operatorname{T}[X])\geq m(\operatorname{T})\vee m(X)$.
            \item Suppose that $G$ is recursively presented and that there is a translation-like action $H\curvearrowright G$ which is computable and has decidable orbit membership problem (for the parameters $S$ and $F$ already fixed). For every subshift $X\subset A^H$ we have $ m(\operatorname{T}[X]) = m(X)$.      
        \end{enumerate}
    \end{proposition}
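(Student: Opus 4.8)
The plan is to prove both parts by exhibiting explicit computable maps between the pullback subshifts on the relevant free groups, using the combinatorial description of $\Phi$ together with \Cref{prop:basic-properties}. Throughout we may assume $X\neq\emptyset$ (if $X=\emptyset$ then $\operatorname{T}[X]=\emptyset$ and both asserted (in)equalities are trivial since everything equals $1_\M$). Fix finite symmetric generating sets $R$ of $G$ and $S$ of $H$, the latter being the $S$ from the parameters, and for each $f\in F$ fix a word $r_f\in R^\ast$ representing $f$ in $G$.

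For item (1) I would first observe that the coordinate projection $(x,y)\mapsto y$ restricts to a morphism of $G$-subshifts $\operatorname{T}[X]\to\operatorname{T}$, so $m(\operatorname{T}[X])\geq m(\operatorname{T})$ by \Cref{prop:basic-properties}. Then I would consider the map $\Theta\colon\operatorname{T}[X]\to X$ sending $(x,y)$ to the configuration $h\mapsto x(\Phi(1_G,y,h))$, which lands in $X$ by the very definition of $\operatorname{T}[X]$. The key point is that $\Theta$ is computable on pullbacks: given $(\widehat x,\widehat y)$ and a word $s_1\cdots s_n\in S^\ast$, one builds a word in $R^\ast$ representing $\Phi(1_G,y,s_1\cdots s_n)$ by iterating the defining recursion $\Phi(g,y,ws)=\Phi(g,y,w)\cdot y(\Phi(g,y,w))(s)$ --- each step queries $\widehat y$ at the already-computed word for the current position and appends the corresponding $r_f$; reading $\widehat x$ at the final word gives the value of $\Theta(x,y)$ at $\underline{s_1\cdots s_n}$. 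Hence $m(\operatorname{T}[X])\geq m(X)$ as well, and since $m(\operatorname{T}[X])$ dominates both $m(\operatorname{T})$ and $m(X)$ it dominates their supremum $m(\operatorname{T})\vee m(X)$.

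For item (2) it remains to prove $m(\operatorname{T}[X])\leq m(X)$. Let $\ast$ be the fixed translation-like action, which by hypothesis has bounded parameters $S,F$; it corresponds to the configuration $y^\ast\in B^G$ defined by $y^\ast(g)(s)=g^{-1}(g\ast s)\in F$, which satisfies $\Phi(g,y^\ast,w)=g\ast\underline w$, so $y^\ast\in\operatorname{T}$. Since $\ast$ is a computable action and $G$ is recursively presented, $\widehat{y^\ast}$ is computable (to name the value in $F$ one semi-decides equality against the finitely many $r_f$). Using computability of the action, decidability of orbit membership, and recursive presentation of $G$, there is a decidable index set $I$ and a computable family $(u_i)_{i\in I}$ of words in $R^\ast$ whose classes form a transversal of the $\ast$-orbits. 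Given $x'\in X$, define $x\in A^G$ by $x(g)=x'(h)$, where $\underline{u_i}\ast h=g$ is the unique pair with $i\in I$ and $h\in H$ --- unique since $\ast$ is free. A short computation using that $\ast$ is a right action shows that along every orbit $x$ is a shift of $x'$, so $(x,y^\ast)\in\operatorname{T}[X]$ by shift-invariance of $X$. Finally $\widehat x$ is computable from $\widehat{x'}$: given $w\in R^\ast$, run the orbit-membership algorithm to find the unique $i\in I$ with $u_i$ and $w$ in the same orbit, then dovetail over $v\in S^\ast$ semi-deciding $\underline{u_i}\ast\underline v=\underline w$ (computing $\underline{u_i}\ast\underline v$ as a word and testing equality via the recursive presentation of $G$); freeness guarantees the search halts and that any witness $v$ represents the correct $h$, so outputting $\widehat{x'}(v)$ is correct. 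This gives a computable map $\widehat X\to\widehat{\operatorname{T}[X]}$, hence $m(\operatorname{T}[X])\leq m(X)$, and with item (1) we get $m(\operatorname{T}[X])=m(X)$.

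The main obstacle is the last computability claim in item (2): from an arbitrary $w\in R^\ast$ one must recover both the $\ast$-orbit of $\underline w$ and its ``coordinate'' $h$ within that orbit, returning a word in $S^\ast$ for $h$, using only semi-decidability of the word problem of $G$, computability of the action, and decidability of orbit membership. The argument that the unbounded search for $h$ both terminates and returns a correct representative relies crucially on freeness. The other delicate point is verifying $(x,y^\ast)\in\operatorname{T}[X]$: one must check that the transversal-based definition of $x$ yields an element of $X$ along every orbit and from every base point, which is precisely where shift-invariance of $X$ is used.
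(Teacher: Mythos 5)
Your proposal is correct and follows essentially the same route as the paper's proof: the projection and the $\widehat{\Phi}$-iteration map for item (1), and for item (2) the computable point of $\operatorname{T}$ coming from the action together with the orbit-transversal construction and the dovetailed search over $S^\ast$ using the recursive presentation. The only differences are cosmetic (e.g.\ representing elements of $F$ by fixed words $r_f$ rather than enlarging the generating set $R$ to contain $F$).
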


    \begin{proof}
        First observe that it is possible for $\operatorname{T}[X]$ to be empty, but then the inequality holds for the trivial reason that $m(\varnothing)=1_{\M}$ is maximal. Suppose now that $m(\operatorname{T}[X])$ is nonempty. The projection to the second coordinate of the alphabet $A\times B$ yields a function $m(\widehat{\operatorname{T}[X]})\to \widehat{\operatorname{T}}$, which is computable by~\Cref{prop:basic-properties}. It follows that $m(\operatorname{T}[X])\geq m(\operatorname{T})$. We now verify the nontrivial inequality $m(\operatorname{T}[X])\geq m(X)$. 
        
        Let $R$ be a finite generating set for $G$ which contains the range of the alphabet $B$ of $\operatorname{T}$, and let $F(R)$ be the free group on $R$. Given $x\in B^{F(R)}$, $u\in F(R)$, and $w\in S^\ast$, we denote by $\widehat{\Phi}(u,x,w)$ the element in $F(R)$ defined as follows. Recursively, $\widehat{\Phi}(u,x,\epsilon)=u$, and for $s\in S$ and $w\in S^\ast$, $\Phi(u,x,ws)=\Phi(u,x,w)\cdot x(\Phi(u,x,w))(s)$. Then it is clear that $\widehat{\Phi}$ is a computable function. Also note that $\widehat{\Phi}$ and $\Phi$ are compatible, in the sense that for all $x\in \operatorname{T}$ with pullback $\widehat{x}\in \widehat{\operatorname{T}}\subset B^{F(R)}$, for all $g\in G$, $u\in F(R)$ with $\underline{u}=g$, and for all $w\in S^\ast$, we have that $\Phi(g,x,w)=\underline{\widehat \Phi (u,\widehat x,w)}$.  We now define 
        
        \[\Psi\colon (A\times B)^{F(R)}\to A^{F(S)}, \mbox{ given by } \Psi(x,y) = \Bigl(u\mapsto \bigl(x(\widehat{\Phi}(1,y,u))\bigr)\Bigr).\]
        
        This function is computable because $\widehat{\Phi}$ is computable. Moreover, it follows from the compatibility of $\widehat{\Phi}$ and $\Phi$,  that $\Psi(\widehat{\operatorname{T}[X]})\subset \widehat{X}$. This shows that $m(\operatorname{T}[X])\geq m(X)$.
        
        We now consider the second item. Suppose there exists a translation-like action $\ast$ as in the statement. We define a configuration $y \in B^{F(R)} $ by $y(u)(s)=(\underline{u}^{-1})(\underline{u}\ast s)$, where $\underline{u}$ is the element of $G$ represented by $u \in F(R)$. As $\ast$ is a computable translation-like action and $G$ is recursively presented, it follows that $y$ is a computable element in $B^{F(R)}$. Moreover, it is clear by its definition that $y \in \widehat{T}$ thus it follows that $m(\operatorname{T})=0_{\M}$. By the first item in the statement, we obtain that $m(\operatorname{T}[X])\geq m(X)$. 
        
        Finally, we prove that $m(X)\geq m(\operatorname{T}[X])$. As the translation-like action $\ast$ has decidable orbit membership problem, we can compute a set of elements $(u_i)_{i\in I}\subset F(R)$, where $I\subset \NN$ is a decidable set, such that the corresponding elements $(\underline{u_i})_{i \in I}$ in $G$ are a collection of representatives for orbits of the action. Let $\Theta\colon \widehat{X} \to \widehat{\operatorname{T}[X]}$ be the map given by $\Theta (z)=(x,y)$, where $y$ is the computable point already defined above, and $x$ is defined by 
        \[
        x(u)=z(w) \mbox{ for } u \in F(R), \mbox{ where } w \in F(S) \mbox{ is such that } \underline{\widehat{\Phi}(u_i,y,w)}=\underline{u} \mbox{ for some } i\in I.
        \]
        The function $\Theta$ is well defined by the above construction and is computable. Indeed, as $G$ is recursively presented, there is an algorithm which, given $u\in F(R)$, computes $i\in I$ and $w\in F(S)$ such that $\underline{u} =\underline{\widehat \Phi (u_i,y, w)}$. It follows that $m(\operatorname{T}[X]) = m(X)$.\end{proof}

    \begin{corollary}\label{prop:translation-like-actions-and-medvedev-degrees}
        Let $G,H$ be finitely generated groups, where $H$ is finitely presented and admits a translation-like action on $G$.
        \begin{enumerate}
            \item For every sofic $H$-subshift $Y$, there is a $G$-SFT $X$ with $m(X)\geq m(Y)$. In particular, if $H$ admits a sofic subshift with nonzero Medvedev degree, then so does $G$.
            \item If $G$ is recursively presented and there is a translation-like action of $H$ on $G$ that is computable and has decidable orbit membership problem, then $\msft{H}\subset\msft{G}$ and $\msof{H}\subset\msof{G}$. 
        \end{enumerate}
    \end{corollary}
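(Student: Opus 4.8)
The plan is to obtain both statements by applying~\Cref{T[X]-es-SFT} and~\Cref{prop:medvedev-degrees-and-translation-like-actions} to the subshift $\operatorname{T}[\,\cdot\,]$ built from a suitable bounded action. First I would fix a finite symmetric generating set $S$ of $H$ and a finite set $F\subset G$ so that the given translation-like action $H\curvearrowright G$ is bounded with these parameters, and let $\operatorname{T}\subset B^G$ be the corresponding subshift of bounded actions; in particular that action is a distinguished \emph{free} configuration $y^\ast\in\operatorname{T}$, which I will use for nonemptiness.

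For item (1), I would write the sofic $H$-subshift $Y$ as a topological factor $\phi\colon X'\to Y$ of an $H$-SFT $X'$. As in the remark following~\Cref{T[X]-es-SFT}, the graph $W=\{(x,\phi(x)):x\in X'\}$ is an $H$-SFT, and since $H$ is finitely presented,~\Cref{T[X]-es-SFT} gives that $X:=\operatorname{T}[W]$ is a $G$-SFT. By~\Cref{prop:medvedev-degrees-and-translation-like-actions}(1) we get $m(X)\geq m(W)$, and since the second-coordinate projection is a topological factor map $W\to Y$,~\Cref{prop:basic-properties}(1) yields $m(W)\geq m(Y)$; hence $m(X)\geq m(Y)$. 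For the ``in particular'' clause I would additionally check that $X$ is nonempty whenever $Y$ is: then $W\neq\varnothing$, and picking any $w_0\in W$ and using that $y^\ast$ induces a \emph{free} $H$-action on $G$ (so the orbits are in bijection with $H$) one can place a copy of $w_0$ along each orbit to produce a configuration $x$ with $(x,y^\ast)\in\operatorname{T}[W]=X$; thus $m(X)\geq m(Y)\neq 0_\M$ with $X\neq\varnothing$.

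For item (2) I additionally assume $G$ recursively presented and, adjusting $S$ and $F$ if necessary, that the fixed translation-like action is computable with decidable orbit membership problem, so that~\Cref{prop:medvedev-degrees-and-translation-like-actions}(2) applies. Given a nonempty $H$-SFT $X$, the subshift $\operatorname{T}[X]$ is a $G$-SFT by~\Cref{T[X]-es-SFT} and $m(\operatorname{T}[X])=m(X)$ by~\Cref{prop:medvedev-degrees-and-translation-like-actions}(2); since then $m(\operatorname{T}[X])=m(X)\neq 1_\M$, the subshift $\operatorname{T}[X]$ is nonempty, giving $\msft{H}\subset\msft{G}$. Similarly, given a nonempty sofic $H$-subshift $Y$, the remark after~\Cref{T[X]-es-SFT} shows $\operatorname{T}[Y]$ is a sofic $G$-subshift, and~\Cref{prop:medvedev-degrees-and-translation-like-actions}(2), which is stated for arbitrary subshifts, gives $m(\operatorname{T}[Y])=m(Y)\neq 1_\M$, so $\operatorname{T}[Y]$ is a nonempty sofic $G$-subshift of the desired degree and $\msof{H}\subset\msof{G}$. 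I do not expect any genuine obstacle here, since the result is essentially a repackaging of the two cited propositions; the only delicate points are (a) arranging the parameters $S,F$ of $\operatorname{T}$ so that the relevant translation-like action is literally a configuration of $\operatorname{T}$ (simultaneously computable with decidable orbit membership problem in the case of item (2)), and (b) the elementary nonemptiness checks above, which are needed only because the classes $\msft{G}$, $\msof{G}$ and the phrase ``nonzero Medvedev degree'' refer to nonempty subshifts.
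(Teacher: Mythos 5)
Your proof is correct and follows essentially the same route as the paper's: pass to an $H$-SFT extension of $Y$ (you use the graph $W$ of the factor map, which is topologically conjugate to the extension the paper uses), then apply \Cref{T[X]-es-SFT} together with \Cref{prop:medvedev-degrees-and-translation-like-actions}. The only difference is that you explicitly verify nonemptiness of $\operatorname{T}[W]$ (via the distinguished free configuration $y^\ast$) and of $\operatorname{T}[X]$ (via $m(\operatorname{T}[X])=m(X)\neq 1_\M$), points the paper leaves implicit.
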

    \begin{proof}
    Consider a sofic $H$-subshift $Y$. Let X be an $H$-SFT extension. By~\Cref{prop:basic-properties} we have $m(X)\geq m(Y)$. Let $\operatorname{T}$ be the subshift of bounded actions with parameters such that it is nonempty. By~\Cref{T[X]-es-SFT} we have that $Z= \operatorname{T}[X]$ is a $G$-SFT and by~\Cref{prop:medvedev-degrees-and-translation-like-actions} it follows that $m(Z)\geq m(X)$. We conclude that $m(Z) \geq m(Y)$. 
    
    With the extra hypotheses of the second item of~\Cref{prop:medvedev-degrees-and-translation-like-actions} we have that $m(T[X]) = m(X)$ for any subshift $X$. If we take $X$ an $H$-SFT (respectively a sofic $H$-subshift) then $\operatorname{T}[X]$ is a $G$-SFT (respectively sofic $G$-subshift) and thus we obtain $\msft{H}\subset\msft{G}$ and $\msof{H}\subset\msof{G}$. 
    \end{proof}

    \begin{observation}
        The previous results can be sometimes used as obstructions for the existence of translation-like actions. For instance, the fact that $\ZZ^2$ admits SFTs with nonzero Medvedev degree, and that $F_2$ does not, implies by the previous results that $\ZZ^2$ does not act translation-like on $F_2$. While this fact is rather intuitive, a direct proof of this fact is not that straightforward, see \cite[Section 3]{cohen_counterexample_2019}. 
    \end{observation}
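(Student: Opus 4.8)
The plan is to argue by contradiction, using \Cref{prop:translation-like-actions-and-medvedev-degrees} as the engine. First I would record the general obstruction underlying the statement: if $H$ is finitely presented and acts translation-like on $G$, then by item~(1) of \Cref{prop:translation-like-actions-and-medvedev-degrees} every sofic $H$-subshift $Y$ --- in particular every $H$-SFT --- lifts to a $G$-SFT $X$ with $m(X)\geq m(Y)$. Hence, if $\msft{G}=\{0_\M\}$, then $\msft{H}$ can contain no nonzero degree, and therefore $H$ cannot act translation-like on $G$.

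For the concrete assertion, take $H=\ZZ^2$ and $G=F_2$. The group $\ZZ^2$ is finitely presented, so suppose towards a contradiction that there is a translation-like action $\ZZ^2\curvearrowright F_2$. By Simpson's theorem~\cite{simpson_Medvedev_2014}, the class $\msft{\ZZ^2}$ is the full class of $\Pi_1^0$ Medvedev degrees, which contains nonzero elements; fix a $\ZZ^2$-SFT $Y$ with $m(Y)\neq 0_\M$. Since an SFT is in particular sofic, \Cref{prop:translation-like-actions-and-medvedev-degrees}~(1) produces an $F_2$-SFT $X$ with $m(X)\geq m(Y)>0_\M$, so $\msft{F_2}$ contains a nonzero degree. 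This contradicts \Cref{prop:virtually_free_have_0_degree}, which gives $\msft{F_2}=\{0_\M\}$ because $F_2$ is (virtually) free.

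The only point demanding a little care is checking the hypotheses of \Cref{prop:translation-like-actions-and-medvedev-degrees}: one needs $\ZZ^2$ to be finitely presented, which it is, and the subshift of bounded actions $\operatorname{T}\subset B^{F_2}$ for a suitable choice of parameters $S,F$ to be nonempty, which is automatic once a translation-like action exists since that action itself determines a configuration of $\operatorname{T}$. No individual step is genuinely hard; the content of the Observation is precisely that the Medvedev-degree invariant compresses a fact that otherwise requires a nontrivial combinatorial argument (cf.~\cite[Section~3]{cohen_counterexample_2019}) into a one-line contradiction. The same template rules out any translation-like action on a group $G$ with $\msft{G}=\{0_\M\}$ (such as a virtually free group) by any finitely presented group that admits an SFT of nonzero Medvedev degree.
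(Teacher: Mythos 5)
Your argument is correct and is exactly the one the paper intends: combine \Cref{prop:translation-like-actions-and-medvedev-degrees}~(1) (with $H=\ZZ^2$ finitely presented) with Simpson's theorem giving a $\ZZ^2$-SFT of nonzero degree and \Cref{prop:virtually_free_have_0_degree} giving $\msft{F_2}=\{0_\M\}$, yielding a contradiction. Your added remark on the nonemptiness of $\operatorname{T}$ for suitable parameters is a correct reading of a detail the paper handles inside the proof of that corollary.
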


    In these results we have shown how to use the subshift of bounded actions to obtain SFTs with certain Medvedev degrees. We now consider the case of effective subshifts. 

    \begin{proposition}\label{prop:translation-like-action-subshift-is-effective}
        Let $G$ and $H$ be finitely generated  and recursively presented groups. Suppose further that there is a translation-like action $\ast$ of $H$ on $G$. Then:
        \begin{enumerate}
            \item The subshift $\operatorname{T}\subset B^G$ is effective. 
            \item If $X\subset A^H$ is effective, then $\operatorname{T}[X]\subset (A\times B)^G$ is effective. 
        \end{enumerate}
    \end{proposition}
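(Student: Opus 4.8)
The plan is to produce, in each case, an explicit Turing machine that enumerates forbidden patterns (over a free cover) witnessing effectiveness. Since both $G$ and $H$ are recursively presented, the map $\Phi(g,x,w)$ is computable in the sense that, working in the free cover $F(R)$ of $G$, the function $\widehat\Phi$ from the proof of Proposition \ref{prop:medvedev-degrees-and-translation-like-actions} is a total computable function of $(u,x|_{\text{finite}},w)$; this is the technical engine I will lean on throughout.

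\emph{Item (1).} First I would recall that $\operatorname{T}\subset B^G$ is exactly the set of $x$ such that $\Phi(g,x,w)=g$ for every $g\in G$ and every $w\in S^*$ representing $1_H$. Passing to the pullback $\widehat{\operatorname{T}}\subset B^{F(R)}$, I want to show this is $\Pi_1^0$. The idea is that the complement is $\Sigma_1^0$: a configuration $\widehat x$ fails to lie in $\widehat{\operatorname{T}}$ iff there is a word $w\in S^*$ with $\underline w = 1_H$ in $H$ and a group element $u\in F(R)$ with $\underline{\widehat\Phi(u,\widehat x,w)} \neq \underline u$ in $G$. Since $H$ is recursively presented, I can enumerate all words $w\in S^*$ with $\underline w=1_H$; since $G$ is recursively presented, the relation $\underline{v_1}=\underline{v_2}$ in $G$ is recursively enumerable; and evaluating $\widehat\Phi(u,\widehat x,w)$ only inspects $\widehat x$ on finitely many coordinates (those visited along the path traced by $w$ starting at $u$). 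So I enumerate triples $(u,w)$ together with a certificate that $\underline w=1_H$ and a certificate that $\underline{\widehat\Phi(u,\widehat x,w)}\neq\underline u$; each such triple, restricted to the finitely many coordinates of $\widehat x$ it reads, is a forbidden pattern. The union of all these cylinders is effectively open and its complement is $\widehat{\operatorname{T}}$, so $\operatorname{T}$ is effective.

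\emph{Item (2).} Now assume $X\subset A^H$ is effective, so $\widehat X\subset A^{F(T_H)}$ (with $T_H$ a generating set of $H$) is $\Pi_1^0$; fix a machine enumerating forbidden patterns $(p_i)_{i\in\NN}$ for $\widehat X$. I want $\widehat{\operatorname{T}[X]}\subset (A\times B)^{F(R)}$ to be $\Pi_1^0$. A configuration $(\widehat x,\widehat y)$ is excluded iff either $\widehat y\notin\widehat{\operatorname{T}}$ (handled by Item (1)), or $\widehat y\in\widehat{\operatorname{T}}$ but for some $g\in G$ the configuration $h\mapsto \widehat x(\widehat\Phi(u,\widehat y,h))$ — where $u$ represents $g$ — contains one of the forbidden patterns $p_i$ on its support. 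To detect the second possibility I enumerate: an index $i$; a group element $u\in F(R)$ (standing for a choice of orbit basepoint $g$); and for each coordinate $h$ in the support of $p_i$, a word $w_h\in S^*$ representing $h$, so that I can compute $v_h := \widehat\Phi(u,\widehat y,w_h)\in F(R)$. This computation reads $\widehat y$ on finitely many coordinates. I then check whether $\widehat x(v_h)$ disagrees with $p_i(h)$ for some $h$ — but I must be careful, since distinct words $w_h$ might land on the same group element, and the pattern reading must be consistent. The clean fix: require the enumeration also to provide $G$-equalities among the $v_h$ matching the $H$-equalities among the $h$'s needed for $p_i$ to be a genuine pattern; these are recursively enumerable since $G$ is recursively presented. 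Each successful enumeration yields a finite forbidden pattern for $(\widehat x,\widehat y)$ over $(A\times B)^{F(R)}$, and the union of these with those from Item (1) is effectively open with complement $\widehat{\operatorname{T}[X]}$.

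\emph{Main obstacle.} The genuinely delicate point is the bookkeeping in Item (2) around \emph{well-definedness}: $\operatorname{T}[X]$ only constrains $\widehat x$ along the actual orbit graph determined by $\widehat y$, and the naive ``read $\widehat x$ at $\widehat\Phi(u,\widehat y,w_h)$'' can be ambiguous unless one tracks which free-group elements represent equal group elements. I expect the cleanest route is to first establish that $\widehat y\in\widehat{\operatorname{T}}$ implies $\widehat\Phi(\cdot,\widehat y,\cdot)$ descends to a well-defined action $G\curvearrowright G$ (which is where recursive presentation of $H$ re-enters, to guarantee relations are respected), and then to note that we are \emph{enumerating forbidden patterns}, so it is harmless to also enumerate ``spurious'' constraints that only fire on configurations already excluded by Item (1): any over-eager forbidden pattern that could only be violated when $\widehat y\notin\widehat{\operatorname{T}}$ does no damage. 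This observation lets me decouple the two conditions and avoid trying to decide, mid-enumeration, whether $\widehat y\in\widehat{\operatorname{T}}$ — which would not be possible, as that set is merely $\Pi_1^0$.
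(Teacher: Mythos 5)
Your overall architecture (work in the free cover $F(R)$, use computability of $\widehat\Phi$, enumerate forbidden patterns, and over-enumerate harmlessly) is the right one, but there is a genuine gap at the heart of item (1). Your enumeration requires producing ``a certificate that $\underline{\widehat\Phi(u,\widehat x,w)}\neq\underline{u}$'' in $G$. For a recursively presented group, \emph{equality} of words is recursively enumerable (as you correctly note), but \emph{inequality} is only co-recursively enumerable; it is semi-decidable exactly when the word problem is decidable, which is not assumed here (there are finitely presented groups with undecidable word problem). So the set of violations you want to list is $\Pi_1^0$ rather than $\Sigma_1^0$, and your machine cannot enumerate the forbidden cylinders. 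This is precisely where the paper's proof takes a different turn: it introduces the auxiliary set $X_0$ of configurations for which $\widehat\Phi(g,x,w)$ and $g$ coincide \emph{in the free group} $F(R)$ --- a decidable condition --- so that the only semi-decidable ingredient is the enumeration of words $w\in S^*$ representing $1_H$. The same gap propagates to your item (2), since your treatment of the branch ``$\widehat y\notin\widehat{\operatorname{T}}$'' defers entirely to item (1). The remainder of your item (2) (reading $\widehat x$ along $\widehat\Phi$-paths, matching against an enumeration of forbidden patterns of $\widehat X$, and accepting spurious constraints that only fire on already-excluded configurations) is sound; it is essentially an unrolled version of the paper's cleaner argument that $\widehat{\operatorname{T}[X]}=\Omega^{-1}(\widehat{\operatorname{T}})\cap\Psi^{-1}(\widehat X)$ with $\Omega,\Psi$ computable and preimages of $\Pi_1^0$ sets under computable maps being $\Pi_1^0$.

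A secondary, fixable omission: your ``iff'' description of the complement of $\widehat{\operatorname{T}}$ is incomplete, because $\widehat{\operatorname{T}}=\rho^*(\operatorname{T})$ consists only of configurations that are constant on fibers of $\rho\colon F(R)\to G$, and a configuration can satisfy your $\widehat\Phi$-condition without being such a pullback. The paper repairs this by intersecting with $\widehat{B^G}$, which is $\Pi_1^0$ by \Cref{prop:sft-and-sofic-subshifts-are-effective} since the full shift is an SFT and $G$ is recursively presented. You should add the same intersection; together with replacing your inequality test in $G$ by an equality test carried out at the level of $F(R)$ (or some other device avoiding semi-decision of inequality), this would bring your argument in line with the paper's.
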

    \begin{proof}
        Let $S$ be a finite and symmetric finite generating set for $H$ and let $F\subset G$ be a finite set such that $g^{-1}(g\ast s)\in F$ for every $s \in S$, $g \in G$. Let $R$ be a finite symmetric generating set for $G$ that contains $F$, and let $\widehat \Phi$ be the function defined in the proof of~\Cref{prop:medvedev-degrees-and-translation-like-actions}. Let $X_0$ be the set of all elements $x\in B^{F(R)}$ such that, for all $g\in F(R)$, and for all $w\in S^\ast$ with $\underline w=1_G$, we have that $\widehat \Phi (g,x,w)$ and $g$ coincide in $F(R)$. As the set $\{w\in S^\ast : \underline{w}=1_G\}$ is recursively enumerable, it follows that $X_0$ is a $\Pi_1^0$ subset of $A^{F(R)}$. Also note that by~\Cref{prop:sft-and-sofic-subshifts-are-effective} $\widehat{A^G}$ is a $\Pi_1^0$ subset of $A^{F(R)}$. As $\widehat{\operatorname{T}}$ equals the intersection of $X_0$ with $\widehat{A^G}$, it follows that $\widehat{\operatorname{T}}$ is a $\Pi_1^0$ set. Therefore $\operatorname{T}$ is an effective subshift. 

        We now prove that, if $X$ is an effective subshift on $H$, then $\operatorname{T}[X]$ is an effective subshift on $G$. For this purpose, let $\Psi\colon (A\times B)^{F(R)}\to A^{F(S)}$ be the computable function defined in the proof of \Cref{prop:medvedev-degrees-and-translation-like-actions}. We also consider the computable function $\Omega\colon (A\times B)^{F(R)}\to B^{F(R)}$ that removes the $A$ component of the alphabet. It follows from the definitions that 
        \[\widehat{\operatorname{T}[X]}=\Omega^{-1}(\operatorname{T})\cap \Psi^{-1}(X).\]

        As the preimage of an $\Pi_1^0$ set by a computable function is a $\Pi_1^0$ set, this proves that $\widehat{T[X]}$ is $\Pi_1^0$. 
    \end{proof}

    A consequence of these results and a theorem of Seward which ensures the existence of a translation-like action of $\ZZ$ on any infinite and finitely generated group~\cite[Theorem 1.4]{seward_Burnside_2014} is the following.
    \begin{corollary}\label{cor:all_infinite_rp_groups_havenonzero_effective_medvedev_degree}
        Every infinite, finitely generated and recursively presented group admits a (nonempty) effective subshift which achieves the maximal $\Pi_1^0$ Medvedev degree.
    \end{corollary}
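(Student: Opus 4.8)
The plan is to reduce the statement to the case of the group $\ZZ$ by means of a translation-like action, and then transfer an extremal effective $\ZZ$-subshift up to $G$ through the bounded-actions machinery. Concretely, I would fix an infinite, finitely generated and recursively presented group $G$ and apply Seward's theorem \cite[Theorem 1.4]{seward_Burnside_2014}, which produces a translation-like (i.e.\ bounded and free) action $\ast$ of $\ZZ$ on $G$. Then I would fix the symmetric generating set $S=\{1,-1\}$ of $\ZZ$ together with a finite set $F\subset G$ witnessing boundedness (so that $g^{-1}(g\ast s)\in F$ for all $g\in G$ and $s\in S$), and let $\operatorname{T}\subset B^G$ be the associated subshift of bounded actions. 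The action $\ast$ itself defines a point of $\operatorname{T}$ via $y_0(g)(s)=g^{-1}(g\ast s)$, so $\operatorname{T}\neq\varnothing$.

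Next I would invoke Miller's characterization \cite{miller_Two_2012}: the class $\meff{\ZZ}$ is exactly the class of $\Pi_1^0$ Medvedev degrees. In particular the maximal $\Pi_1^0$ degree, namely the degree of the class of PA-complete sets, is realized by some nonempty effective $\ZZ$-subshift $X$. I would then form the $G$-subshift $\operatorname{T}[X]\subset(A\times B)^G$ and check directly that it is nonempty: since $\ast$ is free, the $\ZZ$-orbits of $G$ under $\ast$ are all bi-infinite and each is in bijection with $\ZZ$, so picking one representative per orbit and copying a fixed configuration $z\in X$ along each orbit yields, together with $y_0$, a valid point of $\operatorname{T}[X]$ (here shift-invariance of $X$ is what guarantees that the restriction to \emph{every} orbit, not just the chosen representatives, lies in $X$).

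Finally I would pin down $m(\operatorname{T}[X])$ from both sides. On the one hand, \Cref{prop:medvedev-degrees-and-translation-like-actions}(1) gives $m(\operatorname{T}[X])\geq m(\operatorname{T})\vee m(X)\geq m(X)$, which is the maximal $\Pi_1^0$ degree. On the other hand, $G$ and $\ZZ$ are finitely generated and recursively presented and $\ast$ is translation-like, so \Cref{prop:translation-like-action-subshift-is-effective}(2) shows that $\operatorname{T}[X]$ is an effective $G$-subshift; since $G$ is recursively presented, \Cref{prop:msft-and-msoff-are-contained-in-Pi-1-degrees} forces $m(\operatorname{T}[X])$ to be a $\Pi_1^0$ degree, hence at most the maximal one. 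The two inequalities collapse to an equality, which is exactly the assertion.

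I do not expect a genuinely hard step here: essentially all the weight is carried by Seward's theorem and by the two earlier propositions on bounded actions. The only points that need a little care are (i) citing Miller's result in the sharp form that yields an effective $\ZZ$-subshift attaining the \emph{top} $\Pi_1^0$ degree, rather than merely a nonzero one, and (ii) the verification that $\operatorname{T}[X]\neq\varnothing$, which genuinely uses freeness of the translation-like action.
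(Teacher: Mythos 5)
Your proposal is correct and follows essentially the same route as the paper: Seward's theorem to get a translation-like $\ZZ$-action, Miller's effective $\ZZ$-subshift of maximal $\Pi_1^0$ degree, the lower bound from \Cref{prop:medvedev-degrees-and-translation-like-actions}, effectiveness of $\operatorname{T}[X]$ from \Cref{prop:translation-like-action-subshift-is-effective}, and the $\Pi_1^0$ upper bound from \Cref{prop:msft-and-msoff-are-contained-in-Pi-1-degrees} to force equality. Your explicit verification that $\operatorname{T}[X]\neq\varnothing$ via freeness and shift-invariance is a detail the paper leaves implicit, but it is the same argument.
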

    \begin{proof}
        Let $G$ be an infinite, finitely generated and recursively presented group, and let $H=\ZZ$. Let $X$ be a nonempty effective $\ZZ$-subshift with maximal $\Pi_1^0$ Medvedev degree, whose existence was proven by Miller in~\cite{miller_Two_2012}. By a result of Seward~\cite[Theorem 1.4]{seward_Burnside_2014}, $G$ admits a translation-like action by $\ZZ$, so (for appropriate parameters) the subshift $\operatorname{T}[X]$ is nonempty. By~\Cref{prop:translation-like-action-subshift-is-effective}, the subshift $\operatorname{T}[X]$ is effective. Finally, by \Cref{prop:medvedev-degrees-and-translation-like-actions}, we have $m(T[X])\geq m(X)$. As $G$ is recursively presented, it follows that $m(T[X])$ is a $\Pi_1^0$ degree and thus we must have $m(T[X])=m(X)$.
    \end{proof}

    \subsection{Quasi-isometries}

    Let $(X,d_X), (Y,d_Y)$ be two metric spaces. A map $f\colon X\to Y$ is a \define{quasi-isometry} if there exists a constant $C\geq 1$ such that
\begin{enumerate}
    \item $f$ is a \define{quasi-isometric embedding}: for all $x,x'\in X$
    \[\frac{1}{C}d_X(x,x') - C \leq d_Y(f(x), f(x'))\leq C d_X(x,x') + C,\]
    \item $f$ is \define{relatively dense}: for all $y\in Y$ there exits $x\in X$ such that $d_Y(y, f(x))\leq C$.
\end{enumerate}

    Two metric spaces are called \define{quasi-isometric} if there is a quasi-isometry between them. Given a quasi-isometry $f\colon X \to Y$, a map $g\colon Y \to X$ is called a \define{coarse inverse} of $f$ if the composition $g\circ f$ is uniformly close to the identity map $\operatorname{Id}_X$. Coarse inverses always exist, and every coarse inverse is in particular a quasi isometry, thus being quasi-isometric is an equivalence relation (see for instance~\cite{drutu_Geometric_2018}).

    Two finitely generated groups $G, H$ are \define{quasi-isometric} if there exists a quasi-isometry between $(G,d_G)$ and $(H,d_H)$ for some choice of word metrics $d_G,d_H$. It is clear that if a map is a quasi-isometry for fixed word metrics, it is also a quasi-isometry for any other choice of word metric (up to modification of the constant $C$). We also remark that if a quasi-isometry between two finitely generated groups exists, then it is necessarily bounded-to-1.

    A well-known construction of Cohen~\cite{cohen_large_2017} shows that if one is given two finitely generated groups $G,H$ and a positive integer $N$, then one can construct a subshift $\operatorname{QI}$ on $G$ whose elements encode all quasi-isometries $f\colon H \to G$ which satisfy that $|\{f^{-1}(g)\}| \leq N$ for every $g \in G$. In particular, if $H$ is quasi-isometric to $G$, this subshift $\operatorname{QI}$ is nonempty for large enough $N$. Furthermore, if one of the groups (equivalently, both\footnote{As being finitely presented is an invariant of quasi-isometry, see~\cite{drutu_Geometric_2018}}) are finitely presented then $\operatorname{QI}$ turns out to be an SFT. The main observation is that in this case, for any subshift of finite type $X\subset A^H$, we can enrich $\operatorname{QI}$ with the alphabet of $X$ and add extra local rules to obtain a $G$-SFT $\operatorname{QI}[X]$ which satisfies the property that in every configuration the encoded copy of $H$ is overlaid with a configuration $x \in X$. 

    The explicit construction of Cohen involves several details which are too lengthy to include in this article, so we will just provide a quick overview of an equivalent formulation. 
    
    Let $S,T$ be finite symmetric sets of generators which contain the identity for $G$ and $H$ respectively. Denote $d_G$ and $d_H$ the words metrics in $G$ and $H$ with respect to these generators. Let $f\colon H \to G$ be a quasi-isometry with constant $C\in \NN$ and suppose that $\max_{g \in G}|f^{-1}(g)| = N$. Let $I = \{1,\dots,N\}$. 
    
    The subshift $\operatorname{QI}$ is defined over the alphabet $B$ whose symbols have the form $b=\left(b_1, \dots,b_N\right)$ and each $b_i$ is either a map $b_i \colon T \to S^{2C}\times I$ or the symbol $\ast$. For $q\in \operatorname{QI}$, $g \in G$ and $i \in I$, let us denote $q(g,i)$ the $i$-th coordinate of $q(g) \in B$. 

    The intuition is the following: if $q \in \operatorname{QI}$, then for every $g \in G$ the tuple $q(g) = (q(g,1),\dots,q(g,N)) \in B$ will encode the set $f^{-1}(g)$ for some quasi-isometry $f$. As every $g \in G$ can have between $0$ and $N$ preimages, each $q(g,i)$ will either encode one of them (and be a map $q(g,i) \colon T \to S^{2C}\times I$) or encode no preimage ($q(g,i) = \ast$). 

    The maps encode only local information about $h \in f^{-1}(g)$. Namely, for every $t \in T$, $q(g,i)(t)$ encodes $f(h)^{-1}f(ht)$ (which by the upper bound on the quasi-isometry, can be represented by a word in $S$ of length at most $2C$) and the index $j \in I$ such that $q(f(ht),j)$ is encoding $ht$.

    The forbidden patterns that define $\operatorname{QI}$ can be described informally with rules as follows: 
    \begin{enumerate}
        \item  The relative dense condition in $\operatorname{QI}$ is encoded by imposing that in every ball of length $C$ in $G$ there is at least some element which has a non-$\ast$ coordinate.
        \item The directions from non-$\ast$ elements must lead to non-$\ast$ elements. More precisely, if $q(g,i)\neq \ast$, then for every $t \in T$ if $q(g,i)(t)=(w,j)$, then $q(g\underline{w},j)\neq \ast$.  
        \item The lower bound in the quasi-isometry is encoded by asking that for any two elements of $G$ with a non-$\ast$ coordinate at distance at most $2C$, there is a path of non-$\ast$ elements of bounded length linking them.
        \item The relations of $H$ are encoded by enforcing that paths associated to words $T^*$ which represent the identity are cycles. This last rule is the only one that requires the groups to be finitely presented in order to obtain an SFT.
    \end{enumerate} 

    Recall that for a word $w \in S^*$ we denote by $\underline{w}$ its corresponding element of $G$. Let $q \in \operatorname{QI}$, by the first rule of $\operatorname{QI}$, there exists $u \in S^*$ with $|u|\leq C$ and $i \in I$ such that $q(\underline{u},i)\neq \ast$. Let $(u,i)$ be lexicographically minimal among all such pairs. Consider the map $\kappa \colon T^* \to S^* \times I$ defined inductively as follows. Fix $\kappa(\epsilon) = (u,i)$. Suppose for some $w\in T^*$ we have $\kappa(w) = (u',i')$ and $q(\underline{u'},i')\neq \ast$. For every $t \in T$, we compute $q(\underline{u'},i')(t) = (v,j)$ and set $\kappa(wt) = (u'v,j)$. Notice that by the second rule $q(\underline{u'v},j) \neq \ast$. Clearly the map $\kappa$ can be computed from a description of $q$.

    The rules for $\operatorname{QI}$ ensure the following two properties
    \begin{enumerate}
        \item The third rule of $\operatorname{QI}$ ensures that the range of $\kappa$ spans the set of all $(g,i)\in G \times I$ for which $q(g,i) \neq \ast$, namely, \[ \{(g,i)\in G \times I : q(g,i) \neq \ast \} = \{ (\underline{u},i)\in G \times I : \kappa(w) = (u,i) \mbox{ for some } w \in T^*\}.      \]
        \item The fourth rule ensures that the map $\kappa \colon T^* \to S^* \times I$ is well defined on $H$, meaning that if $w,w'$ are two words in $T^*$ which represent the same element in $H$, then $\kappa(w)$ and $\kappa(w')$ represent the same pair of elements in $G \times I$. 
    \end{enumerate}

    Given an alphabet $A$ we can enrich the alphabet $B$ by replacing every non-$\ast$ coordinate by pairs of the form $(a,b_i)$ for some $a \in A$. This way from every configuration $\widetilde{q}$ with this enriched alphabet, we can obtain $\widehat{x}\colon T^*\to A$ by setting for $w \in T^*$, $\widehat{x}(w)$ as the $a$-th coordinate of $\widetilde{q}(\kappa(w))$. Furthermore, by the first property of $\kappa$, the map $\widehat{x}$ induces a configuration $x \in A^H$. 
    
    For a subshift $X\subset A^H$, we denote by $\operatorname{QI}[X]$ the subshift over the enriched alphabet where we forbid the occurrence of forbidden patterns of $X$ in the induced configurations. The fundamental property proven by Cohen in~\cite{cohen_large_2017} is that if the groups are finitely presented and $X$ is an $H$-SFT, then $\operatorname{QI}[X]$ is a $G$-SFT.

    \begin{lemma}\label{lem:QI_medvedev}
        Let $G,H$ be finitely generated groups with $G$ recursively presented. Let $X\subset A^H$ be a subshift. We have \[m(\operatorname{QI}[X]) = m(\operatorname{QI})\vee m(X).\]
    \end{lemma}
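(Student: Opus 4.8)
The plan is to establish the equality $m(\operatorname{QI}[X]) = m(\operatorname{QI}) \vee m(X)$ by proving the two inequalities separately, in close analogy with the treatment of $\operatorname{T}[X]$ in~\Cref{prop:medvedev-degrees-and-translation-like-actions}. As with that proposition, I would first dispose of the trivial case: if $\operatorname{QI}[X]$ is empty then $m(\operatorname{QI}[X]) = 1_\M$ is maximal and the inequality $\geq$ is automatic; so assume it is nonempty. For the inequality $m(\operatorname{QI}[X]) \geq m(\operatorname{QI}) \vee m(X)$, I would exhibit two computable maps from (the pullback of) $\operatorname{QI}[X]$ to (the pullbacks of) $\operatorname{QI}$ and $X$ respectively, and then invoke the lattice description of $\vee$ from~\Cref{section:Medvedev} together with the standard fact that a problem reducing to two problems reduces to their join. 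The map to $\operatorname{QI}$ is just the projection onto the $B$-component of the enriched alphabet, which is a sliding block code and hence computable by Curtis--Hedlund--Lyndon, giving $m(\operatorname{QI}[X]) \geq m(\operatorname{QI})$. The map to $X$ is the more substantial one: working on the free cover, given an enriched configuration $\widetilde q \in \widehat{\operatorname{QI}[X]} \subset (A\times B)^{F(S)}$, one first computes the lexicographically minimal pair $(u,i)$ with $q(\underline u, i) \neq \ast$ (this uses only that $G$ is recursively presented, so that one can search through words of $S$-length at most $C$ and test which encode the same group element), then computes the map $\kappa \colon T^* \to S^* \times I$ inductively as described before the lemma, and finally reads off $\widehat x(w)$ as the $A$-coordinate of $\widetilde q(\kappa(w))$. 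Since $\kappa$ is computable from $\widetilde q$ (as noted in the excerpt) and $G$ is recursively presented, this whole procedure is a computable map $\widehat{\operatorname{QI}[X]} \to \widehat X \subset A^{F(T)}$, yielding $m(\operatorname{QI}[X]) \geq m(X)$. Combining, $m(\operatorname{QI}[X]) \geq m(\operatorname{QI}) \vee m(X)$.

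For the reverse inequality $m(\operatorname{QI}) \vee m(X) \geq m(\operatorname{QI}[X])$, the idea is that an element of $\operatorname{QI}[X]$ is determined by a pair consisting of an element $q$ of $\operatorname{QI}$ and an element $x$ of $X$: the $B$-skeleton is $q$, and on the non-$\ast$ coordinates we must paint the $A$-values dictated by $x$ via the correspondence $\kappa$. Concretely, given $q \in \widehat{\operatorname{QI}}$ (from which $\kappa$ is computable) and $x \in \widehat X$, one builds the enriched configuration $\widetilde q$ by: keeping $\ast$ where $q$ has $\ast$; and at each $(g,i)$ with $q(g,i) \neq \ast$, by the first property of $\kappa$ there is some $w \in T^*$ with $\kappa(w) = (u,i)$, $\underline u = g$, and one sets the $A$-coordinate there to be $\widehat x(w)$ — well-definedness is exactly the second property of $\kappa$ (words representing the same element of $H$ give the same pair). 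This map $(q,x) \mapsto \widetilde q$ is computable using that $G$ is recursively presented, and it sends $\widehat{\operatorname{QI}} \times \widehat X$ (more precisely the join-coded version in $\{0,1\}^\NN$) into $\widehat{\operatorname{QI}[X]}$, because the defining forbidden patterns of $\operatorname{QI}[X]$ are precisely those of $\operatorname{QI}$ together with the pulled-back forbidden patterns of $X$ on induced configurations. Hence $m(\operatorname{QI}[X]) \leq m(\operatorname{QI}) \vee m(X)$, completing the proof.

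I expect the main obstacle to be bookkeeping rather than conceptual: one must be careful to phrase all constructions on the free covers $F(S)$ and $F(T)$ rather than on $G$ and $H$ themselves (as the Medvedev degree is defined via the pullback), and one must verify that the search steps — finding the minimal $(u,i)$, and, in the reverse direction, finding for each non-$\ast$ position some $w \in T^*$ realizing it through $\kappa$ — genuinely terminate and are computable, which rests on recursive presentability of $G$ and on the first property of $\kappa$ guaranteeing that every non-$\ast$ position is in the range of $\kappa$. A secondary subtlety is matching the informal "rules 1--4" defining $\operatorname{QI}$ and the enriched forbidden patterns of $\operatorname{QI}[X]$ precisely enough to see that the image of the reverse map lands in $\widehat{\operatorname{QI}[X]}$ and that the forward map's target is $\widehat X$; here I would lean on the properties of $\kappa$ already recorded in the excerpt and on Cohen's construction cited there, rather than re-deriving them.
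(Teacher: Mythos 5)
Your proposal is correct and follows essentially the same route as the paper's proof: projection plus the computable map $\kappa$ for the inequality $m(\operatorname{QI}[X]) \geq m(\operatorname{QI})\vee m(X)$, and, for the converse, painting the non-$\ast$ coordinates of $q$ with values of $x$ by searching (via the recursively enumerable word problem of $G$ and the surjectivity property of $\kappa$) for a word $w\in T^*$ realizing each non-$\ast$ position. The only quibble is that locating the lexicographically minimal starting pair $(u,i)$ needs no appeal to recursive presentability, since on the pullback one simply reads $\widehat q$ at the finitely many words of length at most $C$.
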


    \begin{proof}
        Let $\widehat{q}\in \widehat{\operatorname{QI}[X]}$. It is clear that one can compute an element of $\widehat{\operatorname{QI}}$ from $\widehat{q}$ by just erasing the alphabet coordinate. As the map $\kappa$ defined above is computable from $\widehat{q}$, we may compute $x \in \widehat{X}$ from this map and $\widehat{q}$. Thus we have that $m(\operatorname{QI}[X]) \geq m(\operatorname{QI}) \vee m(X)$.

        Conversely, let $q \in \widehat{\operatorname{QI}}$ and $x \in \widehat{X}$. We can compute $\kappa$ from $q$ as before. For $u \in S^*$ and $i \in I$, if $q(u,i)= \ast$, set $\widehat{q}(u,i) = \ast$. Otherwise, we know that there must exist $v \in T^*$ such that $\kappa(v) = (u',i)$ with $\underline{u}=\underline{u'}$. Thus if we run the algorithm that enumerates the word problem on $u'u^{-1}$ on every pair $(u',i)$ which occurs in the range of $\kappa$ this algorithm finishes. Set $\widehat{q}(u,i) = (x(v),q(u,i))$. With this it is clear that $\widehat{q}\in \operatorname{QI}[X]$ and thus $m(\operatorname{QI})\vee m(X) \geq m(\operatorname{QI}[X])$.
    \end{proof}

    \begin{remark}\label{admitir-sft-incalculable-es-invariante-de-qi}
        It follows from this result that for finitely presented groups, admitting an SFT with nonzero Medvedev degree is an invariant of quasi-isometry. We also mention that in the previous proof, the direction $m(\operatorname{QI}[X]) \geq m(\operatorname{QI})\vee m(X)$ does not require the groups to be recursively presented. 
    \end{remark}

    \begin{proposition}\label{prop:QI-computableinverse}
        Suppose $G,H$ are finitely generated groups with decidable word problem. If there exists a computable quasi-isometry $f\colon H \to G$, then there exists a computable coarse inverse $g\colon G \to H$.
    \end{proposition}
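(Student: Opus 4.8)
The plan is to realize $g$ via the standard construction of a coarse inverse by relative density, and to check that the selections this construction requires are effective given the decidable word problem of $G$. I would fix finite symmetric generating sets $S\subset G$ and $T\subset H$, a quasi-isometry constant $C\in\NN$ for $f$, and a total computable realizer $\widehat f\colon T^*\to S^*$ of $f$. The first thing to record is that, for words $u\in S^*$ and $v\in T^*$, the relation $d_G(\underline u, f(\underline v))\leq C$ is decidable: it holds precisely when one of the finitely many words $u^{-1}\widehat f(v)w^{-1}$ with $w\in S^*$, $|w|\leq C$, lies in $\texttt{WP}_S(G)$, which we can test because $G$ has decidable word problem.

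Next I would describe the realizer $\widehat g\colon S^*\to T^*$: on input $u$, enumerate all words $v\in T^*$ in a fixed effective order and output the first one satisfying $d_G(\underline u, f(\underline v))\leq C$. This search terminates because $f$ is relatively dense, so some $h\in H$ has $d_G(\underline u, f(h))\leq C$, and any word representing such an $h$ witnesses halting. I would then note that, for a fixed $v$, the test depends only on the element $\underline u\in G$ (membership in $\texttt{WP}_S(G)$ is unchanged when a word is replaced by an equivalent one), so the output element $\underline{\widehat g(u)}\in H$ depends only on $\underline u$; hence $g(x):=\underline{\widehat g(u)}$ for any $u$ with $\underline u=x$ is a well-defined map $G\to H$, computable by construction.

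Finally I would verify that $g$ is a coarse inverse of $f$. By construction $d_G(x, f(g(x)))\leq C$ for all $x\in G$, i.e.\ $f\circ g$ is uniformly $C$-close to $\operatorname{Id}_G$. For $h\in H$, writing $h'=g(f(h))$ we get $d_G(f(h'),f(h))\leq C$, and the lower quasi-isometry bound $\tfrac1C d_H(h',h)-C\leq d_G(f(h'),f(h))$ then gives $d_H(h',h)\leq 2C^2$. Thus $g\circ f$ is uniformly $2C^2$-close to $\operatorname{Id}_H$, which is exactly the definition of a coarse inverse of $f$.

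I do not expect a serious obstacle here; the key steps are the decidability of the predicate $d_G(\underline u, f(\underline v))\leq C$, the termination of the search via relative density, and the elementary computation above with the quasi-isometry constant. The one point that genuinely needs care is that the word-level algorithm $\widehat g$ must descend to an honest function on $G$, which is handled by observing that its behaviour depends on the input word only through the group element it represents. (It is also worth noting that only the decidable word problem of $G$, and not that of $H$, is used in this argument.)
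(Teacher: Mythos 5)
Your proof is correct and follows essentially the same route as the paper's: on input a word $u\in S^*$, search over words $v\in T^*$ for the first one with $d_G(\underline u, f(\underline v))\leq C$, a predicate decided via the word problem of $G$, with termination guaranteed by relative density. The only differences are cosmetic: you avoid the paper's preliminary normalization of the input word by observing that the predicate depends only on $\underline u$, and you additionally verify that $g\circ f$ is uniformly close to $\operatorname{Id}_H$ (which is what the paper's stated definition of coarse inverse literally asks for), so if anything your write-up is slightly more complete.
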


       \begin{proof}
        Fix finite sets of generators $S,T$ which induce metrics $d_G$ and $d_H$ for $G$ and $H$ respectively. Let $f \colon H \to G$ be a computable quasi-isometry with constant $C>0$ and $\widehat{f}\colon T^* \to S^*$ its computable representative. We construct $\widehat{g}\colon S^* \to T^*$ through the following algorithm.

        Given $w \in S^*$, run the algorithm which decides the word problem on $uw^{-1}$ for all words $u$ of length at most $|w|$ and replace $w$ by the lexicographically minimal $u$ which is accepted. In increasing lexicographical order, compute $d_G(u,\widehat{f}(v))$ for $v \in T^*$. Return $\widehat{g}(w) = v$ where $v$ is the first word for which $d(u,\widehat{f}(v)) \leq C$.

        As $f$ is relatively dense, this algorithm is guaranteed to stop on every input, and thus $\widehat{g}$ is a total computable map. Furthermore, the first step ensures that for two words $w,w'$ which represent the same element of $G$, then $\widehat{g}(w) = \widehat{g}(w')$ and thus it represents a map $g \colon G \to H$.

        Finally, for any $t\in G$, we have $d_G(f\circ g(t),t) \leq C$ and thus $d_{\infty}(f\circ g,\operatorname{Id}_G)$ is finite. This implies that $g$ is a coarse inverse of $f$ and thus a quasi-isometry. \end{proof}

    \begin{definition}
        We say that two finitely generated groups are \define{computably quasi-isometric} if there exists a computable quasi-isometry between them.
    \end{definition}

    \begin{remark}
        For groups with decidable word problem, we have shown that any computable quasi-isometry $f\colon G \to H$ admits a computable quasi-isometry inverse. With a dovetailing argument, the proof of~\Cref{prop:QI-computableinverse} can be generalized to the case where $G$ is recursively presented and $H$ has decidable word problem. We do not know if the result is still true if both groups are merely recursively presented.
    \end{remark}

    By~\Cref{prop:QI-computableinverse}, it follows that being computably quasi-isometric is an equivalence relation for groups with decidable word problem. Putting~\Cref{lem:QI_medvedev} with the result of Cohen stating that for finitely presented groups and $X$ an SFT then $\operatorname{QI}[X]$ is an SFT, we obtain the following:
    
    \begin{corollary}\label{coro_QI}
        Let $G,H$ be finitely presented and computably quasi-isometric groups with decidable word problem. Then $\msft{G}=\msft{H}$.
    \end{corollary}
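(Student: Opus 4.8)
The plan is to reduce everything to \Cref{lem:QI_medvedev} and Cohen's theorem that $\operatorname{QI}[X]$ is a $G$-SFT when $G,H$ are finitely presented and $X$ is an $H$-SFT. First I would observe that it suffices to prove the inclusion $\msft{H}\subseteq\msft{G}$: by \Cref{prop:QI-computableinverse} a computable quasi-isometry $f\colon H\to G$ between groups with decidable word problem admits a computable coarse inverse, so being computably quasi-isometric is a symmetric (indeed equivalence) relation in this setting, and the reverse inclusion follows by interchanging the roles of $G$ and $H$.

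Next, fix a nonempty $H$-SFT $X$ and a computable quasi-isometry $f\colon H\to G$ with integer constant $C$. Since any quasi-isometry between finitely generated groups is bounded-to-$1$, set $N=\max_{g\in G}|f^{-1}(g)|$ and let $\operatorname{QI}$ be the $G$-subshift built from the parameters described above (using $C$ and $N$). Because $f$ is encoded by $\operatorname{QI}$ this subshift is nonempty, and because both groups are finitely presented, $\operatorname{QI}[X]$ is a $G$-SFT, so $m(\operatorname{QI}[X])\in\msft{G}$. Applying \Cref{lem:QI_medvedev} (valid since $G$ is recursively presented) gives
\[m(\operatorname{QI}[X])=m(\operatorname{QI})\vee m(X).\]
Hence the corollary will follow once I show $m(\operatorname{QI})=0_\M$, i.e.\ that $\operatorname{QI}$ has a computable point.

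The heart of the argument is to extract a computable configuration $q\in\operatorname{QI}$ from the computable quasi-isometry $f$. Given $g\in G$, the quasi-isometric embedding inequality forces every $h\in f^{-1}(g)$ to satisfy $d_H(1_H,h)\le C\,d_G(f(1_H),g)+C^2$, so $f^{-1}(g)$ lies in a ball of $H$ whose radius is computable from any word representing $g$ (using that $G$ has decidable word problem). Enumerating words for the elements of that ball, applying the computable representative $\widehat f$, and using the decidable word problem of $G$ to retain exactly the representatives mapping to $g$, one computes $f^{-1}(g)$ uniformly in $g$; a canonical ordering of its elements then furnishes the index assignment in $I=\{1,\dots,N\}$. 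The remaining data stored in $q(g,i)$ — for each $t\in T$, a length-$\le 2C$ word over $S$ representing $f(h)^{-1}f(ht)$ together with the index of $ht$ inside $f^{-1}(f(ht))$ — is likewise computable from $\widehat f$. Since $q$ literally records the genuine quasi-isometry $f$, it satisfies the four families of forbidden-pattern rules defining $\operatorname{QI}$, so $q\in\operatorname{QI}$; and since $G$ has decidable word problem, a computable configuration on $G$ corresponds to a computable point of the free-group cover. Thus $m(\operatorname{QI})=0_\M$, which yields $m(\operatorname{QI}[X])=m(X)\in\msft{G}$, hence $\msft{H}\subseteq\msft{G}$ and, by the symmetry noted above, $\msft{G}=\msft{H}$.

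The step I expect to be the main obstacle is precisely the effective description of $f^{-1}(g)$ and the accompanying index bookkeeping, namely verifying in full detail that the configuration built from $\widehat f$ is both computable and an honest element of $\operatorname{QI}$ in the sense of Cohen's local rules; once \Cref{lem:QI_medvedev} and Cohen's SFT statement are granted, the rest is routine.
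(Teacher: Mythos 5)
Your proposal is correct and follows essentially the same route as the paper: reduce to one inclusion via \Cref{prop:QI-computableinverse}, apply \Cref{lem:QI_medvedev} together with Cohen's theorem that $\operatorname{QI}[X]$ is a $G$-SFT, and conclude by showing $m(\operatorname{QI})=0_{\M}$ via a computable point built from the computable quasi-isometry. The paper leaves that last step as a one-line remark, whereas you spell out the effective computation of $f^{-1}(g)$; that added detail is consistent with what the paper intends.
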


    \begin{proof}
        As the groups have decidable word problem there are computable quasi-isometries on both directions and thus it suffices to show that $\msft{H}\subset \msft{G}$.

        Let $X$ be an $H$-subshift. From~\Cref{lem:QI_medvedev} we obtain that $m(\operatorname{QI}[X]) = m(\operatorname{QI})\vee m(X)$. Furthermore, as there is a computable quasi-isometry, one can use it to construct a computable element of $\operatorname{QI}$ and thus it follows that $m(\operatorname{QI}) = 0_{\M}$. We obtain that for any subshift $X\subset A^H$ we have $m(\operatorname{QI}[X]) = m(X)$. Finally, as $G,H$ are finitely presented, it follows that for any SFT $X\subset A^H$ then $\operatorname{QI}[X]$ is a $G$-SFT with $m(\operatorname{QI}[X]) = m(X)$, therefore $\msft{H}\subset \msft{G}$. 
    \end{proof}

    For the next lemma we will require a few concepts from computable analysis that extend our definitions on $\{0,1\}^{\NN}$ to metric spaces. A \define{computable metric space} is a triple $(X,d,\mathcal{S})$ where $(X,d)$ is a separable metric space and $\mathcal{S}=(x_i)_{i \in \NN}$ is a countable dense sequence for which there exists a total computable map $f\colon \NN^3 \to \QQ$ such that $f(i,j,k)=q$ with the property that $|d(x_i,x_j)-q|\leq 2^{-k}$. A basic ball is a set of the form $B(x_i,q) = \{ x \in X : d(x,x_i)<q\}$ for some $i \in \NN$ and $q \in \QQ$. A set $U\subset X$ is called \define{effectively open} or $\Sigma_1^0$ if it can be written as a recursively enumerable union of basic balls. A map between computable metric spaces is \define{computable} if there is an algorithm which on input $(i,q) \in \NN \times \QQ$, returns a description of the inverse of $B(x_i,q)$ as a $\Sigma_1^0$ set.

    The Cantor set $A^{\NN}$ with $\mathcal{S}$ a lexicographic enumeration of eventually constant sequences and $d(x,y) = 2^{-\inf\{ n \in \NN : x_n \neq y_n \}}$ is an example of a computable metric space. Other examples of metric spaces that can be made computable are the Baire space $\NN^{\NN}$, the euclidean space $\mathbb{R}^n$ and the hyperbolic space $\mathbb{H}^n$.
    
    \begin{observation}
        Contrary to what happens between discrete groups, for computable metric spaces $X,Y$ there may exist a computable quasi-isometry from $X$ to $Y$, but no computable quasi-isometry from $Y$ to $X$. For instance, the identity from $\ZZ$ to $\RR$ is a computable quasi-isometry, but there exist no computable quasi-isometry from $\RR$ to $\ZZ$, because computable maps are continuous and thus constant.
    \end{observation}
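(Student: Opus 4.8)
The plan is to establish the two halves of the example independently. First I would check that the inclusion $\iota\colon\ZZ\to\RR$ is a computable quasi-isometry, where $\ZZ$ carries the metric induced by $\RR$ (equivalently a word metric for the standard generating set) and $\RR$ carries its usual computable metric space structure, with dense sequence taken to be an enumeration of $\QQ$. The map $\iota$ is an isometric embedding, hence a quasi-isometric embedding with constant $C=1$, and it is relatively dense since every real lies within distance $\tfrac12\le 1$ of an integer; so $\iota$ is a quasi-isometry. For computability I would observe that a basic ball of $\RR$ has the form $B(p,q)$ with $p,q\in\QQ$, and that $\iota^{-1}(B(p,q))=\{n\in\ZZ:|n-p|<q\}$ is a finite subset of $\ZZ$ which can be listed explicitly from $(p,q)$; in particular it is effectively open in $\ZZ$, so $\iota$ is computable.

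Second, I would show that no computable map $f\colon\RR\to\ZZ$ can be a quasi-isometry. By the definition of computability for maps between computable metric spaces recalled in the excerpt, $f^{-1}(B(x_i,q))$ is effectively open, hence open, for every basic ball of $\ZZ$; since the basic balls form a basis of the topology of $\ZZ$, the map $f$ is continuous. As $\RR$ is connected and $\ZZ$ with the discrete metric is totally disconnected, $f$ must be constant, say $f\equiv n_0$. But then $f$ is not relatively dense: if $C$ is the constant from the definition of quasi-isometry, then for any integer $y$ with $|y-n_0|>C$ we have $d_{\ZZ}(y,f(x))=|y-n_0|>C$ for every $x\in\RR$, contradicting relative density. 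Hence no computable quasi-isometry $\RR\to\ZZ$ exists, whereas $\iota$ witnesses a computable quasi-isometry $\ZZ\to\RR$, which is exactly the asserted phenomenon.

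There is essentially no serious obstacle here; the mathematical content is elementary and the connectedness argument does all the work. The only points deserving care are bookkeeping about the computable-analysis formalism: choosing the dense sequences so that the inclusion is genuinely computable (taking $\QQ$ as the dense set of $\RR$ makes the preimage computation a finite explicit list), and invoking the standard fact — immediate from the definition given — that a computable map between computable metric spaces is continuous because preimages of basic balls are effectively open. Once these are in place the proof is complete.
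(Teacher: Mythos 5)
Your proposal is correct and follows exactly the argument the paper sketches in one line ("computable maps are continuous and thus constant"); you have merely filled in the routine details of both halves. Nothing further is needed.
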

    
    \begin{lemma}
           Let $(X,d,(x_i)_{i \in \NN})$ be a computable metric space and let $G,H$ be two finitely generated groups with decidable word problem. If there are computable quasi-isometries $f \colon G \to X$ and $h \colon H \to X$, then $G$ and $H$ are computably quasi-isometric.
    \end{lemma}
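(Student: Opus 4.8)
The plan is to build a computable quasi-isometry $\bar g\colon G\to H$ which, informally, is ``$h^{-1}\circ f$ up to bounded error''. Note that one cannot simply compose $f$ with a computable coarse inverse of $h$, since such an inverse need not exist (cf.\ the preceding observation), so the map must be produced directly. First I would record the setup: fix finite symmetric generating sets for $G$ and $H$, so that both groups are computable metric spaces with word metrics $d_G,d_H$ and with distinguished dense sequence a computable enumeration of their elements by canonical words (available because the word problems are decidable). Let $C\ge 1$ be a common quasi-isometry constant for $f$ and $h$. I would also invoke the standard fact that a computable map from a discrete computable metric space such as $G$ into $X$ gives an algorithm which, on input $g\in G$ and $k\in\NN$, outputs an index $j$ with $d_X(f(g),x_j)<2^{-k}$, and similarly for $h$; this is what lets us approximate the points $f(g)$ and $h(k)$ to any precision.

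Next I would define $\bar g$ algorithmically. On input a word $w$ over the generators of $G$, first normalise $w$ via the word problem of $G$ so that the rest of the computation depends only on the element $g$ represented. Then dovetail over all $k\in H$ and all precision levels $n$: for each pair $(k,n)$ compute $x_j,x_{j'}$ with $d_X(h(k),x_j)<2^{-n}$ and $d_X(f(g),x_{j'})<2^{-n}$, and use the approximate computability of $d_X$ to test whether this data certifies $d_X(h(k),f(g))<2C$. Output the first $k$ found, and declare it to be $\bar g(g)$. Since $h$ is relatively dense with constant $C$, there is $k_0\in H$ with $d_X(h(k_0),f(g))\le C$, and for large enough $n$ the approximations certify $d_X(h(k_0),f(g))<2C$; hence the search halts on every input, $\bar g$ is a total computable map, and $d_X(h(\bar g(g)),f(g))<2C$ for every $g\in G$. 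Because the input is normalised before the search, two words representing the same $g$ yield the same computation, so $\bar g$ is a well-defined (computable) map of groups $G\to H$.

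Finally I would check, by a purely metric argument using no computability, that $\bar g$ is a quasi-isometry. For the embedding inequality I would chain three comparisons: $d_X(f(g),f(g'))$ is comparable to $d_G(g,g')$ up to $C$ since $f$ is a quasi-isometric embedding; $d_X(h(\bar g(g)),h(\bar g(g')))$ differs from $d_X(f(g),f(g'))$ by at most $4C$ by the triangle inequality together with $d_X(h(\bar g(g)),f(g))<2C$; and $d_H(\bar g(g),\bar g(g'))$ is comparable to $d_X(h(\bar g(g)),h(\bar g(g')))$ up to $C$ since $h$ is a quasi-isometric embedding. Composing these yields $\tfrac{1}{C'}d_G(g,g')-C'\le d_H(\bar g(g),\bar g(g'))\le C'd_G(g,g')+C'$ for a suitable $C'$ depending only on $C$. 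For relative density, given $k\in H$ I would choose $g\in G$ with $d_X(f(g),h(k))\le C$ (possible because $f$ is relatively dense), so that $d_X(h(\bar g(g)),h(k))<3C$, and then the quasi-isometric embedding property of $h$ bounds $d_H(\bar g(g),k)$ independently of $k$. Hence $\bar g\colon G\to H$ is a computable quasi-isometry, and $G$ and $H$ are computably quasi-isometric.

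The main obstacle I anticipate is the effectiveness of the witness search in the second step: since $d_X$ is only computably approximable rather than exactly computable, this search must be phrased as a semi-decision that certifies the strict inequality $d_X(h(k),f(g))<2C$, and one must argue that the relative density of $h$ forces this semi-decision procedure to terminate on every input. The metric verification in the third step is routine once the quasi-isometry constants are tracked carefully, and it is there that one sees the convenience of having taken a single common constant $C$ for $f$ and $h$.
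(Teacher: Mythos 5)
Your proposal is correct and follows essentially the same strategy as the paper: use the word problem to normalise the input, effectively search (via the dense sequence and approximate distances) for an element of $H$ whose $h$-image lies within $O(C)$ of $f(g)$, argue termination from the relative density of $h$, and then verify the quasi-isometry property by a purely metric chaining argument. The only cosmetic difference is that the paper routes the search through preimages of basic balls $B(x_i,C)$ and phrases the final verification via a coarse inverse of $h$, whereas you dovetail directly over elements of $H$ and chain the inequalities explicitly.
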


    \begin{proof}
        Fix finite set of generators $S,T$ for $G$ and $H$ respectively. Let $\widehat{f} \colon S^* \to X$ and $\widehat{h}\colon T^* \to X$ be computable maps that represent the quasi-isometries $f$ and $g$. Let $C\in \NN$ be a quasi-isometry constant for both $f$ and $g$. We construct $\widehat{\psi}\colon S^* \to T^*$ through the following algorithm.

        Let $w \in S^*$. Using the algorithm which decides the word problem of $G$, replace $w$ for a lexicographically minimal word $u$ that represents the same element of $G$. Through a dovetailing procedure, compute approximations of the recursively open sets $\widehat{f}^{-1}(B(x_i,C))$. Let $i_{w}$ be the first index for which this procedure returns that $u \in \widehat{f}^{-1}(B(x_{i_w},C))$. Next, let $v\in T^*$ be the first word which appears in the effectively open enumeration of $\widehat{h}^{-1}(B(x_{i_w},C))$ and set $\widehat{\psi}(w) = v$.

        From this description it is clear that $\widehat{\psi}$ is total computable and represents a map $\psi\colon G \to H$. Furthermore, it satisfies the property that for any $g \in G$ \[d(f(g),h(\psi(g))\leq d(f(g),x_{i_w}) + d(x_{i_w},h(\psi(g))) \leq 2C. \] 
        Where in the above inequality $w$ is any word in $S^*$ which represents $G$. It follows that if $\overline{h}$ is a coarse inverse for $h$, then \[ d_{\infty}( \overline{h}\circ f , \psi) \mbox{ is finite}.  \]
        And therefore $\psi$ is a quasi-isometry from $G$ to $H$.
    \end{proof}

    \begin{observation}\label{obs:svarcmilnor}A well-known source of quasi-isometries between groups and metric spaces is the \v{S}varc–Milnor Lemma. Let us note that this result can also be used to obtain computable quasi-isometries. We briefly recall the statement, and refer the reader to \cite[Theorem 23]{laharpe_Topics_2000} for details. Let $X$ be a metric space which is geodesic and proper, and let $G\curvearrowright X$ be a group action by isometries, such that the action is proper and the quotient $X/G$ is compact. Then $G$ is a finitely generated group, and quasi-isometric to $X$. Indeed, for all $x\in X$ the orbit map $f\colon G \to X$ given by $f(g) = g\cdot x$ is a quasi-isometry.

    Now suppose $G\curvearrowright X$ satisfies the hypotheses in the \v{S}varc–Milnor Lemma, that $(X,d,\mathcal{S})$ is a computable metric space, and that for all $g$ the map which sends $x\in X$ to $g\cdot x$ is computable. Then the orbit map of a computable point $x\in X$ provides a computable quasi-isometry from $G$ to $X$.
    \end{observation}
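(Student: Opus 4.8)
The plan is to separate the geometric content, which is entirely classical, from the computability content, which is the only new point. First I would invoke the \v{S}varc--Milnor Lemma in the form cited in the excerpt (\cite[Theorem 23]{laharpe_Topics_2000}): since $X$ is geodesic and proper, $G$ acts on $X$ by isometries, properly, and with compact quotient, the group $G$ is finitely generated and, for any base point $x\in X$, the orbit map $f\colon G\to X$, $f(g)=g\cdot x$, is a quasi-isometry. Thus there is nothing left to prove about quasi-isometry constants; it only remains to check that, when $x$ is a computable point and each translation is computable, the orbit map is a \emph{computable} quasi-isometry, i.e.\ that it admits a computable representative $\widehat f\colon S^*\to X$ with $\widehat f(w)=f(\underline w)$, where $S$ is a fixed finite symmetric generating set of $G$.

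Next I would describe this representative explicitly. For a word $w=s_1s_2\cdots s_n\in S^*$ set $\widehat f(w)=s_1\cdot(s_2\cdot(\cdots(s_n\cdot x)))$. There are only finitely many generators, so by hypothesis we have, hard-coded, an algorithm for each of the finitely many maps $\tau_s\colon X\to X$, $y\mapsto s\cdot y$; each $\tau_s$ is an isometry of $X$, hence $1$-Lipschitz. Combining these with the fact that $x$ is a computable point of $(X,d,\mathcal S)$, the $n$-fold composition $\tau_{s_1}\circ\cdots\circ\tau_{s_n}$ can be evaluated to any prescribed precision uniformly in the input word: to approximate $\widehat f(w)$ within $2^{-k}$ one approximates $s_n\cdot x$ within $2^{-k-1}$, feeds that approximation to $\tau_{s_{n-1}}$ asking for precision $2^{-k-2}$, and so on, and since every $\tau_{s_i}$ is $1$-Lipschitz the accumulated error stays below $2^{-k}$. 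The number of compositions to perform, $n=|w|$, is part of the input, so the whole procedure is a single algorithm. Hence $\widehat f$ is a computable map $S^*\to X$.

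Finally I would note that $\widehat f$ genuinely represents $f$: because $G\curvearrowright X$ is an honest left action, the point $\underline w\cdot x$ depends only on the group element $\underline w$ and not on the word $w$, so if $\underline w=\underline{w'}$ then $\widehat f(w)=\widehat f(w')$ as points of $X$, and that common point equals $f(\underline w)$. This finishes the argument. The only delicate step is the error bookkeeping in the middle paragraph; it is precisely the isometric character of the action that makes it transparent, since without a computable modulus of continuity for the translation maps one could not a priori control how approximation errors propagate through the composition. (More abstractly, the same conclusion follows from the closure of computable points under effective composition with computable maps, but the isometry argument is the cleanest.)
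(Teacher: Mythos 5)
Your argument is correct and is exactly the routine verification the paper leaves implicit: the Observation is stated without proof, and the intended content is precisely that the orbit map $w=s_1\cdots s_n\mapsto s_1\cdot(\cdots(s_n\cdot x))$ is computable because the finitely many generator translations are computable isometries (hence with trivial modulus of continuity for the error propagation) and $x$ is a computable point, while well-definedness on group elements is automatic from the action axioms. Nothing is missing.
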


    %This observation will be used later  to show that many Fuchsian groups admit computable quasi-isometries to the hyperbolic plane. 
\section{Medvedev degrees of classes of subshifts on groups}\label{section:classification}

In these sections we will employ the machinery developed in this article to say as much as we can about the Medvedev degrees of subshifts in groups. 

\subsection{Known results}
        Here we review the state of the art of the classification of $\msft{G}$, $\msof{G}$ and $\meff{G}$ for different groups $G$. We begin by a rephrasing of Simpson's result in our language.
        \begin{theorem}[\cite{simpson_Medvedev_2014}]\label{simpson_result}
            Let $d\geq 2$. Then $\msft{\ZZ^d}=\msof{\ZZ^d}=\meff{\ZZ^d}$ equals the class of all $\Pi_1^0$ Medvedev degrees.
        \end{theorem}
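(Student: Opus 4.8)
The plan is to collapse a chain of inclusions so that all three classes coincide with the class of $\Pi_1^0$ Medvedev degrees, the only non-formal ingredient being that every $\Pi_1^0$ degree is realized by a $\ZZ^2$-SFT. First dispatch the cheap inclusions: every SFT is sofic, so $\msft{\ZZ^d}\subseteq\msof{\ZZ^d}$; since $\ZZ^d$ is recursively presented, \Cref{prop:sft-and-sofic-subshifts-are-effective} gives $\msof{\ZZ^d}\subseteq\meff{\ZZ^d}$; and \Cref{prop:msft-and-msoff-are-contained-in-Pi-1-degrees} gives $\meff{\ZZ^d}\subseteq\{\Pi_1^0\text{ degrees}\}$. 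Hence it suffices to show that every $\Pi_1^0$ Medvedev degree belongs to $\msft{\ZZ^d}$.

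Next I would reduce to the case $d=2$. If $X\subseteq A^{\ZZ^2}$ is a $\ZZ^2$-SFT and $\widetilde X\subseteq A^{\ZZ^d}$ denotes its free extension along the inclusion $\ZZ^2\leqslant\ZZ^d$, then $\widetilde X$ is again an SFT, and since $\ZZ^d$ is recursively presented while $\ZZ^2$ has decidable membership problem in $\ZZ^d$, \Cref{free-extensions} (case (1)) yields $m(\widetilde X)=m(X)$. Thus $\msft{\ZZ^2}\subseteq\msft{\ZZ^d}$, and it is enough to realize every $\Pi_1^0$ degree by a $\ZZ^2$-SFT.

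For this core statement I would argue that every nonempty $\Pi_1^0$ set $P\subseteq\{0,1\}^\NN$ is Medvedev equivalent to some $\ZZ^2$-SFT. By Miller's theorem \cite{miller_Two_2012} there is an effective $\ZZ$-subshift $Y$ with $m(Y)=m(P)$. Then one invokes a simulation theorem (Hochman; Aubrun--Sablik; Durand--Romashchenko--Shen) to produce a $\ZZ^2$-SFT $X$ that realizes $Y$: configurations of $X$ carry an aperiodic self-similar skeleton, forced by local rules, along which an embedded computation reads off, row by row, a configuration of $Y$ while locally certifying the constraints of $Y$. One inequality is soft: $X$ factors onto the free extension $\widetilde Y$ of $Y$ to $\ZZ^2$, so by \Cref{prop:basic-properties} and \Cref{free-extensions} we get $m(X)\geq m(\widetilde Y)=m(Y)=m(P)$. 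The reverse inequality $m(X)\leq m(P)$ is where the real work lies: from an arbitrary configuration of $X$ one must uniformly and computably extract a point of $Y$ (equivalently of $P$), using that the skeleton is visible in every configuration, contains arbitrarily large coherent blocks, and hence in the limit pins down a single infinite path encoding an element of $P$.

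I expect this degree-preservation analysis of the simulation construction — rather than the classical aperiodicity of the skeleton à la Robinson/Mozes, which is standard — to be the main obstacle; it is precisely the content of Simpson's argument in \cite{simpson_Medvedev_2014}, which one may instead simply cite to conclude that $\{\Pi_1^0\text{ degrees}\}\subseteq\msft{\ZZ^2}$. Combining this with the chain of inclusions above forces $\msft{\ZZ^d}=\msof{\ZZ^d}=\meff{\ZZ^d}=\{\Pi_1^0\text{ degrees}\}$ for all $d\geq 2$.
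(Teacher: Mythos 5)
This is a cited result, and the paper offers no proof of its own beyond attributing it to \cite{simpson_Medvedev_2014}; your treatment --- the soft inclusions $\msft{\ZZ^d}\subseteq\msof{\ZZ^d}\subseteq\meff{\ZZ^d}\subseteq\{\Pi_1^0\text{ degrees}\}$, the reduction to $d=2$ via free extensions, and citing Simpson for the realization of every $\Pi_1^0$ degree by a $\ZZ^2$-SFT --- is correct and amounts to the same thing (indeed Simpson's theorem is already stated for all $d\geq 2$, so the reduction step is not even needed). Your flagging of the direction $m(X)\leq m(P)$ as the genuinely hard, non-formal content of the Miller-plus-simulation route is accurate, and deferring that core to the citation is precisely what the paper does.
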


        See \cite{durand_Fixedpoint_2012} for a different proof. On the other hand, it is well known that every SFT and sofic subshift on $\ZZ$ has finite orbits, and thus zero Medvedev degree. While it is not true that every SFT in the free group admits a finite orbit (see~\cite{piantadosi_Symbolic_2008}), it still contains computable configurations.
        \begin{proposition}\label{prop:virtually_free_have_0_degree}
            For every f.g virtually free group $G$ we have $\msft{G}=\msof{G}= \{0_{\M}\}$.
        \end{proposition}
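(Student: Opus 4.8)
The plan is to reduce, via the commensurability invariance of \Cref{prop:commensurable_same_degrees}, to the case of a finitely generated free group, and then to exhibit a \emph{computable} configuration inside every nonempty free-group SFT by a greedy labelling of the Cayley tree; the sofic case then follows at once. The crux is an effective ``liveness'' analysis guaranteeing that the greedy procedure never stalls; this is in the spirit of Piantadosi's study of SFTs on free groups~\cite{piantadosi_Symbolic_2008}, but short enough to redo here.

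First I would reduce to a free group. Since $G$ is virtually free it has a finite-index free subgroup $H$; by Nielsen--Schreier $H$ is free, and being of finite index in a finitely generated group it is itself finitely generated, so $H\cong F_n$ for some $n\geq 0$. As $G$ and $F_n$ are commensurable, \Cref{prop:commensurable_same_degrees} gives $\msft{G}=\msft{F_n}$ and $\msof{G}=\msof{F_n}$. If $n=0$ then $G$ is finite and every nonempty $G$-SFT has a configuration, a finite object that is trivially computable, so $\msft{G}=\msof{G}=\{0_{\M}\}$; it thus remains to handle $G=F_n$ with $n\geq 1$.

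Next let $X\subseteq A^{F_n}$ be a nonempty SFT. By a standard recoding (which yields a conjugate SFT, hence the same Medvedev degree by \Cref{prop:basic-properties}; this step is routine but slightly fiddly on a tree) I may assume $X$ is nearest-neighbour: with $S$ the standard symmetric generating set there are relations $E_s\subseteq A\times A$, with $E_{s^{-1}}=\{(b,a):(a,b)\in E_s\}$, so that $x\in X$ iff $(x(g),x(gs))\in E_s$ for all $g\in F_n$ and $s\in S$. Since the Cayley graph of $F_n$ is a tree, call a pair $(a,s)\in A\times S$ \emph{live} if the subtree hanging below a vertex reached from its parent through $s$ (so the vertex carries value $a$ and is joined to its children by the generators $t\neq s^{-1}$) admits a valid labelling. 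Then $\mathrm{Live}$ is the greatest $R\subseteq A\times S$ with the property that whenever $(a,s)\in R$, then for each $t\in S\setminus\{s^{-1}\}$ there is $b$ with $(a,b)\in E_t$ and $(b,t)\in R$; as the greatest fixed point of a monotone operator on the finite power set of $A\times S$, it is computable, obtained by starting from $A\times S$ and iteratively deleting offending pairs until the process stabilises.

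Now I build the configuration. Nonemptiness of $X$ provides a \emph{root-admissible} symbol $a_0$, namely one such that for every $t\in S$ there is $b$ with $(a_0,b)\in E_t$ and $(b,t)\in\mathrm{Live}$; such an $a_0$ is found effectively. Define $x\colon F_n\to A$ by recursion along reduced words: $x(1_{F_n})=a_0$, and if $g=g't$ is reduced with $x(g')=a$ already defined, let $x(g)$ be the least $b$ with $(a,b)\in E_t$ and $(b,t)\in\mathrm{Live}$ --- this exists by root-admissibility when $g'=1_{F_n}$, and by liveness of $(x(g'),s)$ (with $s$ the last letter of $g'$, so that $t\neq s^{-1}$) otherwise. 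This recursion is plainly computable and, by the defining property of $\mathrm{Live}$, never stalls; using the symmetry between $E_s$ and $E_{s^{-1}}$ one checks that $(x(g),x(gs))\in E_s$ for all $g$ and $s$, so $x\in X$ and $m(X)=0_{\M}$. Hence $\msft{F_n}=\{0_{\M}\}$. Finally, for a nonempty sofic $Y\subseteq B^{F_n}$, take an SFT $X$ with a factor map $\phi\colon X\to Y$; then $m(Y)\leq m(X)=0_{\M}$ by \Cref{prop:basic-properties}, so $\msof{F_n}=\{0_{\M}\}$, and the proposition follows from the first step. The one place needing real care is checking that the greedy labelling lands in $X$, which is precisely what the greatest-fixed-point description of $\mathrm{Live}$ is set up to supply, so I do not anticipate a serious obstacle.
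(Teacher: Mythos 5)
Your proof is correct and follows essentially the same route as the paper: reduce to a finitely generated free group by commensurability, recode to a nearest-neighbour SFT, and greedily label the Cayley tree to produce a computable configuration, with the sofic case following since factors do not increase the degree. The only difference is bookkeeping: the paper simply uses the (finite, hence hard-codable) set $A'$ of symbols occurring in some configuration of $X$ and observes that every such symbol extends in every direction, whereas you compute an equivalent ``live'' set as a greatest fixed point --- a slightly more uniform but interchangeable device.
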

        \begin{proof}
            By~\Cref{prop:commensurable_same_degrees} it suffices to consider a finitely generated free group. Let $F(S)$ be freely generated by a finite symmetric set $S$, and let $X\subset A^{F(S)}$ be a nonempty SFT. As $m(X)$ is a conjugacy invariant (\Cref{prop:basic-properties}), without loss of generality we can assume that $X$ is determined by patterns of the form $p\colon \{1,s\}\to A$, for $s\in S$ (see for instance~\cite[Proposition 1.6]{barbierilemp_thesis}). Let $A'\subset A$ be the set of all symbols in $A$ that occur in some configuration in $X$. Observe that for every $a\in A'$ and $s\in S$, there exists $b\in A'$ such that $1\mapsto a$, $s\mapsto b$ is not a forbidden pattern.
            
            We define a computable configuration $x\colon F(S)\to A'$ inductively. Fix a total order on $A'$, and let $x(1_{F(S)})$ be the minimal element in $A'$. Now, let $n >1$ and assume that we have defined $x(h)$ for all $h$ with $|h|_S < n$, and that no forbidden pattern occurs there. For $g$ with $|g|_S=n$, there is a unique $h$ with $|h|_S=n-1$ and $s \in S$ with $hs=g$. We define $x(g)$ as the minimal element in $A'$ such that the pattern $1_G\mapsto x(h),s\mapsto x(g)$ is not forbidden. Clearly no forbidden patterns can be created by this procedure because the Cayley graph of $F(S)$ with respect to $S$ is a tree, and thus we obtain that $x \in X$. This procedure is computable, and thus $m(X)=0_{\M}$. By~\Cref{prop:basic-properties} every topological factor of $X$ has at most degree $m(X) = 0_{\M}$ and the claim for sofic subshifts follows.            
        \end{proof}
        
        Let us now consider effective subshifts. Miller proved that $\meff{\ZZ}$ equals the class of all $\Pi_1^0$ Medvedev degrees \cite{miller_Two_2012}. This was then generalized by the second author to all groups with decidable word problem. 
        \begin{theorem}[\cite{carrasco-vargas_geometric_2023_}]\label{teo:nicarrasco_eff_pi01}
            Let $G$ be a group that is infinite and has decidable word problem. Then $\meff{G}$ equals the class of $\Pi_1^0$ Medvedev degrees.
        \end{theorem}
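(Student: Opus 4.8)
The inclusion of $\meff{G}$ into the class of $\Pi_1^0$ degrees is immediate: a finitely generated group with decidable word problem is in particular recursively presented, so \Cref{prop:msft-and-msoff-are-contained-in-Pi-1-degrees} applies. The plan for the reverse inclusion is to transport Miller's theorem for $\ZZ$ to $G$ through a translation-like action, in the same spirit as the proof of \Cref{cor:all_infinite_rp_groups_havenonzero_effective_medvedev_degree}. Fix a nonempty $\Pi_1^0$ set $P\subseteq\{0,1\}^\NN$; by Miller's theorem \cite{miller_Two_2012} there is a nonempty effective $\ZZ$-subshift $X$ with $m(X)=m(P)$. Since $G$ is infinite and finitely generated, Seward's theorem \cite[Theorem 1.4]{seward_Burnside_2014} provides a translation-like action of $\ZZ$ on $G$; for suitable parameters $S,F$ the subshift of bounded actions $\operatorname{T}$ is nonempty, and overlaying a single point of $X$ along each $\ZZ$-orbit (using that $X$ is shift-invariant) shows $\operatorname{T}[X]$ is nonempty. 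By \Cref{prop:translation-like-action-subshift-is-effective}, $\operatorname{T}[X]$ is an effective $G$-subshift, and I would conclude $m(\operatorname{T}[X])=m(X)=m(P)$ by invoking the second item of \Cref{prop:medvedev-degrees-and-translation-like-actions} --- provided the translation-like action can be taken \emph{computable} and with \emph{decidable orbit membership problem}. Letting $P$ range over all nonempty $\Pi_1^0$ sets then puts every $\Pi_1^0$ degree in $\meff{G}$.

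The real content is therefore an effective strengthening of Seward's theorem: every infinite, finitely generated group with decidable word problem admits a translation-like action of $\ZZ$ that is computable and has decidable orbit membership problem (for some, equivalently any, finite symmetric generating set). I would obtain this by effectivizing Seward's construction: decidability of the word problem lets one algorithmically compute balls in the Cayley graph and decide adjacency, so Seward's recursion producing a bijection $\phi\colon G\to G$ with no periodic points and with $g^{-1}\phi(g)$ confined to a finite set --- equivalently a translation-like $\ZZ$-action --- can be carried out by a Turing machine. The extra ingredient is to run it so that the resulting partition of $G$ into $\ZZ$-orbits is algorithmically transparent: for instance, building the bi-infinite lines incrementally over a computable enumeration of $G$ so that an element, once placed on a line, is never re-routed to a different line, so that two elements share an orbit precisely when some finite stage places them on a common path-segment, while a decidable transversal is given by the least element of each line in that enumeration. (If one could always arrange a \emph{transitive} such action the orbit membership problem would be vacuous, but I do not expect this to be easier to establish.)

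I expect this key lemma to be essentially the whole difficulty. Granting it, the rest is a routine assembly of \Cref{prop:translation-like-action-subshift-is-effective}, \Cref{prop:medvedev-degrees-and-translation-like-actions}, Miller's theorem and \Cref{prop:msft-and-msoff-are-contained-in-Pi-1-degrees}; note that, unlike in \Cref{cor:all_infinite_rp_groups_havenonzero_effective_medvedev_degree}, one cannot replace the exact equality $m(\operatorname{T}[X])=m(X)$ by a ``squeeze between $m(X)$ and the maximal $\Pi_1^0$ degree'' argument, since here we target arbitrary (not just maximal) $\Pi_1^0$ degrees --- which is precisely why decidable orbit membership, and not merely recursive presentation, is needed.
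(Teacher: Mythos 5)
Your proposal is correct and follows essentially the same route as the cited source: the upper bound is \Cref{prop:msft-and-msoff-are-contained-in-Pi-1-degrees}, and the lower bound transports Miller's theorem through $\operatorname{T}[X]$ exactly via \Cref{prop:translation-like-action-subshift-is-effective} and the second item of \Cref{prop:medvedev-degrees-and-translation-like-actions}. The ``key lemma'' you isolate --- that every infinite, finitely generated group with decidable word problem admits a computable translation-like $\ZZ$-action with decidable orbit membership problem --- is precisely Theorem 1.7 of that same reference, quoted in this paper as \Cref{seward-calculable}, so your identification of where the real work lies is exactly right.
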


\subsection{Virtually polycyclic groups}

\begin{theorem}\label{thm:polycyclic}
    Let $G$ be a virtually polycyclic group. We have the following dichotomy
    \begin{enumerate}
        \item If $G$ is virtually cyclic, then $\msft{G} = \{0_\M\}$.
        \item If $G$ is not virtually cyclic, then $\msft{G}$ is the set of all $\Pi_1^0$ Medvedev degrees.
    \end{enumerate}
\end{theorem}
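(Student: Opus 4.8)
The plan is to treat the two cases separately, relying heavily on the transference machinery from Section~\ref{section:transference} to reduce everything to the already-known case $\ZZ^2$ (\Cref{simpson_result}) and the virtually-free case (\Cref{prop:virtually_free_have_0_degree}).

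For item (1): a virtually cyclic group is either finite or contains $\ZZ$ with finite index. If $G$ is finite, every $G$-subshift is a finite set of configurations, all of which are computable, so $m(X) = 0_\M$. If $G$ is infinite virtually cyclic, it is commensurable with $\ZZ$, hence by \Cref{prop:commensurable_same_degrees} we have $\msft{G} = \msft{\ZZ}$, and the latter is $\{0_\M\}$ since every nonempty $\ZZ$-SFT contains a periodic (hence computable) configuration. Alternatively one can simply note that an infinite virtually cyclic group is virtually free, so this follows directly from \Cref{prop:virtually_free_have_0_degree}.

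For item (2): first, since $G$ is recursively presented (polycyclic groups are finitely presented with decidable word problem, and virtually polycyclic groups inherit this), \Cref{prop:msft-and-msoff-are-contained-in-Pi-1-degrees} gives the inclusion $\msft{G} \subset \{\Pi_1^0 \text{ degrees}\}$. It remains to prove the reverse inclusion, i.e. that every $\Pi_1^0$ Medvedev degree is realized by some $G$-SFT. The key structural input is that an infinite virtually polycyclic group that is not virtually cyclic contains a subgroup isomorphic to $\ZZ^2$, and moreover — this is the crucial point — one can arrange for $\ZZ^2$ (or rather some finite-index subgroup of $G$ together with $\ZZ^2$) to interact via the tools we have. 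The cleanest route: a non-virtually-cyclic virtually polycyclic group has Hirsch length at least $2$, and by a result on polycyclic groups it has a finite-index subgroup $G'$ which is poly-$\ZZ$ and surjects onto $\ZZ^2$ with finitely generated kernel (one peels off the top two infinite-cyclic quotients). Then \Cref{prop:quotients_fg} gives $\msft{\ZZ^2} \subset \msft{G'}$, and \Cref{prop:commensurable_same_degrees} gives $\msft{G'} = \msft{G}$. Combined with \Cref{simpson_result}, which says $\msft{\ZZ^2}$ is exactly the class of $\Pi_1^0$ degrees, we get that $\msft{G}$ contains all $\Pi_1^0$ degrees, completing the proof.

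The main obstacle is the structural claim that a non-virtually-cyclic virtually polycyclic group admits a finite-index subgroup surjecting onto $\ZZ^2$ with finitely generated kernel. This requires care: one must pass to a finite-index poly-$\ZZ$ (poly-infinite-cyclic) subgroup $G'$ — which exists for any virtually polycyclic group — and then use the fact that $G'$ has a subnormal series with infinite cyclic quotients of length equal to the Hirsch length $h(G') = h(G) \geq 2$. The quotient of $G'$ by the appropriate term of its derived or lower central-type series, or simply the abelianization if it has rank $\geq 2$, need not work directly, so instead one should quotient by a term of the subnormal series: if $1 = N_0 \trianglelefteq N_1 \trianglelefteq \cdots \trianglelefteq N_k = G'$ with $N_i/N_{i-1} \cong \ZZ$, then $G'/N_{k-2}$ is poly-$\ZZ$ of Hirsch length $2$, hence virtually $\ZZ^2$ and itself not virtually cyclic; $N_{k-2}$ is finitely generated (being polycyclic). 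Then apply \Cref{prop:quotients_fg} to get $\msft{G'/N_{k-2}} \subset \msft{G'}$, and \Cref{prop:commensurable_same_degrees} twice (once to pass $G \leftrightarrow G'$, once because $G'/N_{k-2}$ is commensurable with $\ZZ^2$) to conclude. One should double check that $\msft{}$ being a commensurability invariant handles the ``virtually'' in ``virtually $\ZZ^2$'' correctly, which it does by \Cref{prop:commensurable_same_degrees}.
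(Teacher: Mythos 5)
Your proof is correct, but for the main inclusion in item (2) you take a genuinely different route from the paper. The paper simply cites the facts that a non-virtually-cyclic virtually polycyclic group contains a subgroup isomorphic to $\ZZ^2$ and that polycyclic groups have decidable membership problem (Malcev's theorem), and then applies \Cref{cor:subgroups_decidable_membership} (i.e., free extensions along a subgroup with decidable membership) to get $\msft{\ZZ^2}\subset\msft{G}$. You instead go through quotients: you pass to a finite-index poly-$\ZZ$ subgroup $G'$, peel off the top two infinite-cyclic factors of its subnormal series to exhibit a quotient $G'/N_{k-2}$ of Hirsch length $2$ with polycyclic (hence finitely generated) kernel, observe that this quotient is $\ZZ$-by-$\ZZ$ and therefore commensurable with $\ZZ^2$, and then chain \Cref{prop:quotients_fg} with two applications of \Cref{prop:commensurable_same_degrees}. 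Both arguments are sound and both ultimately reduce to \Cref{simpson_result} plus the $\Pi_1^0$ upper bound from \Cref{prop:msft-and-msoff-are-contained-in-Pi-1-degrees}. The trade-off: the paper's route leans on the (classical but nontrivial) decidability of the membership problem in polycyclic groups, whereas yours avoids any computability input beyond finite generation of the kernel, at the cost of a slightly more involved structural reduction (including handling the Klein-bottle-group case via commensurability, which you do correctly). Item (1) is handled identically in both.
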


\begin{proof}
    Let $G$ be virtually cyclic. If $G$ is finite it is clear that $\msft{G}=\{0_{\M}\}$. If $G$ is infinite, then again by~\Cref{prop:commensurable_same_degrees} it follows that $\msft{G}=\msft{\ZZ} = \{0_\M\}$.

    Finally, suppose $G$ is a polycyclic group which is not virtually cyclic. Then it follows that $\ZZ^2 \leqslant G$ (see for instance~\cite{Segal_1983}). It is well known that polycyclic groups are recursively presented and have decidable membership problem (see for instance~\cite[Corollary 3.6]{Baumslag1981}), therefore by~\Cref{cor:subgroups_decidable_membership} we obtain that $\msft{\ZZ^2}\subset \msft{G}$. By Simpson's result (\Cref{simpson_result}) $\msft{\ZZ^2}$ is the set of all $\Pi_1^0$ Medvedev degrees, so $\msft{G}$ contains all $\Pi_1^0$ degrees. Conversely, every SFT on $G$ has a $\Pi_1^0$ Medvedev degree because the word problem of $G$ is decidable (\Cref{prop:msft-and-msoff-are-contained-in-Pi-1-degrees}). \end{proof}

\subsection{Direct products of groups}

We will show that in any direct product of two infinite and finitely generated groups with decidable word problem, every $\Pi_1^0$ Medvedev degree arises as the degree of an SFT. In order to do this, we shall use the following result of the second author.

\begin{theorem}[Theorem 1.7 of~\cite{carrasco-vargas_geometric_2023_}]\label{seward-calculable} Let $G$ be an infinite finitely generated group with decidable word problem. Then $G$ admits a computable translation-like action by $\ZZ$ with decidable orbit membership problem.
\end{theorem}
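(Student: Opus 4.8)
The plan is to realize the desired translation-like $\ZZ$-action as a \emph{line decomposition} of the Cayley graph and to carry out its construction effectively. Fix a finite symmetric generating set $S$. A right translation-like action of $\ZZ$ on $G$ (with the standard boundedness constraint, $g^{-1}(g\ast (\pm 1))\in S$) is the same datum as a spanning subgraph $\Delta$ of the Cayley graph $\Gamma(G,S)$ in which every vertex has degree exactly $2$ and which contains no cycle; equivalently, $\Delta$ is a disjoint union of bi-infinite lines covering $G$, with $g\ast 1$ and $g\ast(-1)$ the two $\Delta$-neighbours of $g$, and freeness of the action coming from acyclicity. Under this dictionary, ``computable action'' means that from a word representing $g$ one can compute the two generators $s\in S$ with $g\ast(\pm 1)=gs$, and ``decidable orbit membership problem'' means one can decide whether two given elements lie in the same connected component of $\Delta$. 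Since $G$ has decidable word problem we may fix a computable enumeration $1=g_0,g_1,g_2,\dots$ of $G$ by breadth-first search in $\Gamma(G,S)$ together with canonical representative words, and adjacency in $\Gamma(G,S)$ is then a decidable relation.

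I would then build $\Delta=\bigcup_{n}\Delta_n$ as an increasing union of finite sub-forests of $\Gamma(G,S)$ all of whose components are finite simple paths, processing in order the requirements $R_n$: ``$g_n$ has two $\Delta$-neighbours''. Edges are only ever added, never removed, so the limit is well defined and the whole construction is computable provided each requirement can be met by a terminating search. The engine is a Seward-style extension lemma (compare \cite{seward_Burnside_2014}): given a finite disjoint union of finite simple paths in $\Gamma(G,S)$ and a vertex $v$ of degree $<2$, one can extend it --- still a disjoint union of finite simple paths, hence cycle-free --- so that $v$ acquires degree $2$. The point is that $\Gamma(G,S)$ is infinite, connected and locally finite, so from any path endpoint one can always route a new edge, or a short detour, towards BFS-fresh territory; this turns the search for a valid extension into a finite one, hence effective. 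In the limit every vertex has degree $2$ (the requirements are met) and no cycle is ever created, and local finiteness forces the components to be countable, hence bi-infinite lines; so we obtain a computable bounded free $\ZZ$-action, and boundedness with parameter $F=S$ is immediate since all edges of $\Delta$ are Cayley edges.

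The main obstacle --- and the reason this is genuinely more than a routine effectivisation of Seward's theorem --- is arranging \emph{decidable} orbit membership. The relation ``$g_i$ and $g_j$ lie in the same component of $\Delta$'' is a priori only $\Sigma^0_1$: one searches for a stage at which they get joined, but one also needs to be able to \emph{certify} non-equivalence, which means controlling how components merge along the construction. I would handle this by strengthening the bookkeeping: maintain, alongside $\Delta_n$, an effective union--find structure on the vertices seen so far, and design the extension strategy so that once the requirements $R_0,\dots,R_n$ have been satisfied, no later stage ever merges two components each of which already contains some $g_i$ with $i\le n$. Concretely, when saturating a vertex one only grows the endpoints of the path currently containing it, joining to an already-saturated component at most once and otherwise escaping into previously untouched vertices. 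This guarantees that the component of $g_i$ in $\Delta$ is already determined by $\Delta_{N}$ for a stage $N$ computable from $i$, so ``same component'' becomes decidable. The technical heart of the argument is showing that the extension lemma can always be executed under this added restriction --- that one never gets genuinely stuck and forced to merge two old components --- which again rests on the infiniteness and connectedness of the Cayley graph together with a careful choice of which endpoint to extend and in which direction.
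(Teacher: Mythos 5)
This statement is imported from the cited reference and not proved in the present paper, so the comparison is against the known argument there, which is an effectivization of Seward's proof: one builds a computable spanning forest of the Cayley graph whose components are infinite trees with one or two ends, and then takes bi-infinite Hamiltonian paths in the \emph{cubes} of those trees, so that consecutive points of an orbit are at word-distance up to $3$ rather than $1$. Your overall shape (a stagewise computable construction of a line decomposition, plus a freezing/union--find discipline so that the component of $g_i$ is settled by a stage computable from $i$) is reasonable, and the decidability bookkeeping is genuinely the right idea. But the core of your argument is an ``extension lemma'' that is asserted rather than proved, and as stated it is false.

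Concretely: you require all edges of $\Delta$ to be Cayley edges and you never remove edges. Under those constraints a vertex can become permanently blocked. In $\ZZ^2$ with the standard generators, the single simple path $(0,1)-(1,1)-(1,0)-(1,-1)-(0,-1)-(-1,-1)-(-1,0)-(-1,1)$ saturates all four neighbours of the origin while leaving the origin at degree $0$; no further edge-additions can ever give it degree $2$. So ``route from a path endpoint towards BFS-fresh territory'' does not address the actual obstruction, which sits at the vertex being serviced, not at the endpoints; avoiding such configurations for all time \emph{is} the theorem, and nothing in the sketch substitutes for it. Moreover, the target object you commit to --- a spanning $2$-regular acyclic subgraph of $\Gamma(G,S)$ itself, i.e.\ $g^{-1}(g\ast(\pm1))\in S$ --- is strictly stronger than what the statement asserts (translation-like only requires $g^{-1}(g\ast 1)$ to lie in \emph{some} finite set $F$) and is not known to exist in general; this is precisely why Seward passes to cubes of trees. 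Your added non-merging constraint for decidable orbit membership then piles a further unverified requirement onto an extension step that is already unjustified. A smaller, fixable omission: you never orient the limit lines, which is needed to pass from a path decomposition to an actual computable $\ZZ$-action; with your freezing property this can be done using the minimal-index element of each (eventually determined) component, but it has to be said.
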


\begin{theorem}\label{thm:direct_products_clasification_medvedev_degrees}
        Let $H,K$ be two infinite and finitely generated groups with decidable word problem. Then $\msft{H\times K}$ is the set of all $\Pi_1^0$ Medvedev degrees.
\end{theorem}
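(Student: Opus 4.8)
The plan is to use the machinery of translation-like actions together with a self-simulation-type argument built from the two factors. The key observation is that $H \times K$ contains a lot of structure: each factor acts on the whole group, and more importantly, $H$ acts translation-like on $H \times K$ (via the action on the first coordinate, which is free and bounded), and likewise $K$. First I would record that $H \times K$ is infinite, finitely generated, and has decidable word problem (the word problem of a direct product of two groups with decidable word problem is decidable), so by \Cref{prop:msft-and-msoff-are-contained-in-Pi-1-degrees} we already know $\msft{H \times K}$ is contained in the $\Pi_1^0$ degrees. It remains to realize every $\Pi_1^0$ Medvedev degree as the degree of an $(H\times K)$-SFT.

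For the lower bound, the natural route is: take a $\ZZ^2$-SFT $Z$ with prescribed $\Pi_1^0$ Medvedev degree $\mathbf{d}$ (Simpson, \Cref{simpson_result}), and transfer it to $H \times K$. The difficulty is that $H$ and $K$ need not be finitely presented, so one cannot directly invoke the translation-like-action SFT transfer (\Cref{prop:translation-like-actions-and-medvedev-degrees}), which requires the acting group to be finitely presented — and $\ZZ^2$ is finitely presented, but acting $\ZZ^2$ translation-like on $H\times K$ is not obviously available. Instead I would proceed in two stages using $\ZZ$, which \emph{is} finitely presented. By \Cref{seward-calculable}, $K$ admits a computable translation-like action by $\ZZ$ with decidable orbit membership problem; hence so does $H \times K$ restricted appropriately — more precisely, I want $\ZZ$ to act translation-like on $H\times K$ in a way that "sees" the $K$ direction. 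Then $\operatorname{T}[X]$ for an effective $\ZZ$-subshift $X$ gives effective $(H\times K)$-subshifts, but to get an \emph{SFT} I need $X$ to be a $\ZZ$-SFT, which forces $m(X) = 0_\M$. So a single translation-like $\ZZ$ action is not enough.

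The correct idea, I believe, is to combine the two translation-like $\ZZ$-actions (one along $H$, one along $K$) to synthesize a translation-like $\ZZ^2$-action — or more robustly, to overlay the $\ZZ$-subshift-of-bounded-actions constructions for both factors simultaneously. Concretely: let $\operatorname{T}_H$ be the $G$-subshift ($G = H\times K$) of bounded $\ZZ$-actions with parameters adapted to the first factor, and $\operatorname{T}_K$ similarly for the second; both are SFTs because $\ZZ$ is finitely presented (\Cref{T[X]-es-SFT}), and both are nonempty with a computable point (the canonical translation-like actions), so $m(\operatorname{T}_H) = m(\operatorname{T}_K) = 0_\M$. Inside a configuration carrying both, the two $\ZZ$-orbit structures commute up to bounded error and jointly organize $G$ into (coarsely) a $\ZZ^2$-grid on each double orbit. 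Then, mimicking the definition of $\operatorname{T}[X]$, I would define $\operatorname{T}_{H,K}[Z]$ for a $\ZZ^2$-subshift $Z$: configurations over $A \times B_H \times B_K$ where the $B_H, B_K$ parts encode the two $\ZZ$-actions and the $A$-part, read along the induced $\ZZ^2$-coordinates, lies in $Z$. The local rules defining membership in $Z$ are finite since $Z$ is an SFT, so $\operatorname{T}_{H,K}[Z]$ is an $(H\times K)$-SFT. A computability argument parallel to the proof of \Cref{prop:medvedev-degrees-and-translation-like-actions}, using that the actions are computable with decidable orbit membership problem, gives $m(\operatorname{T}_{H,K}[Z]) = m(Z) \vee m(\operatorname{T}_H) \vee m(\operatorname{T}_K) = m(Z) = \mathbf{d}$. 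Taking $Z$ ranging over all $\ZZ^2$-SFTs realizes all $\Pi_1^0$ degrees, completing the proof.

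The main obstacle I anticipate is the bookkeeping in the two-stage construction: verifying that two commuting (up to bounded error) translation-like $\ZZ$-actions genuinely induce a well-defined $\ZZ^2$-labeling on which an SFT condition can be checked \emph{locally}, and that the orbit-membership decidability of each factor suffices to reconstruct $Z$ computably from $\operatorname{T}_{H,K}[Z]$ — in particular one must ensure the $\ZZ^2$ relations ($[a,b]=1$) can be enforced by finitely many forbidden patterns, which is where finite presentability of $\ZZ^2$ (rather than of $H$ or $K$) does the work. An alternative, possibly cleaner packaging: first build on $H\times\ZZ$ using the translation-like $\ZZ$-action on $H$ to get an SFT of degree $\mathbf d$ from a $\ZZ^2$-SFT, then pull back along the obvious quotient-type relation to $H\times K$ via the translation-like $\ZZ$-action on $K$ — but one must check this composition stays within SFTs, which again reduces to the same local-enforceability point. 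Either way, once the geometric skeleton is in place the computability estimates are routine given \Cref{prop:medvedev-degrees-and-translation-like-actions} and \Cref{seward-calculable}.
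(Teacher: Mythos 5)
Your proposal is correct and follows essentially the same route as the paper: apply \Cref{seward-calculable} to each factor to obtain a computable translation-like action of $\ZZ^2$ on $H\times K$ with decidable orbit membership, then invoke \Cref{prop:translation-like-actions-and-medvedev-degrees} (with $\ZZ^2$ finitely presented doing the work) together with Simpson's theorem, plus \Cref{prop:msft-and-msoff-are-contained-in-Pi-1-degrees} for the upper bound. The obstacle you anticipate is not actually present: the two $\ZZ$-actions live on disjoint coordinates, so they commute exactly and yield an honest translation-like $\ZZ^2$-action, making your $\operatorname{T}_{H,K}[Z]$ nothing other than $\operatorname{T}[Z]$ for that action.
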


\begin{proof}
    An immediate consequence of~\Cref{seward-calculable} is that $H\times K$ admits a computable translation-like action by $\ZZ^2$ which has decidable orbit membership problem % Nota: comente el siguiente comentario, creo que no es cierto cuando $H,K$ tienen muchos fines 
    %(in fact, it is transitive, so the orbit membership problem is trivial)
    By \Cref{prop:translation-like-actions-and-medvedev-degrees}, we have that $\msft{\ZZ^2}$ is contained in $\msft{H\times K}$, and thus all $\Pi_1^0$ Medvedev degrees can be attained by an SFT on $H\times K$. On the other hand, the Medvedev degree of every SFT on $H\times K$ is a $\Pi_1^0$ degree because $H\times K$ has decidable word problem (\Cref{prop:msft-and-msoff-are-contained-in-Pi-1-degrees}).       
\end{proof}

    A class of groups where~\Cref{thm:direct_products_clasification_medvedev_degrees} can be applied meaningfully are branch groups. There is more than one definition in the literature of a branch group, see for instance~\cite{BartholdiGrigorchukSuni_branchgroups_2023}. However, both definitions have the following consequence: every branch group is commensurable to a direct product of two infinite groups. In particular, if $G$ is an infinite and finitely generated branch group with decidable word problem, then it is commensurable to the direct product of two infinite and finitely generated groups with decidable word problem. Putting together~\Cref{thm:direct_products_clasification_medvedev_degrees} and~\Cref{prop:commensurable_same_degrees} we obtain the following result.

\begin{corollary}\label{cor:branch_groups}
    For every infinite and finitely generated branch group $G$ with decidable word problem $\msft{G}$ is the set of all $\Pi_1^0$ Medvedev degrees. 
\end{corollary}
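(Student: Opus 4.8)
The plan is to reduce \Cref{cor:branch_groups} to the combination of \Cref{thm:direct_products_clasification_medvedev_degrees} and \Cref{prop:commensurable_same_degrees}. The first step is to recall the structural fact that any branch group is commensurable to a direct product of two infinite groups. Depending on which definition of branch group one uses, this is either immediate (in the geometric/action definition, the rigid stabilizer of the first level already gives a finite-index subgroup isomorphic to a direct power $\prod_{i} \operatorname{rist}_G(v_i)$ over the vertices $v_i$ of the first level, and each rigid vertex stabilizer is infinite) or follows from the algebraic definition (the existence of an infinite chain of subnormal subgroups with the branching property yields, at some level, a finite-index subgroup splitting as a direct product of at least two infinite factors). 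In either case one obtains a finite-index subgroup $G'\leqslant G$ with $G'\cong H_1\times H_2\times\cdots\times H_k$ where $k\geq 2$ and each $H_i$ is infinite; grouping factors, $G'$ is isomorphic to a direct product $H\times K$ of two infinite groups.

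The second step is to check that the decidability of the word problem passes to the factors. Since $G$ has decidable word problem and $G'$ has finite index in $G$, the word problem of $G'$ is decidable (this is standard: a finite-index subgroup of a finitely generated group with decidable word problem is finitely generated with decidable word problem, as one can compute a transversal and rewrite). Each $H_i$ is a retract of $G'$ — it is both a subgroup and a quotient — so its word problem is decidable as well: a word in the generators of $H_i$ maps to the identity in $H_i$ if and only if the corresponding word maps to the identity in $G'$. Hence $H$ and $K$ are infinite, finitely generated, and have decidable word problem, so \Cref{thm:direct_products_clasification_medvedev_degrees} applies to give that $\msft{H\times K}$ is the set of all $\Pi_1^0$ Medvedev degrees.

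The third step is to transfer this back to $G$. Since $G'\cong H\times K$ is a finite-index subgroup of $G$, the groups $G$ and $H\times K$ are commensurable, so \Cref{prop:commensurable_same_degrees} gives $\msft{G}=\msft{H\times K}$, which is the set of all $\Pi_1^0$ Medvedev degrees. (As a sanity check in the other direction: $G$ itself has decidable word problem, so by \Cref{prop:msft-and-msoff-are-contained-in-Pi-1-degrees} every $G$-SFT has $\Pi_1^0$ Medvedev degree, consistent with the claimed equality.)

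The main obstacle is purely bookkeeping around the definition of branch group: one must be careful to cite or argue the commensurability-with-a-direct-product fact in a way that covers the definition actually in use, and to make sure that the two factors can indeed be taken infinite. Everything else — the passage of decidable word problem to finite-index subgroups and to direct factors, and the appeal to the two cited results — is routine. So in the write-up I would state the structural fact with a reference to \cite{BartholdiGrigorchukSuni_branchgroups_2023}, note the word-problem inheritance in one sentence, and then invoke \Cref{thm:direct_products_clasification_medvedev_degrees} and \Cref{prop:commensurable_same_degrees}.
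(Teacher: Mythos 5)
Your proposal is correct and follows exactly the paper's route: the paper likewise invokes the fact that every branch group is commensurable to a direct product of two infinite groups (with decidable word problem inherited by the finite-index subgroup and its factors), and then combines \Cref{thm:direct_products_clasification_medvedev_degrees} with \Cref{prop:commensurable_same_degrees}. The extra detail you supply on the rigid-stabilizer decomposition and the word-problem inheritance is a correct expansion of what the paper leaves implicit.
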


Without the assumption of decidable word problem, we can only show the existence of SFTs with nonzero Medvedev degree.

\begin{theorem}\label{thm:direct_products_nontrivial_medvedev_degrees}
        For any two infinite and finitely generated groups $H,K$ the set $\msft{H\times K}$ contains a nonzero Medvedev degree. Moreover, if $H,K$ are recursively presented, then $\msft{H\times K}$ contains the maximal $\Pi_1^0$ Medvedev degree.
\end{theorem}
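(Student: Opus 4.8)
The plan is to reduce to the previously established machinery by producing, inside $H \times K$, a translation-like action of a group that is already known to carry SFTs with nonzero (respectively maximal $\Pi_1^0$) Medvedev degree, and then invoke \Cref{prop:translation-like-actions-and-medvedev-degrees} (or \Cref{prop:translation-like-actions-and-medvedev-degrees} together with \Cref{T[X]-es-SFT}). The natural target is $\ZZ^2$: if we can show that $H \times K$ admits a translation-like action by $\ZZ^2$, then by \Cref{T[X]-es-SFT} (since $\ZZ^2$ is finitely presented) the subshift $\operatorname{T}[X]$ is a $G$-SFT for any $\ZZ^2$-SFT $X$, and by item (1) of \Cref{prop:medvedev-degrees-and-translation-like-actions} we get $m(\operatorname{T}[X]) \geq m(X)$. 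Taking $X$ to be a nonempty $\ZZ^2$-SFT with nonzero Medvedev degree (which exists by Simpson's \Cref{simpson_result}) gives the first assertion. For the refinement, when $H,K$ are recursively presented so is $H \times K$, hence by \Cref{prop:msft-and-msoff-are-contained-in-Pi-1-degrees} every $G$-SFT has $\Pi_1^0$ degree; taking $X$ to be a $\ZZ^2$-SFT realizing the maximal $\Pi_1^0$ degree, the bound $m(\operatorname{T}[X]) \geq m(X)$ combined with the $\Pi_1^0$ ceiling forces $m(\operatorname{T}[X])$ to be exactly the maximal $\Pi_1^0$ degree.

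So the real content is: \emph{every product of two infinite finitely generated groups admits a translation-like action by $\ZZ^2$}. By Seward's theorem (\cite[Theorem 1.4]{seward_Burnside_2014}, already cited in the excerpt), every infinite finitely generated group admits a translation-like action by $\ZZ$; let $\ast_H$ and $\ast_K$ be such actions of $\ZZ$ on $H$ and on $K$ respectively. I would then define an action of $\ZZ^2 = \ZZ \times \ZZ$ on $H \times K$ coordinatewise: $(h,k) \ast (m,n) = (h \ast_H m,\, k \ast_K n)$. This is clearly a right action. It is free because $\ast_H$ and $\ast_K$ are free: if $(h,k)\ast(m,n) = (h,k)$ then $h \ast_H m = h$ and $k \ast_K n = k$, forcing $m = 0$ and $n = 0$. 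It is bounded: with respect to the word metric on $H \times K$ coming from the union of generating sets $S_H \cup S_K$, a generator of $\ZZ^2$ is $(\pm 1, 0)$ or $(0,\pm 1)$, and applying $(\pm 1,0)$ moves $(h,k)$ to $(h \ast_H (\pm 1), k)$, which differs from $(h,k)$ by the element $h^{-1}(h\ast_H(\pm 1)) \in H$, lying in the fixed finite set witnessing boundedness of $\ast_H$; similarly for $(0,\pm 1)$. Hence the coordinatewise action is translation-like.

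The main obstacle is essentially bookkeeping rather than a genuine difficulty: one must verify that the coordinatewise action is genuinely bounded with respect to a word metric on $H \times K$, and that the parameters $(S,F)$ can be chosen so that the associated subshift of bounded actions $\operatorname{T}$ is nonempty — but nonemptiness of $\operatorname{T}$ is automatic once an actual translation-like action with those parameters exists, since that action yields a point of $\operatorname{T}$. One should also double-check that in the recursively presented case we are not secretly using computability of the action (which we do not have, as Seward's action need not be computable): indeed item (1) of \Cref{prop:medvedev-degrees-and-translation-like-actions} requires no computability or recursive presentation hypothesis, only item (2) does, so the argument goes through. Finally, for the "moreover" clause one notes $\operatorname{T}[X]$ is nonempty because $\operatorname{T}$ is nonempty and $X$ is nonempty (one can overlay any configuration of $X$ along each orbit), completing the proof.
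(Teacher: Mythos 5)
Your proposal is correct and follows essentially the same route as the paper: Seward's theorem gives $\ZZ$-translation-like actions on $H$ and $K$, whose product is a translation-like $\ZZ^2$-action on $H\times K$, and then \Cref{T[X]-es-SFT} together with item (1) of \Cref{prop:medvedev-degrees-and-translation-like-actions} applied to a maximal-degree $\ZZ^2$-SFT from \Cref{simpson_result} gives the lower bound, with \Cref{prop:msft-and-msoff-are-contained-in-Pi-1-degrees} supplying the $\Pi_1^0$ ceiling in the recursively presented case. You merely spell out some details (the product action, nonemptiness of $\operatorname{T}[X]$) that the paper leaves implicit.
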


\begin{proof}
    By Seward's result on translation-like actions~\cite[Theorem 1.4]{seward_Burnside_2014} we have that $\ZZ^2$ acts translation-like on $H \times K$. As $\ZZ^2$ is finitely presented and $\msft{\ZZ^2}$ is the set of all $\Pi_1^0$ Medvedev degrees,~\Cref{prop:translation-like-actions-and-medvedev-degrees} yields that there is a $(H \times K)$-SFT whose degree is at least the maximal $\Pi_1^0$ Medvedev degree, in particular it is  nonzero. If $H,K$ are recursively presented, then so is $H\times K$ and the upper bound also holds. 
\end{proof}

\begin{corollary}
    Every infinite and finitely generated branch group $G$ admits a nonempty SFT with nonzero Medvedev degree. If $G$ is recursively presented, then $\msft{G}$ contains the maximal $\Pi_1^0$ Medvedev degree.
\end{corollary}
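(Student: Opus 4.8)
The plan is to reduce the statement about branch groups to the corresponding statement about direct products, which has already been established in \Cref{thm:direct_products_nontrivial_medvedev_degrees}. The key structural fact we invoke is that every branch group is commensurable to a direct product of two infinite groups; this is a consequence of either of the standard definitions of branch group (one can find this in~\cite{BartholdiGrigorchukSuni_branchgroups_2023}). So if $G$ is an infinite and finitely generated branch group, there is a direct product $H\times K$ of two infinite groups that is commensurable to $G$. Here some care is needed: commensurability involves passing to finite-index subgroups, so the factors $H$ and $K$ obtained this way are finite-index subgroups of the original factors in the branch decomposition, and in particular they are finitely generated (finite-index subgroups of finitely generated groups are finitely generated). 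Thus $H\times K$ is a direct product of two infinite, finitely generated groups.

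First I would invoke \Cref{thm:direct_products_nontrivial_medvedev_degrees} applied to $H\times K$ to conclude that $\msft{H\times K}$ contains a nonzero Medvedev degree, and moreover the maximal $\Pi_1^0$ degree whenever $H\times K$ is recursively presented. Then I would transfer this to $G$ using the commensurability invariance of $\msft{\cdot}$, namely \Cref{prop:commensurable_same_degrees}, which gives $\msft{G}=\msft{H\times K}$. This immediately yields that $\msft{G}$ contains a nonzero Medvedev degree, proving the first assertion.

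For the second assertion, suppose $G$ is recursively presented. I would need to check that $H\times K$ is also recursively presented in order to apply the recursively-presented half of \Cref{thm:direct_products_nontrivial_medvedev_degrees}. Being recursively presented is preserved under passing to finitely generated subgroups (a subgroup of a recursively presented group, if finitely generated, is recursively presented — this is a classical consequence of the fact that recursive presentability for finitely generated groups is a commensurability invariant, or more directly one can enumerate the relations), and also passes from a finite-index subgroup up to the whole group; either way, recursive presentability is a commensurability invariant for finitely generated groups. Hence $H\times K$ is recursively presented, and \Cref{thm:direct_products_nontrivial_medvedev_degrees} gives that $\msft{H\times K}$ contains the maximal $\Pi_1^0$ Medvedev degree; by \Cref{prop:commensurable_same_degrees} the same holds for $\msft{G}$.

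The main obstacle, such as it is, is entirely bookkeeping: verifying that the branch decomposition survives the passage to finite-index subgroups with the "infinite" and "finitely generated" (and, for the second part, "recursively presented") properties intact. There is no hard analysis or construction here — all the real work has already been done in \Cref{thm:direct_products_nontrivial_medvedev_degrees} (which rests on Seward's translation-like action of $\ZZ^2$ and Simpson's theorem) and in the commensurability invariance \Cref{prop:commensurable_same_degrees}. One should just be careful to state explicitly that finite-index subgroups of finitely generated groups are finitely generated, and that recursive presentability is inherited in the relevant directions, so that the hypotheses of the cited theorems are genuinely met.
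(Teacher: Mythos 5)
Your proof is correct and follows essentially the same route the paper intends: the corollary is stated immediately after \Cref{thm:direct_products_nontrivial_medvedev_degrees} and rests on the fact that every branch group is commensurable to a direct product of two infinite (finitely generated) groups, combined with the commensurability invariance \Cref{prop:commensurable_same_degrees}. Your extra bookkeeping about finite generation and recursive presentability being preserved under commensurability is accurate and fills in details the paper leaves implicit.
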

  
\subsection{Groups which are quasi-isometric to the hyperbolic plane}\label{subsec:QI_hyperbolic}

In this section, we consider the hyperbolic plane $\mathbb{H} = \{(x,y) \in \mathbb{R}^2 : y >0 \}$ as a computable metric space given by its Riemannian metric and with dense countable set given by a computable enumeration of $\mathbb{Q}^2 \cap \mathbb{H}$.

\begin{theorem}\label{thm:hyperbolic_qi_medvedev}
    Let $G$ be a finitely generated group that is quasi-isometric to $\mathbb{H}$. Then there exists a $G$-SFT $X$ with $m(X)>0_{\M}$. 
\end{theorem}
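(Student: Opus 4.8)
The plan is to combine the quasi-isometry transfer machinery from \Cref{lem:QI_medvedev} with the fact that the modular group $\operatorname{PSL}_2(\ZZ)$, which is virtually free, nevertheless contains lattices — no wait: the point is to find \emph{some} finitely presented group which is quasi-isometric to $\mathbb{H}$ and which is known to admit an SFT with nonzero Medvedev degree, and then transport that SFT along a computable quasi-isometry. A natural candidate is a cocompact Fuchsian group, i.e., the fundamental group $\Gamma_g$ of a closed orientable surface of genus $g \geq 2$. Such a group acts properly cocompactly by isometries on $\mathbb{H}$, so by the \v Svarc–Milnor Lemma (\Cref{obs:svarcmilnor}) it is finitely presented and quasi-isometric to $\mathbb{H}$; moreover, since the isometries of $\mathbb{H}$ used are explicit Möbius transformations with algebraic coefficients, the orbit map of a computable point is a computable quasi-isometry, and $\Gamma_g$ has decidable word problem. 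So the strategy reduces to two tasks: (i) show $\Gamma_g$ admits an SFT with nonzero Medvedev degree; (ii) transport this along computable quasi-isometries to an arbitrary $G$ quasi-isometric to $\mathbb{H}$.

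For task (ii), suppose $G$ is finitely generated and quasi-isometric to $\mathbb{H}$. By \Cref{obs:svarcmilnor} there is a computable quasi-isometry $h\colon \Gamma_g \to \mathbb{H}$ coming from the orbit map. If $G$ also admitted a computable quasi-isometry to $\mathbb{H}$, then by the lemma following \Cref{coro_QI} (the one composing two computable quasi-isometries into $X$ through a computable metric space) $G$ and $\Gamma_g$ would be computably quasi-isometric, provided $G$ has decidable word problem. But an arbitrary $G$ quasi-isometric to $\mathbb{H}$ need not have decidable word problem a priori — however, being quasi-isometric to $\mathbb{H}$, $G$ is finitely presented and word-hyperbolic, hence has decidable word problem. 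The remaining gap is to produce a \emph{computable} quasi-isometry from $G$ to $\mathbb{H}$: a quasi-isometry exists by hypothesis, and since the word problem of $G$ is decidable, one can search for quasi-isometry data; alternatively, one invokes that $G$ acts geometrically on $\mathbb H$ after possibly passing to a quasi-action, but cleanest is to note $G$ is virtually a surface group (by the solution to the Cannon conjecture in this dimension / the work on convergence groups: a group quasi-isometric to $\mathbb H$ acts geometrically on $\mathbb H$, hence is a virtual surface group), so by \Cref{prop:commensurable_same_degrees} we may as well take $G = \Gamma_g$ directly. Then \Cref{coro_QI} gives $\msft{G} = \msft{\Gamma_g}$, and it suffices to do task (i).

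For task (i): I would realize a $\Gamma_g$-SFT with nonzero Medvedev degree by the classical fact that $\Gamma_g$ contains a finite-index subgroup, or better, that the \emph{$\ZZ$}-orbit structure combined with a hyperbolic substitution-tiling construction yields aperiodicity with uncomputable content. Concretely, Cohen's construction already does exactly this: since $\Gamma_g$ is finitely presented and quasi-isometric to $\mathbb{H}$, and $\mathbb{H}$ admits self-similar (hierarchical) tilings, one builds a strongly aperiodic SFT on $\Gamma_g$, and in fact the hierarchical structure lets one encode a $\ZZ$-like horocyclic direction along which an arbitrary effective $\ZZ$-subshift can be imprinted; using Miller's effective $\ZZ$-subshift of maximal $\Pi_1^0$ degree \cite{miller_Two_2012} one gets a $\Gamma_g$-SFT of positive (indeed maximal $\Pi_1^0$) Medvedev degree. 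The main obstacle is precisely this point: there is no off-the-shelf ``$\operatorname{T}[X]$''-style statement for a hyperbolic group, so one must either cite/adapt an existing hierarchical-SFT construction on the hyperbolic plane (e.g.\ along the lines of Goodman-Strauss's strongly aperiodic hyperbolic tilings, or the Cohen–Goodman-Strauss–Rieck framework) and verify it can carry an effective overlay, or find a subgroup/quotient relation reducing to a group already handled. I expect the cleanest write-up routes everything through: (a) $G$ quasi-isometric to $\mathbb H$ $\Rightarrow$ $G$ commensurable to a surface group $\Gamma_g$; (b) exhibit one explicit $\Gamma_g$-SFT with nonzero Medvedev degree via a hierarchical construction overlaid with an effective $\ZZ$-subshift; (c) conclude via \Cref{prop:commensurable_same_degrees}. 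Step (b) is the heart of the matter and the most likely place for a subtlety in the aperiodic-tiling bookkeeping.
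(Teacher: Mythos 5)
Your overall architecture matches the paper's: reduce to a single finitely presented group quasi-isometric to $\mathbb{H}$ (a surface group) and transfer via Cohen's $\operatorname{QI}[\cdot]$ machinery. For the transfer step, note that you do not need the convergence-group theorem, commensurability, or computable quasi-isometries at all: since any group quasi-isometric to $\mathbb{H}$ is word-hyperbolic and hence finitely presented, \Cref{admitir-sft-incalculable-es-invariante-de-qi} (i.e.\ the inequality $m(\operatorname{QI}[X])\geq m(X)$ from \Cref{lem:QI_medvedev}, which holds with no recursive-presentation hypothesis) already shows that admitting an SFT of nonzero Medvedev degree is a quasi-isometry invariant among finitely presented groups. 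Your commensurability detour is both heavier (Tukia--Gabai--Casson--Jungreis) and slightly delicate, since a group quasi-isometric to $\mathbb{H}$ is a priori only a finite extension of a cocompact Fuchsian group, and finite extensions are not automatically commensurable to the quotient.

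The genuine gap is your task (i), which you yourself flag as ``the heart of the matter'': you never actually produce a $\pi_1(\Sigma_2)$-SFT with nonzero Medvedev degree. The sketch you offer --- a hierarchical hyperbolic tiling carrying a horocyclic $\ZZ$-direction on which Miller's maximal-degree effective $\ZZ$-subshift is imprinted --- is not an established construction, and the conclusion you draw from it (that the resulting SFT attains the \emph{maximal} $\Pi_1^0$ degree) overshoots what is known: the paper explicitly records that the precise set of degrees attained by $\pi_1(\Sigma_2)$-SFTs is open, precisely because no such $\operatorname{T}[X]$-style overlay is available for $\mathbb{H}$. The paper instead fills this step by a concrete chain: Jeandel's Turing machine that halts on every computable configuration but has a nonempty $\Pi_1^0$ set of immortal configurations, Hooper's encoding of such a machine into a rational piecewise affine map, Kari's encoding of the immortal points of that map into local rules on the binary tiling of $\mathbb{H}$, and the encoding of those colorings into a $\pi_1(\Sigma_2)$-SFT from the surface-group domino paper. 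This yields a nonempty SFT with no computable configurations, hence nonzero (but not provably maximal) Medvedev degree. Without this, or some equally concrete substitute, your argument does not establish the theorem.
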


\begin{proof}
    Any group which is quasi-isometric to a hyperbolic metric space is a word-hyperbolic group and thus is finitely presented and has decidable word problem (see for instance~\cite[Theorem 2.6]{bridson_Metric_1999}). In particular, all groups considered in the statement are finitely presented. As we already know that admitting an SFT with nonzero Medvedev degree is a quasi-isometry invariant for finitely presented groups (\Cref{admitir-sft-incalculable-es-invariante-de-qi}), it suffices to exhibit one group $G$ which is quasi-isometric to $\mathbb{H}$ and which admits a $G$-SFT $X$ with $m(X)>0_{\M}$.
    
    For this purpose, we consider the fundamental group $\pi_1(\Sigma_2)$ of a closed orientable surface of genus two. This group is finitely presented and quasi-isometric to the hyperbolic plane. In~\cite{domino-surface-groups}, the authors prove the undecidability of the domino problem for $\pi_1(\Sigma_2)$. The same construction, plus some results that are present in the literature, can be used to exhibit an SFT $X$ with $m(X)>0_{\M}$. 

Let us proceed more precisely. In~\cite[Corollary 2]{jeandel_Immortal_2012} the author shows that there exists a piecewise affine map $f\colon [0,1]^2 \to[0,1]^2$ with rational coefficients and endpoints, and such that every computable element in $[0,1]^2$ converges to $(0,0)$ in a finite number of iterations. However, the set of \define{immortal} points $I_f$ whose orbit does not converge is nonempty and thus contains only uncomputable points. Using a beautiful construction of Kari~\cite[Section 2]{kari_Tiling_2007}, the set $I_f$ can be encoded as a set of colorings of a graph which represents a binary tiling of $\mathbb{H}$ given by local rules. Finally, in~\cite[Section 5]{domino-surface-groups} the authors show that every such set of colorings can be encoded in a $\pi_1(\Sigma_2)$-SFT $X$. These constructions are effective in the sense that if $X$ had a computable point, then one could use it to obtain a computable element in $I_f$. Therefore $m(X)>0_{\M}$. \end{proof}

We remark that the corollary of~\cite{jeandel_Immortal_2012} that we use, is a dynamical analogue of a classical result of Hooper~\cite{hooper_undecidability_1966} which shows that the emptiness of the set of immortal configurations of a Turing machine is undecidable. We note that the technique of encoding Turing machines into piecewise linear maps was first used by Blondel et al ~\cite{BlondelBournezKoiran2001_piecewise} to show the undecidability of several dynamical properties of piecewise affine maps in $\RR^2$ and has since been used by several other authors. It is also a central piece in the proof of the undecidability of the domino problem in the hyperbolic plane see~\cite{kari_Tiling_2007}.

A natural class of examples is given by some Fuchsian groups. A group $G$ is \define{Fuchsian} if it is a discrete subgroup of $\operatorname{PSL(2,\RR)}$, a good introductory reference is~\cite{Katok1992-nk}. Fuchsian groups act properly discontinuously on $\mathbb{H}$ by isometries (M\"obius transformations). It follows by the Milnor-\v{S}varc lemma that if a Fuchsian group is finitely generated and acts co-compactly, it is quasi-isometric to $\mathbb{H}$.

\begin{corollary}
    Every finitely generated co-compact Fuchsian group has an SFT $X$ with $m(X)>0_{\M}$.
\end{corollary}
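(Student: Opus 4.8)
The claim is that every finitely generated co-compact Fuchsian group has an SFT with nonzero Medvedev degree. The plan is to reduce this to \Cref{thm:hyperbolic_qi_medvedev} by showing that any such group is quasi-isometric to the hyperbolic plane $\mathbb{H}$. The key tool is the \v{S}varc--Milnor Lemma, as recalled in \Cref{obs:svarcmilnor}.

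First I would recall the setup: a Fuchsian group $G$ is a discrete subgroup of $\operatorname{PSL}(2,\RR)$ (the statement writes $\operatorname{PSL}(2,\ZZ)$, but the relevant object for acting on $\mathbb{H}$ by M\"obius transformations is $\operatorname{PSL}(2,\RR)$), and it acts on $\mathbb{H}$ by isometries. Discreteness of $G$ in $\operatorname{PSL}(2,\RR)$ is equivalent to the action $G\curvearrowright\mathbb{H}$ being properly discontinuous; this is standard (see \cite{Katok1992-nk}). The hyperbolic plane $\mathbb{H}$ is a geodesic and proper metric space. By hypothesis $G$ is finitely generated and acts co-compactly, i.e.\ the quotient $\mathbb{H}/G$ is compact. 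Thus all hypotheses of the \v{S}varc--Milnor Lemma are satisfied, and we conclude that $G$ is quasi-isometric to $\mathbb{H}$ via the orbit map $g\mapsto g\cdot x_0$ for any basepoint $x_0\in\mathbb{H}$.

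Once quasi-isometry with $\mathbb{H}$ is established, \Cref{thm:hyperbolic_qi_medvedev} applies directly and gives a $G$-SFT $X$ with $m(X)>0_{\M}$, completing the proof. (One could also invoke \Cref{obs:svarcmilnor} to note that, since $\mathbb{H}$ is a computable metric space and each M\"obius transformation acts computably on it, the orbit map of a computable point is even a computable quasi-isometry — but this refinement is not needed here, since \Cref{thm:hyperbolic_qi_medvedev} only requires quasi-isometry, not computable quasi-isometry.)

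I do not expect any real obstacle here; the corollary is essentially a dictionary entry translating "co-compact Fuchsian" into "quasi-isometric to $\mathbb{H}$." The only point that requires a moment's care is making sure the hypotheses of \v{S}varc--Milnor are all genuinely met: properness and geodesicity of $\mathbb{H}$ are classical, and properness of the action plus co-compactness of the quotient are exactly the hypotheses "co-compact Fuchsian" packages. A minor subtlety worth a remark is the mismatch between $\operatorname{PSL}(2,\ZZ)$ and $\operatorname{PSL}(2,\RR)$ in the ambient definition; I would phrase the argument so that it only uses that $G$ is a discrete subgroup of the isometry group of $\mathbb{H}$ acting co-compactly, which is what "co-compact Fuchsian" means regardless of that typo.
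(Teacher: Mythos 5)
Your proof is correct and follows exactly the paper's route: the paper likewise invokes the Milnor--\v{S}varc lemma (properly discontinuous, co-compact isometric action on the geodesic proper space $\mathbb{H}$) to conclude the group is quasi-isometric to $\mathbb{H}$, and then applies \Cref{thm:hyperbolic_qi_medvedev}. Your remark about $\operatorname{PSL}(2,\ZZ)$ versus $\operatorname{PSL}(2,\RR)$ correctly identifies a typo in the paper's definition of Fuchsian group.
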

Natural examples of finitely generated co-compact Fuchsian groups are given by the hyperbolic triangle groups $\Delta(l,m,n)$ with $l,m,n \in \NN$ and $\frac{1}{l}+\frac{1}{n}+\frac{1}{m}<1$.

Now let us turn our attention to the classification of Medvedev degrees for groups which are quasi-isometric to $\mathbb{H}$. Recall that by~\Cref{coro_QI} all groups which admit a computable quasi-isometry to $\mathbb{H}$ form a computably quasi-isometry class. It is not hard to show that $\pi_1(\Sigma_2)$ belongs to this class, thus we obtain the following result.

\begin{corollary}
    Let $\pi_1(\Sigma_2)$ be the fundamental group of the closed orientable surface of genus two. If $G$ is a finitely generated group which admits a computable quasi-isometry $f\colon G \to \mathbb{H}$, then $\msft{G}=\msft{\pi_1(\Sigma_2)}$.
\end{corollary}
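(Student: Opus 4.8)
The plan is to reduce the statement to \Cref{coro_QI} by checking its hypotheses for the pair $G$ and $\pi_1(\Sigma_2)$. Since $\mathbb{H}$ is a proper geodesic Gromov-hyperbolic metric space, every finitely generated group that is quasi-isometric to $\mathbb{H}$ is word-hyperbolic, hence finitely presented and with decidable word problem (see \cite[Theorem 2.6]{bridson_Metric_1999}, exactly as in the proof of \Cref{thm:hyperbolic_qi_medvedev}). This applies to $G$ by hypothesis, and to $\pi_1(\Sigma_2)$, which acts cocompactly, properly discontinuously and isometrically on $\mathbb{H}$ and is therefore quasi-isometric to it. Thus both groups are finitely presented with decidable word problem, and by \Cref{coro_QI} it suffices to exhibit a computable quasi-isometry between them.

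To produce one, I would first build a computable quasi-isometry $h\colon\pi_1(\Sigma_2)\to\mathbb{H}$ and then invoke the lemma of this section stating that two finitely generated groups with decidable word problem which each admit a computable quasi-isometry into a common computable metric space are computably quasi-isometric: applied with $X=\mathbb{H}$, the hypothesised computable quasi-isometry $f\colon G\to\mathbb{H}$ together with $h$ yields a computable quasi-isometry $G\to\pi_1(\Sigma_2)$, and \Cref{coro_QI} then gives $\msft{G}=\msft{\pi_1(\Sigma_2)}$.

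It remains to construct $h$, and this is the step requiring care. I would realize $\pi_1(\Sigma_2)$ as the cocompact Fuchsian group generated by the four side-pairing Möbius transformations of a regular hyperbolic octagon with all interior angles equal to $\pi/4$, centered at a fixed computable point (say $i$ in the upper half-plane model). These transformations lie in $\operatorname{PSL}(2,\mathbb{R})$ with algebraic, hence computable, coefficients, so each map $x\mapsto g\cdot x$ on $\mathbb{H}$ is computable, and by composition so is the action of every group element. The action is by isometries, properly discontinuous, and has compact quotient (the octagon), while $\mathbb{H}$ is geodesic and proper; thus the \v{S}varc--Milnor lemma applies and, by \Cref{obs:svarcmilnor}, the orbit map of a computable base point is a computable quasi-isometry $h\colon\pi_1(\Sigma_2)\to\mathbb{H}$. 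The main obstacle is precisely this bookkeeping: pinning down an explicit cocompact Fuchsian realization of $\pi_1(\Sigma_2)$, verifying that its defining data (the octagon and the side-pairings) is computable, and confirming that the computable-metric-space structure on $\mathbb{H}$ fixed at the start of the section makes the isometric action computable in the sense required by \Cref{obs:svarcmilnor}. Everything else is a direct application of the machinery already developed.
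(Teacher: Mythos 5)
Your proposal is correct and follows essentially the same route as the paper: the paper also combines the lemma that two groups with decidable word problem admitting computable quasi-isometries into a common computable metric space are computably quasi-isometric, the \v{S}varc--Milnor observation (\Cref{obs:svarcmilnor}) to get a computable quasi-isometry $\pi_1(\Sigma_2)\to\mathbb{H}$ from a Fuchsian realization with computable matrix coefficients, and then \Cref{coro_QI}. The only difference is that you spell out the regular-octagon realization explicitly, where the paper merely asserts that ``it is not hard to show'' $\pi_1(\Sigma_2)$ belongs to this computable quasi-isometry class.
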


We remark that while we know that $\msft{\pi_1(\Sigma_2)}$ is non-trivial, we do not know what is the precise set of $\Pi_1^0$ degrees that can be attained. A difficulty here is that while it is known that there are Turing machines whose immortal set has nonzero Medvedev degree, it is not known whether all $\Pi_1^0$ Medvedev degrees can be obtained in this manner. On the other hand, it seems reasonable that a hierarchical construction such as the one of~\cite{goodman-strauss_hierarchical_2010} can be coupled with Simpson's construction to produce all $\Pi_1^0$ Medvedev degrees.

\begin{remark}
    We do not know if every finitely generated and co-compact Fuchsian group admits a computable quasi-isometry to $\mathbb{H}$. By~\Cref{obs:svarcmilnor}, a sufficient condition for a Fuchsian group to admit a computable quasi-isometry to $\mathbb{H}$ is that their generators can be represented by matrices in $\operatorname{PSL}_2(\RR)$ with computable coefficients. We do not know if this holds in general, but it certainly holds for well-known examples, such as the co-compact hyperbolic triangle groups.
\end{remark}

\subsection{Groups obtained through simulation results}\label{subsec:simulation}

Let $\rho \colon G \to H$ be an epimorphism. Recall that we denote the pullback of a subshift $X\subset A^{H}$, by $\rho^{*}(X)$. A group $G$ \define{simulates} $H$ (through $\rho$) if the pullback of any effective $H$-subshift is a sofic $G$-subshift\footnote{In other references, for instance~\cite{barbieri_carrasco_rojas_2024_effective,Barbieri_Sablik_Salo_2021}, the notion of simulation is more restrictive and asks that the pullback of every computable action of $H$ on any $\Pi_1^0$ set is the topological factor of some $G$-SFT.}. 
%A comprehensive list of simulation results in the literature is~\cite{hochman_dynamics_2009,aubrun_Simulation_2013,durand_Effective_2010,barbieri_generalization_2019,barbieri_geometric_1,Barbieri_Sablik_Salo_2021,Barbier_}
\begin{proposition}\label{prop:simulation}
    Let $G,H$ be finitely generated groups such that $G$ simulates $H$. The following statements hold: 
    \begin{enumerate}
        \item $\msof{G} \supset\meff{H}$.
        \item If $H$ is infinite and recursively presented, then $\msft{G}$ contains a Medvedev degree which is bounded below by the maximal $\Pi_1^0$ Medvedev degree.
        \item If $H$ is infinite and has decidable word problem, $\msof{G}$ contains the set of all $\Pi_1^0$ Medvedev degrees.
        %\item If $G$ is recursively presented and $H$ is infinite and has decidable word problem, $\msof{G}$ is the set of all $\Pi_1^0$ Medvedev degrees.
    \end{enumerate}
   \end{proposition}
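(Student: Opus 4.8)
The plan is to reduce all three items to the transference lemma \Cref{lema:medvedev_degree_of_pullback} together with two known classifications of $\meff{H}$, so I do not expect a genuine obstacle; the only subtle point is flagged at the end.

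For item (1), let $\rho\colon G\to H$ be the epimorphism witnessing that $G$ simulates $H$. Given any effective $H$-subshift $X$, the definition of simulation says that the pullback $\rho^*(X)$ is a sofic $G$-subshift, hence $m(\rho^*(X))\in\msof{G}$. By \Cref{lema:medvedev_degree_of_pullback} we have $m(\rho^*(X))=m(X)$, so $m(X)\in\msof{G}$. Letting $X$ range over all effective $H$-subshifts yields $\meff{H}\subset\msof{G}$.

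For item (2), assume $H$ is infinite and recursively presented. By \Cref{cor:all_infinite_rp_groups_havenonzero_effective_medvedev_degree}, $H$ admits a nonempty effective subshift $X$ whose Medvedev degree equals the maximal $\Pi_1^0$ degree. Then $\rho^*(X)$ is a nonempty sofic $G$-subshift with $m(\rho^*(X))=m(X)$ by the argument above. Since $\rho^*(X)$ is sofic, there is a $G$-SFT $Z$ together with a topological factor map onto it, and \Cref{prop:basic-properties} gives $m(Z)\geq m(\rho^*(X))=m(X)$. Hence $\msft{G}$ contains the degree $m(Z)$, which is bounded below by the maximal $\Pi_1^0$ degree. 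For item (3), assume $H$ is infinite with decidable word problem. Then \Cref{teo:nicarrasco_eff_pi01} says $\meff{H}$ is precisely the class of all $\Pi_1^0$ Medvedev degrees, so by item (1) we get $\msof{G}\supset\meff{H}$, which therefore contains every $\Pi_1^0$ degree.

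The one point deserving care is item (2): simulation only yields that the pullback is \emph{sofic}, so passing to an SFT cover costs us the exact degree and leaves only the lower bound $m(Z)\geq m(X)$. This is exactly why the statement reads ``bounded below by the maximal $\Pi_1^0$ degree'' rather than ``equal to'', and why, in contrast with item (3), one cannot conclude here that $\msft{G}$ is all of the $\Pi_1^0$ degrees without a positive answer to the equal-degree SFT extension question \Cref{question:equal_degree_extension}.
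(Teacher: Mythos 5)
Your proof is correct and follows essentially the same route as the paper: item (1) via the definition of simulation plus \Cref{lema:medvedev_degree_of_pullback}, item (2) via \Cref{cor:all_infinite_rp_groups_havenonzero_effective_medvedev_degree} and an SFT extension using \Cref{prop:basic-properties}, and item (3) via \Cref{teo:nicarrasco_eff_pi01}. Your closing remark about why item (2) only yields a lower bound matches the paper's own discussion of this limitation.
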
 
    \begin{proof}
        Let $X\subset A^{H}$ be any effective subshift.  As $G$ simulates $H$, we have that $\rho^*(X)$ is sofic. By~\Cref{lema:medvedev_degree_of_pullback} we have that $m(X) = m(\rho^*(X))$ and thus $\msof{G} \supset\meff{H}$.

        Now, let us suppose that $H$ is infinite and recursively presented. By~\Cref{cor:all_infinite_rp_groups_havenonzero_effective_medvedev_degree} we have that $\meff{H}$ contains the maximal $\Pi_1^0$ Medvedev degree, and thus by the first item, so does $\msof{G}$. If we let $Y$ be a sofic $G$-shift with this degree, any $G$-SFT extension $X$ will satisfy $m(X)\geq m(Y)$ by~\Cref{prop:basic-properties}.

        Finally, suppose that $H$ is infinite and with decidable word problem. By~\Cref{teo:nicarrasco_eff_pi01}, we have that $\meff{H}$ is the set of all $\Pi_1^0$ Medvedev degrees, and thus as $\msof{G} \supset\meff{H}$ we get that $\msof{G}$ also contains the set of all $\Pi_1^0$ Medvedev degrees.\end{proof}

\section{Further remarks and questions}\label{section:questions}

\subsection{Conjectures on the Medvedev degrees of classes of subshifts}

For every finitely generated and recursively presented group $G$, we have that $\meff{G}$ is contained in the set of $\Pi_1^0$ degrees (\Cref{prop:msft-and-msoff-are-contained-in-Pi-1-degrees}).  Furthermore, \Cref{cor:all_infinite_rp_groups_havenonzero_effective_medvedev_degree} shows that it always contains the maximal $\Pi_1^0$ degree. Thus the following question arises naturally:
    \begin{question}\label{question:effective_subshifts_degrees_on_rp}
        Let $G$ be a finitely generated group that is infinite and recursively presented. Is it true that effective subshifts on $G$ attain all $\Pi_1^0$ Medvedev degrees?
    \end{question}

In fact, with our current understanding, in every recursively presented group where $\msft{G}$ is known, it is either trivial or it coincides with the class of all $\Pi_1^0$ Medvedev degrees. This leads us to propose the following conjecture.

    \begin{conjecture}\label{conjecture_allSFTSarePi10completeclasses}
    Let $G$ be an infinite, finitely generated and recursively presented group. $G$ is not virtually free if and only if $\msft{G}$ is the set of all $\Pi_1^0$ Medvedev degrees.
\end{conjecture}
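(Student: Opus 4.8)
Towards such a proof, note first that the backward implication is already available: if $G$ is virtually free then $\msft{G}=\{0_\M\}$ by~\Cref{prop:virtually_free_have_0_degree}, and since there are nonzero $\Pi_1^0$ degrees (for instance the PA-complete degree), $\msft{G}$ cannot be the whole class of $\Pi_1^0$ degrees; contrapositively, $\msft{G}$ being all $\Pi_1^0$ degrees forces $G$ not virtually free. So the substance is the forward implication: given $G$ infinite, finitely generated, recursively presented and \emph{not} virtually free, one must show $\msft{G}$ is exactly the class of $\Pi_1^0$ Medvedev degrees. The upper bound $\msft{G}\subseteq\Pi_1^0\text{-degrees}$ is~\Cref{prop:msft-and-msoff-are-contained-in-Pi-1-degrees}, so the plan reduces to the lower bound: realizing every $\Pi_1^0$ degree as the degree of a nonempty $G$-SFT.

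The first phase of the plan is a transference reduction. It would suffice, for the given $G$, to produce any one of: (a) a subgroup $H\leqslant G$ with decidable membership problem and $\msft{H}$ equal to all $\Pi_1^0$ degrees (then~\Cref{cor:subgroups_decidable_membership} finishes); (b) a quotient $G\twoheadrightarrow H$ with finitely generated kernel and $\msft{H}$ all $\Pi_1^0$ degrees (\Cref{prop:quotients_fg}); (c) when $G$ is moreover finitely presented with decidable word problem, a group $H$ computably quasi-isometric to $G$ with $\msft{H}$ all $\Pi_1^0$ degrees (\Cref{coro_QI}); or (d) a finitely presented group $H$ admitting a computable translation-like action on $G$ with decidable orbit membership problem and $\msft{H}$ all $\Pi_1^0$ degrees (\Cref{prop:translation-like-actions-and-medvedev-degrees}). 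The base case feeding all four is $\ZZ^2$ via Simpson's theorem (\Cref{simpson_result}), and one bootstraps, using any newly established example as a future target $H$. This phase already disposes of everything handled in~\Cref{section:classification}: groups containing $\ZZ^2$ with decidable membership, virtually polycyclic non-virtually-cyclic groups, direct products of two infinite groups with decidable word problem, branch groups with decidable word problem, and the computable quasi-isometry class of any example already in hand.

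The second phase is the hard core: the groups not reached this way, morally those ``far from containing a grid'' — torsion groups such as free Burnside groups of large exponent, Tarski monsters, and hyperbolic (or relatively hyperbolic) non-virtually-free groups with no $\ZZ^2$ subgroup that are not computably quasi-isometric to a handled example. Here a new direct construction seems unavoidable. The only available geometric hook is that, by a theorem of Manning, ``not virtually free'' is equivalent to the Cayley graph of $G$ not being quasi-isometric to a tree, i.e.\ to the failure of the bottleneck property: there are pairs of vertices, geodesic midpoints, and paths between the vertices escaping arbitrarily large balls around the midpoints. The plan would be to turn this coarse looping into a substrate carrying computation, imitating Kari's encoding of piecewise-affine maps on the binary tiling of $\mathbb{H}$ together with the immortal-Turing-machine mechanism of Hooper and Jeandel already used in~\Cref{thm:hyperbolic_qi_medvedev}, and then to stack a hierarchical, self-similar layer in the style of Goodman--Strauss, Hochman--Meyerovitch and Simpson to pass from a single nontrivial SFT to the full spectrum of $\Pi_1^0$ degrees.

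The step I expect to be the genuine obstacle is precisely this construction: the bottleneck property fails only \emph{coarsely}, whereas SFT rules are local and uniform across $G$, and there is at present no structure theory of non-virtually-free groups fine enough to localize a region on which to run a Turing computation. For this reason I would split the hard core into (i) the weaker statement~\Cref{conj:uncomputableconfig} that every non-virtually-free finitely generated group carries some nonempty SFT of nonzero Medvedev degree — which would already settle the Carroll--Penland and Ballier--Stein conjectures and is itself open — to be attacked via a bottleneck-based simulation of an immortal-set computation; and (ii) the upgrade to all $\Pi_1^0$ degrees, obtained by running the construction of (i) over a self-similar hierarchical skeleton, which I expect to be the more routine half once (i) is in place. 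Finally, the genuinely non-finitely-presented recursively presented case would need extra care in phase one, since the quasi-isometry mechanism (c) is then unavailable and one must lean on (a), (b), (d) or a direct construction throughout.
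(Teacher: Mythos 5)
There is no proof of this statement in the paper to compare against: the statement is \Cref{conjecture_allSFTSarePi10completeclasses}, which the authors explicitly pose as an open conjecture and which they themselves say is likely ``rather distant with the existing techniques.'' Your proposal is therefore not, and could not be, a complete proof. What you have written correctly assembles the parts that \emph{are} known: the backward implication follows from \Cref{prop:virtually_free_have_0_degree} together with the existence of nonzero $\Pi_1^0$ degrees; the upper bound is \Cref{prop:msft-and-msoff-are-contained-in-Pi-1-degrees}; and your ``first phase'' of transference from $\ZZ^2$ via \Cref{simpson_result}, \Cref{cor:subgroups_decidable_membership}, \Cref{prop:quotients_fg}, \Cref{prop:translation-like-actions-and-medvedev-degrees} and \Cref{coro_QI} is exactly how the paper handles polycyclic groups, direct products, branch groups and the surface-group quasi-isometry class. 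To your credit, you flag the genuine obstacle yourself.

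The gap is your ``second phase,'' which is the entire mathematical content of the forward implication for groups not reached by transference (torsion groups, Tarski monsters, hyperbolic groups without $\ZZ^2$ subgroups, and in general any non-virtually-free group with no computable geometric or algebraic link to a handled example). The bottleneck-property idea is a heuristic, not an argument: as you note, the failure of the bottleneck property is a coarse, non-uniform phenomenon, and nothing in the paper (or in the literature it cites) converts it into local SFT rules carrying a computation. Moreover, even for the one genuinely new class the paper does reach, groups quasi-isometric to $\mathbb{H}$, the paper only obtains a \emph{single} nonzero degree (\Cref{thm:hyperbolic_qi_medvedev}), not all $\Pi_1^0$ degrees; the upgrade to the full spectrum is blocked there by \Cref{question:immortality}, which is itself open. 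So even your claim that part (ii), ``the upgrade to all $\Pi_1^0$ degrees, is the more routine half,'' is not supported: the paper cannot currently perform that upgrade even for $\pi_1(\Sigma_2)$. Your text should be read as a reasonable research program, consistent with the weaker open \Cref{conj:uncomputableconfig}, rather than as a proof or a proof sketch.
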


The condition that $G$ is recursively presented in~\Cref{conjecture_allSFTSarePi10completeclasses} is natural, as that gives the upper bound that $\msft{G}$ is contained in the set of all $\Pi_1^0$ Medvedev degrees, however, this is still wide open even in the case of groups with decidable word problem.
    
We remark that in the case of a recursively presented group, \Cref{prop:simulation} provides us with an SFT which achieves the maximal $\Pi_1^0$ Medvedev degree, but a priori it does not give us a tool to classify $\msft{G}$. Indeed, we can only conclude that for every sofic subshift $Y$ there exists a subshift of finite type with Medvedev degree \textit{at least} $m(Y)$. Although it seems unlikely, it might be the case that the lattice $\msft{G}$ may contain gaps which do not occur in $\msof{G}$.

\begin{question}\label{question:equal_degree_extension}
     Let $G$ be a recursively presented group and $Y$ a sofic $G$-subshift. Does $Y$ admit a $G$-SFT extension of equal Medvedev degree? In particular, is it true that $\msft{G}=\msof{G}$?
\end{question}

A positive answer to~\Cref{question:equal_degree_extension} would provide strong evidence towards~\Cref{conjecture_allSFTSarePi10completeclasses}.

\subsection{Immortal sets of Turing machines and hyperbolic groups}

In~\Cref{subsec:QI_hyperbolic} we mentioned a construction of Jeandel that produces a Turing machine which halts on every computable starting configuration, but loops in a $\Pi_1^0$ set of starting configurations with nonzero Medvedev degree. It is currently unknown if every $\Pi_1^0$ Medvedev degree can be obtained this way.

\begin{question}\label{question:immortality}
    Is it true that immortal sets of Turing machines attain all $\Pi_1^0$ Medvedev degrees?
\end{question}

If such a result were proven, it would imply by the same argument sketched in~\Cref{thm:hyperbolic_qi_medvedev} that the surface group $\pi_1(\Sigma_2)$ admits SFTs which attain all Medvedev degrees. In fact, it may be possible to adapt a construction of Bartholdi~\cite{bartholdi_2023_hyperbolic} to extend said result to all non virtually free hyperbolic groups. This would provide a way to settle~\Cref{conjecture_allSFTSarePi10completeclasses} for hyperbolic groups.

%\begin{question}
 %   Let $G$ be a non virtually free hyperbolic group. Is it true that $\msft{G}$ is the set of all $\Pi_1^0$ Medvedev degrees?
%\end{question}

%-$\msft{G}$ vs $\msof{G}$

%-Groups where $\msft{G}$ contains no $\Pi_1^0$ degree besides the trivial one

%-Discuss how things relativize to groups that are no recursively presented

\subsection{Medvedev degrees, aperiodic subshifts, and the domino problem}
We now observe that for a fixed finitely generated group, the existence of SFTs with nonzero Medvedev degree implies both the existence of weakly aperiodic SFTs, and the undecidability of the domino problem.

\begin{proposition}\label{prop:med_implies_WA_conjecture}
        Let $G$ be a finitely generated group, and let $X$ be a nonempty $G$-subshift. If $m(X)>0_{\M}$, then $X$ is weakly aperiodic, that is, $G\curvearrowright X$ has no finite orbits. 
    \end{proposition}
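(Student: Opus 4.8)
The plan is to prove the contrapositive: if $G \curvearrowright X$ has a finite orbit, then $m(X) = 0_\M$. So suppose $x \in X$ has a finite orbit, meaning the stabilizer $H = \operatorname{Stab}_G(x) = \{g \in G : gx = x\}$ has finite index in $G$. First I would observe that since $[G:H] < \infty$ and $G$ is finitely generated, $H$ is finitely generated as well, and moreover $H$ has decidable membership problem in $G$ in the following weak sense we actually need: the point $x$ is constant on each coset of $H$, so $x$ is entirely determined by finitely many values, namely its values on a set of coset representatives $t_1, \dots, t_r$ with $\bigsqcup_i t_i H = G$.

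The key step is then to show that $x$ (equivalently, its pullback $\widehat x \in \widehat X \subset A^{F(S)}$) is a computable point. Fix a finite symmetric generating set $S$ for $G$. The pullback $\widehat x$ is the configuration $w \mapsto x(\rho(w))$ on $F(S)$, where $\rho \colon F(S) \to G$ is the canonical projection. Since $x$ is $H$-periodic, $\widehat x(w)$ depends only on which coset $\rho(w) H$ the element $\rho(w)$ lies in. Now, the Cayley graph of $G$ relative to $S$ acts on the finite set of cosets $\{t_1 H, \dots, t_r H\}$ by right(-ish) multiplication — equivalently, reading the word $w$ letter by letter traces a deterministic walk on this finite coset graph starting at the trivial coset. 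This walk is computable: we only need the finite multiplication table recording, for each coset $t_i H$ and each generator $s \in S$, which coset $t_i s H$ equals (this is exactly the coset automaton / Schreier graph of $H$ in $G$, which is a finite object). Given this finite table, on input $w \in S^*$ we compute the endpoint coset $t_j H$ of the walk and output the recorded value $x(t_j) \in A$. This is a total computable function $F(S) \to A$ describing $\widehat x$, so $\widehat x$ is a computable element of $\widehat X$, hence $m(X) = m(\widehat X) = 0_\M$.

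I do not expect a serious obstacle here; the statement is essentially the observation underpinning the classical fact about $\mathbb{Z}$-SFTs. The one point requiring a little care is that the argument must run \emph{purely at the level of the abstract subshift}, with no assumption that $G$ has decidable word problem: the Schreier coset graph of a finite-index subgroup is always a concrete finite labeled graph, and once we are handed that finite graph together with the finitely many alphabet values $x(t_1), \dots, x(t_r)$, the description of $\widehat x$ is manifestly computable regardless of how hard the word problem of $G$ is. (We are allowed to use this finite data because computability of $\widehat x$ only requires \emph{some} algorithm, and a finite amount of nonuniform information is free.) The mild subtlety is thus bookkeeping rather than mathematical: confirming that tracing the walk on the coset graph genuinely computes $\rho(w)H$ from $w$, which is immediate from the definition of the Schreier graph. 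Having established $m(X) = 0_\M$ whenever there is a finite orbit, the contrapositive gives the proposition.
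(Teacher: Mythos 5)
Your proposal is correct and follows essentially the same route as the paper: both prove the contrapositive by showing that a configuration with finite orbit is computable because it factors through a finite structure (you use the Schreier coset graph of the finite-index stabilizer, while the paper passes to the quotient by the normal core and invokes the computability of homomorphisms onto a finite group --- the same finite-automaton computation in the end). The only nitpick is a left/right slip: with the convention $(gx)(h)=x(g^{-1}h)$, a point fixed by $H$ is constant on right cosets $Hg$, so the walk should run on right cosets with transitions $Hg\mapsto Hgs$; as you yourself note, this is pure bookkeeping.
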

    \begin{proof}
        We prove that if $X$ has a configuration with finite orbit, then $m(X)=0_{\M}$. Indeed, if $X$ has a configuration with finite orbit, then the same is true for the pullback $\widehat X$. Thus it suffices to prove that a configuration in $A^{F(S)}$ with finite orbit must be computable. 
        
        Indeed, if $x_0\in A^{F(S)}$ has finite orbit, then there is a finite group $K$, a group epimorphism $\phi\colon F(S)\to K$, and a configuration $y_0\in A^K$ such that the pullback $\phi^\ast (y_0)$ is equal to $x_0$ (where $\phi^\ast (y)(g)=y(\phi(g)),\ g\in F(S)$). As $A^K$ is finite it follows that $y_0$ is computable and thus that $x_0$ is computable: given $g\in F(S)$ we can compute $x_0(g)$ by first computing $\phi(g)\in K$ (recall that every homomorphism of finitely generated groups is computable), and setting $x_0(g)=y_0(\phi(g))$.\end{proof}
        %The group $K$ and the homomorphism $\phi$ can be obtained as follows. Given $x_0$, we let $O$ be the finite set $\{gx_0\mid g\in F(S)\}$, we let $\operatorname{Sym}(O)$ be the group of permutations of $O$, we let $\phi\colon F(S)\to \operatorname{Sym}(O)$ be the homomorphism defined by $g\mapsto (x\mapsto gx)$, and we let $K$ be the image of $\phi$.       

The domino problem of a group $G$ is the algorithmic problem of computing, from a finite set of pattern codings, if the associated $G$-SFT is nonempty. 
        
\begin{proposition}\label{prop:med_implies_domino}
    Let $G$ be a finitely generated group which admits a nonempty $G$-SFT $X$ with $m(X)>0_{\M}$. Then the domino problem for $G$ is undecidable. 
\end{proposition}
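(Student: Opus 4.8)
The plan is to prove the contrapositive-style claim by reduction: assuming the domino problem for $G$ were decidable, I would derive that every nonempty $G$-SFT has a computable configuration, contradicting the hypothesis that $X$ has $m(X) > 0_{\M}$. The key observation is that decidability of the domino problem, combined with the fact that $G$-SFTs are closed under intersection with cylinder-type constraints, gives a procedure to ``search'' for a configuration of a nonempty SFT coordinate by coordinate.

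First I would fix a finite generating set $S$ of $G$ and work with the pullback $\widehat{X} \subseteq A^{F(S)}$, recalling from the excerpt that $m(X) = m(\widehat X)$, so it suffices to produce a computable point in $\widehat X$. Since $X$ is an SFT determined by a finite set of forbidden patterns $\mathcal F$, I can enumerate $F(S)$ as $g_0 = 1, g_1, g_2, \dots$ and attempt to define $x(g_0), x(g_1), \dots$ greedily, at each stage choosing the lexicographically least symbol $a \in A$ such that the SFT $X$ restricted by the finitely many already-committed pattern constraints $x(g_j) = a_j$ for $j \le n$ (together with $\mathcal F$) remains nonempty. The crucial point is that ``this finitely-constrained SFT is nonempty'' is exactly an instance of the domino problem for $G$: adjoining the constraint $x(g_j) = a_j$ amounts to adding finitely many forbidden patterns (forbidding every symbol other than $a_j$ at position $g_j$), which is again a finite pattern-coding description, so the hypothetical decision procedure answers it. Here one uses that $G$ is recursively presented (indeed, the hypothesis $m(X) > 0_\M$ together with the earlier remark that immortal/uncomputable configurations force this; or one simply notes that without recursive presentation the very notion of the domino problem already encodes enough) so that pattern codings can be manipulated algorithmically. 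A compactness argument (König's lemma / finite intersection property for the closed sets $X \cap \bigcap_{j\le n}[x(g_j)=a_j]$) shows that the greedy choice never gets stuck: at each stage at least one symbol keeps the constrained SFT nonempty, and the nested nonempty closed sets have a point in their intersection, which is precisely the configuration $x$ we are building.

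The configuration $x$ produced this way is computable by construction: $x(g_n)$ is obtained by running the domino-problem decision procedure at most $|A|$ times on explicitly computable inputs. Pulling back, $\widehat{x} \in \widehat{X}$ is a computable point, so $m(X) = m(\widehat X) = 0_{\M}$, contradicting $m(X) > 0_{\M}$. Hence the domino problem for $G$ is undecidable. The main obstacle I anticipate is bookkeeping the translation between ``imposing the constraint $x(g) = a$'' and ``a valid finite pattern-coding input to the domino problem'': one must be careful that positions are specified by words in $S^*$ (not abstract group elements), that forbidding ``all symbols but $a$'' at a single position is expressible with finitely many single-site forbidden patterns, and that the decision procedure is being fed the conjunction of the original $\mathcal F$ with these new patterns. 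None of this is deep, but it is where the recursive-presentation hypothesis is genuinely used, and it is the step that deserves careful writing.
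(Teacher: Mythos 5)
There is a genuine gap at the heart of your reduction. You claim that adjoining the constraint $x(g_j) = a_j$ ``amounts to adding finitely many forbidden patterns (forbidding every symbol other than $a_j$ at position $g_j$), which is again a finite pattern-coding description'' and hence an instance of the domino problem. This is false: by definition a forbidden pattern of a $G$-SFT is forbidden at \emph{every} translate, so there is no way to forbid a symbol at a single distinguished position. If you literally forbid the single-site patterns ``any symbol other than $a_j$,'' shift-invariance forces the whole configuration to be constant. The set $X \cap \bigcap_{j\le n}[x(g_j)=a_j]$ is not $G$-invariant, hence not a subshift, and its nonemptiness (the ``seeded'' or extension problem) is not an instance of the domino problem; the reduction you need simply is not available. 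This is exactly the difficulty the paper's proof is designed to circumvent: instead of pinning values at positions, it works with sets $L_n \subset A^{B_n}$ of \emph{allowed} patterns on balls and, using the domino-problem oracle, computes at each stage an $L_n$ that is \emph{minimal for inclusion} subject to $Y(L_n)\neq\varnothing$. Minimality is the key trick: if some $p\in L_n$ did not occur in any configuration of $Y(L_n)$, one could delete it and contradict minimality, so every pattern in $L_n$ is genuinely realizable, and the same argument shows every $p\in L_n$ extends to some $q\in L_{n+1}$. Only then does the greedy extraction of a computable point go through. Your compactness/K\"onig's-lemma step presupposes that the oracle certifies realizability of partial configurations, which the domino problem alone does not do.

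A secondary issue: your handling of the word-problem hypothesis is muddled. The hypothesis $m(X)>0_{\M}$ does not imply that $G$ is recursively presented, and ``the domino problem already encodes enough'' is not an argument. The paper deals with this cleanly by splitting into cases: if the word problem of $G$ is undecidable then the domino problem is already undecidable by a known result, and otherwise one may assume the word problem decidable and manipulate honest patterns (supported on balls of the group) rather than pattern codings. You should adopt the same case split rather than trying to extract recursive presentability from the Medvedev hypothesis.
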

\begin{proof}
    It is well known that if $G$ has undecidable word problem, then $G$ has undecidable domino problem~\cite[Theorem 9.3.28]{aubrun_Domino_2018}. Therefore we will assume that $G$ has decidable word problem and perform computations directly on patterns instead of pattern codings. We will prove that if the domino problem for $G$ is decidable, and $X$ is a nonempty $G$-SFT on alphabet $A$, then $m(X)=0_{\M}$.

    Fix a word metric on $G$ and let $B_n=\{g\in G : |g|\leq n\}$. Given $L \subset A^{B_n}$, we denote by $Y(L)$ the $G$-SFT given by \[ Y(L) = \{ y \in A^G : (gy)|_{B_n}\in L \mbox{ for every } g \in G\}.\] 

    In other words, $Y(L)$ is the $G$-SFT defined by the forbidden patterns $\mathcal{F} = A^{B_n}\setminus L$. Fix $n_0 \in \NN$ and consider a sequence $(L_n)_{n\geq n_0}$ with the following properties: $L_n\subset A^{B_n}$ is nonempty, every pattern in $L_n$ appears in $X$, and finally, for every pattern $p\in L_n$, there exists a pattern $q\in L_{n+1}$ whose restriction to $B_n$ is $p$. We remark that if a computable sequence $(L_n)_{n \geq n_0}$ as above exists, then we can extract a computable configuration $x$ by letting $x|_{B_{n_0}} = p_{n_0} \in L_{n_0}$ arbitrary, and then inductively for $n>n_0$ we choose $p_n$ as the lexicographically minimal element such that $p_n|_{B_{n-1}} = x|_{B_{n-1}}$ and set $x|_{B_n} = p_n$. It is clear that $x$ is computable and $x \in \bigcap_{n \geq n_0} Y(L_n) \subset X$. It follows that if such a computable sequence exists, then $m(X)=0_{\M}$. 

    As $X$ is a $G$-SFT, there exists a smallest $n_0\in \NN$ for which there is $D\subset A^{B_{n_0}}$ with $X = Y(D)$. Let $L_{n_0}\subset D$ be minimal (for inclusion) such that $Y(L_{n_0})$ is nonempty. Notice that $Y(L_{n_0})\subset X$ and that every pattern in $L_{n_0}$ must appear in $Y(L_{n_0})$.
    
    Now we define the sequence. Set $L_{n_0}$ as above. For $n > n_0$ we perform the following recursive step. Suppose that $L_{n-1}$ has already been computed and that it is minimal such that $Y(L_{n-1})\subset X$ is nonempty. Compute the set $D_n$ of all $p \in A^{B_n}$ whose restriction to $B_{n-1}$ lies in $L_{n-1}$. Then clearly $Y(D_n) = Y(L_{n-1})\subset X$. Using the algorithm for the domino problem of $G$, compute $L_n\subset D_n$ which is minimal such that $Y(L_n)$ is nonempty. 

    This sequence $(L_n)_{n \geq n_0}$ is computable. We claim it satisfies the required properties. By construction, for each $n \geq n_0$, $L_n\subset A^{B_n}$ is nonempty. Minimality implies that all patterns in $L_{n}$ appear in $Y(L_{n})\subset X$. Finally, for every $p\in L_n$, there is $q\in L_{n+1}$ whose restriction to $B_{n}$ equals $p$. Indeed, if this was not the case, then the set $R_{n}$ defined by restricting all patterns in $L_{n+1}$ to $B_n$ would be properly contained in $L_n$, and $\emptyset \neq Y(L_{n+1})\subset Y(R_n)$, contradicting the minimality of $L_{n}$. \end{proof}

These results lead us to conjecture the following:
\begin{conjecture}\label{conj:uncomputableconfig}
    Let $G$ be a finitely generated group. $G$ is virtually free if and only if every nonempty $G$-SFT has Medvedev degree $0_{\M}$.
\end{conjecture}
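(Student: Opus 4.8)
If $G$ is virtually free then every nonempty $G$-SFT has Medvedev degree $0_\M$ by \Cref{prop:virtually_free_have_0_degree}, which reduces via commensurability (\Cref{prop:commensurable_same_degrees}) to free groups and there uses the tree structure of the Cayley graph to build a computable configuration greedily. So all the content is the converse: taking the contrapositive, the task is to show that \emph{every finitely generated group $G$ that is not virtually free admits a nonempty $G$-SFT $X$ with $m(X)>0_\M$.} The plan is to reduce this, as far as possible, to groups where such an SFT is already known, using the transference machinery of \Cref{section:transference}: a positive SFT degree survives passage to finitely generated subgroups (\Cref{free-extensions}) and to quotients by finitely generated normal subgroups (\Cref{prop:quotients_fg}); for finitely presented groups it is a quasi-isometry invariant (\Cref{admitir-sft-incalculable-es-invariante-de-qi}); it is transported along translation-like actions of finitely presented groups (\Cref{prop:translation-like-actions-and-medvedev-degrees}) and along simulation (\Cref{prop:simulation}). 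The anchors currently at our disposal are $\ZZ^2$ (\Cref{simpson_result}), direct products of two infinite finitely generated groups (\Cref{thm:direct_products_nontrivial_medvedev_degrees}), and groups quasi-isometric to $\mathbb{H}$ (\Cref{thm:hyperbolic_qi_medvedev}).

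\textbf{Step 1: the finitely presented case.} I would first isolate finitely presented groups, where Dunwoody's accessibility theorem applies: a finitely presented group $G$ that is not virtually free is the fundamental group of a finite graph of groups over finite edge groups in which some vertex group $V$ is one-ended. Since the graph is finite and the edge groups are finite, $V$ is again finitely presented, and it embeds in $G$, so by \Cref{free-extensions} it is enough to produce a positive-degree SFT on $V$. Thus the finitely presented case of the conjecture reduces to the statement that \emph{every one-ended finitely presented group admits an SFT of nonzero Medvedev degree}, and for this one would try to detect in each such group one of the anchors above: a $\ZZ^2$ subgroup or a translation-like $\ZZ^2$-action, a direct-product substructure, a finitely generated quotient onto one of these, or a quasi-isometry to $\mathbb{H}$. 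The hyperbolic subcase would be settled once \Cref{thm:hyperbolic_qi_medvedev} is pushed from $\pi_1(\Sigma_2)$ to all non-virtually-free hyperbolic groups, plausibly by adapting Bartholdi's construction~\cite{bartholdi_2023_hyperbolic} (together with resolving \Cref{question:immortality} if one wants the full lattice of degrees, not merely a nonzero one).

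\textbf{Where I expect the real difficulty.} The obstacle is twofold. First, there is no known intrinsic feature common to all one-ended finitely presented groups that forces a complicated SFT: hyperbolic groups are already instructive, since they contain no $\ZZ^2$ and, unless they are virtually surface groups, are not covered by \Cref{thm:hyperbolic_qi_medvedev}, so they require the Bartholdi-type approach just mentioned, and similar issues arise for many groups acting on CAT(0) spaces. Moreover Seward's theorem only yields a translation-like $\ZZ$-action, whose SFTs all have degree $0_\M$, so it is useless here, and whether every non-virtually-free finitely generated group admits a translation-like $\ZZ^2$-action is itself open. Second, accessibility is unavailable outside the finitely presented world, so finitely generated infinite torsion groups (Burnside groups, Tarski monsters) and, more generally, finitely generated groups with undecidable word problem — where even the existence of a nonempty SFT with no computable configuration is delicate — would have to be handled by genuinely new means.

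\textbf{A lower bound on the difficulty.} One should also note that the statement is at least as strong as two longstanding open conjectures: via \Cref{prop:med_implies_WA_conjecture} it implies Carroll and Penland's conjecture on weakly aperiodic SFTs~\cite{carroll_Periodic_2015}, and via \Cref{prop:med_implies_domino} it implies Ballier and Stein's conjecture on the undecidability of the domino problem~\cite{ballier_domino_2018}. So a complete proof appears out of reach of current techniques, and the realistic program is incremental — enlarging the class of non-virtually-free groups for which the converse is verified — with the hyperbolic groups (via a positive answer to \Cref{question:immortality} and an analogue of \Cref{thm:hyperbolic_qi_medvedev} valid for all non-virtually-free hyperbolic groups) being the most promising immediate target.
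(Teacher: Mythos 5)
The statement you were asked to prove is labelled as a conjecture in the paper, and the paper itself offers no proof of it; the authors explicitly state that a positive solution ``would be rather distant with the existing techniques.'' Your proposal is therefore correctly calibrated: you prove the only genuinely available direction --- that virtually free groups have only degree-$0_\M$ SFTs --- by citing \Cref{prop:virtually_free_have_0_degree} (with \Cref{prop:commensurable_same_degrees} reducing to the free case), which is exactly the paper's argument, and you honestly present the converse as an open program rather than a theorem. Your survey of that program is accurate and consistent with the paper: the transference tools (\Cref{free-extensions}, \Cref{prop:quotients_fg}, \Cref{admitir-sft-incalculable-es-invariante-de-qi}, \Cref{prop:translation-like-actions-and-medvedev-degrees}, \Cref{prop:simulation}), the known anchors ($\ZZ^2$, direct products, groups quasi-isometric to $\mathbb{H}$), and the two implied conjectures of Carroll--Penland and Ballier--Stein via \Cref{prop:med_implies_WA_conjecture} and \Cref{prop:med_implies_domino} all match the paper's \Cref{Medvedev-conjecture-is-pulent} and surrounding discussion.

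One substantive addition you make that is not in the paper is the reduction, via Dunwoody accessibility and the Karrass--Pietrowski--Solitar characterization of virtually free groups, of the finitely presented case to one-ended finitely presented vertex groups; combined with the corollary to \Cref{free-extensions} this reduction is sound, and it is the standard first move in the analogous domino-problem conjecture. Your identification of the genuine obstacles is also correct: Seward's theorem only gives a translation-like $\ZZ$-action (useless since $\msft{\ZZ}=\{0_\M\}$), one-ended hyperbolic groups beyond the virtually Fuchsian case are not reached by \Cref{thm:hyperbolic_qi_medvedev}, and accessibility fails to help for general finitely generated (e.g.\ infinite torsion) groups. To be clear for the record: what you have written is not, and could not be, a proof of the conjecture; it is a correct proof of the easy implication together with an accurate account of why the other implication remains open, which is precisely the state of affairs in the paper.
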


\begin{observation}\label{Medvedev-conjecture-is-pulent}
    Note that \Cref{conj:uncomputableconfig} implies Carroll and Penland's conjecture \cite{carroll_Periodic_2015} that all infinite finitely generated groups that are not virtually cyclic admit weakly aperiodic SFTs. This simply follows from \Cref{prop:med_implies_WA_conjecture}, plus the fact that all virtually free groups that are not virtually cyclic admit weakly aperiodic SFTs \cite{piantadosi_Symbolic_2008}.

    We also note that \Cref{conj:uncomputableconfig} implies Ballier and Stein's conjecture that a finitely generated infinite group has decidable domino problem if and only if it is virtually free~\cite{ballier_domino_2018}. Indeed, this follows from~\Cref{prop:med_implies_domino} and the fact that the conjecture holds for virtually free groups.
\end{observation}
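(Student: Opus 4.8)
The plan is to derive each of the two stated implications by combining Conjecture~\ref{conj:uncomputableconfig} with the corresponding characterization that is already known for virtually free groups, using Propositions~\ref{prop:med_implies_WA_conjecture} and~\ref{prop:med_implies_domino} as the bridge between Medvedev degrees and the two combinatorial properties at stake (existence of weakly aperiodic SFTs, and decidability of the domino problem).

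First I would treat Carroll and Penland's conjecture. Assume Conjecture~\ref{conj:uncomputableconfig} and let $G$ be an infinite, finitely generated group that is not virtually cyclic. Split into two cases according to whether $G$ is virtually free. If $G$ is virtually free but not virtually cyclic, then $G$ admits a weakly aperiodic SFT by Piantadosi~\cite{piantadosi_Symbolic_2008} (this is exactly the fact quoted in the observation). If $G$ is not virtually free, then Conjecture~\ref{conj:uncomputableconfig} provides a nonempty $G$-SFT $X$ with $m(X)>0_\M$, and Proposition~\ref{prop:med_implies_WA_conjecture} shows that such an $X$ has no finite orbits, hence is weakly aperiodic. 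In either case $G$ admits a weakly aperiodic SFT, which is precisely Carroll and Penland's conjecture.

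Next I would treat Ballier and Stein's conjecture, which asserts that a finitely generated infinite group has decidable domino problem if and only if it is virtually free. The implication ``virtually free $\Rightarrow$ decidable domino problem'' is already known in the literature (it is ``the fact that the conjecture holds for virtually free groups''), so it remains to establish, assuming Conjecture~\ref{conj:uncomputableconfig}, that a group with decidable domino problem is virtually free. Contrapositively: if $G$ is infinite, finitely generated and not virtually free, then its domino problem is undecidable. Under Conjecture~\ref{conj:uncomputableconfig}, such a $G$ admits a nonempty $G$-SFT $X$ with $m(X)>0_\M$, and Proposition~\ref{prop:med_implies_domino} then yields directly that the domino problem for $G$ is undecidable. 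Combining the two directions gives the claimed equivalence.

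Both arguments are short chains of implications, so there is no genuine obstacle; the only points that need care are: (i) checking that the cited results for virtually free groups are available with the correct quantifiers in each of the two settings — in particular the virtually cyclic versus virtually free distinction in Carroll and Penland's statement, noting that virtually cyclic groups are in particular virtually free and are excluded from the conclusion anyway; and (ii) verifying that the case split ``virtually free or not'' is exhaustive, which it trivially is. No new construction beyond Propositions~\ref{prop:med_implies_WA_conjecture} and~\ref{prop:med_implies_domino} is required.
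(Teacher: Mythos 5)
Your proof is correct and follows exactly the argument the paper intends: a case split on virtual freeness, invoking Piantadosi (resp.\ the known decidability for virtually free groups) in one case and combining \Cref{conj:uncomputableconfig} with \Cref{prop:med_implies_WA_conjecture} (resp.\ \Cref{prop:med_implies_domino}) in the other. Nothing is missing; this matches the paper's own justification, just written out in more detail.
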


\subsection{Beyond $\Pi_1^0$ Medvedev degrees}
Most of our results apply to recursively presented groups, where $\msft{G}$ is contained in the set of $\Pi_1^0$ degrees. It is natural to ask what happens beyond recursively presented groups. In this case we can show that for a group whose word problem is too complex, every strongly aperiodic $G$-subshift (one such that the shift action is free) has a Medvedev degree beyond $\Pi_1^0$ degrees:
\begin{proposition}\label{prop:beyondtimeandspace}
Let $G$ be a group whose word problem is not co-recursively enumerable. For every nonempty strongly aperiodic $G$-subshift $X$ we have that $m(X)$ is not a $\Pi_1^0$ Medvedev degree.
\end{proposition}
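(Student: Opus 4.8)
The plan is to argue contrapositively: assume $m(X)$ is a $\Pi_1^0$ Medvedev degree and deduce that the word problem of $G$ is co-recursively enumerable, i.e. that $\texttt{WP}_S(G)$ is a $\Pi_1^0$ language. Fix a finite symmetric generating set $S$ and work with the pullback subshift $\widehat{X} \subset A^{F(S)}$, which is recursively homeomorphic to some $P \subset \{0,1\}^\NN$ with $m(P) = m(X)$. Since $m(X)$ is a $\Pi_1^0$ degree, $P$ is Medvedev-equivalent to a nonempty $\Pi_1^0$ set, and in particular $P$ — hence $\widehat{X}$ — has the same degree as a $\Pi_1^0$ set; we will actually want that $\widehat{X}$ itself can be taken $\Pi_1^0$, or at least that from any member of $\widehat{X}$ we can compute a point of a fixed nonempty $\Pi_1^0$ set, and conversely. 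The key dynamical input is strong aperiodicity: the shift action $G \curvearrowright X$ is free, which on the pullback $\widehat{X} \subset A^{F(S)}$ means that for every $\widehat{x} \in \widehat{X}$ and every $g \in F(S)$ with $g \widehat{x} = \widehat{x}$ we must have $\underline{g} = 1_G$; equivalently, $\underline{g} \ne 1_G$ implies there is $u \in F(S)$ with $\widehat{x}(gu) \ne \widehat{x}(u)$.

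Next I would use this to build a co-r.e. description of $\texttt{WP}_S(G)$. Given a word $w \in S^*$, I claim $\underline{w} = 1_G$ iff $w\,\widehat{x} = \widehat{x}$ for some (equivalently every) $\widehat{x} \in \widehat{X}$: the forward direction is because $\ker(\rho)$ acts trivially on $\widehat{X}$ by construction of the pullback, and the backward direction is exactly strong aperiodicity. So $w \notin \texttt{WP}_S(G)$ iff there exist $\widehat{x} \in \widehat{X}$ and $u \in F(S)$ with $\widehat{x}(wu) \ne \widehat{x}(u)$. The point is now to make this an existential-over-a-computable-search statement, which will witness $S^* \setminus \texttt{WP}_S(G)$ as recursively enumerable. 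If $\widehat{X}$ were $\Pi_1^0$ and nonempty, one could enumerate approximations to a point of $\widehat{X}$ (or just run through finite patterns that appear in $\widehat X$); but a cleaner route is: since $m(\widehat X) = m(X)$ is a $\Pi_1^0$ degree, there is a nonempty $\Pi_1^0$ set $C \subset \{0,1\}^\NN$ and partial computable functions $\Psi, \Phi$ with $\Psi(C) \subset \widehat X$ and $\Phi(\widehat X) \subset C$; taking any low or just any point of $C$ is not computable, but we do not need a computable point. Instead, to decide non-membership of $w$ in the word problem we search, over all finite patterns $p$ on finite $F \subset F(S)$ that are \emph{compatible with being extendable to a point of} $\widehat X$, for one exhibiting $p(wu) \ne p(u)$. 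Extendability of a finite pattern to $\widehat X$ is a $\Pi_1^0$ condition when $\widehat X$ is $\Pi_1^0$, which it is once we know $m(X)$ is $\Pi_1^0$ — here I would invoke that for the pullback on a free group, having $\Pi_1^0$ \emph{degree} together with being a subshift lets one replace $\widehat X$ by a recursively homeomorphic genuine $\Pi_1^0$ subshift (using that $\widehat X$ is closed and shift-invariant, and that recursive homeomorphism preserves these features up to the standard coding $\delta$).

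The main obstacle I anticipate is precisely this last bridge: passing from "$m(X)$ is a $\Pi_1^0$ Medvedev degree" (a statement about the equivalence class) to a \emph{concrete $\Pi_1^0$ presentation} of $\widehat X$ usable for an effective search. One must be careful that Medvedev equivalence to a $\Pi_1^0$ set does not automatically make $\widehat X$ itself $\Pi_1^0$ as a subset of $A^{F(S)}$ — one needs to use that $\widehat X$ is a subshift and that the coding $\delta$ between $A^\NN$ and $A^{F(S)}$ is computable, so that a $\Pi_1^0$ set in the equivalence class can be "transported" back; alternatively, one shows directly that from the mass-problem data one can co-enumerate the finite patterns that fail to extend to $X$. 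Once that is in hand, the argument closes: $w \mapsto \exists p\,(p \text{ extends to } X \text{ and } p(wu)\ne p(u) \text{ for some } u)$ is $\Sigma_1^0$ because "extends to $X$" is semidecidable in the negation and we are searching for patterns in a co-enumerated complement — more precisely, one enumerates, for each $n$, the finite set of patterns on $B_n$ that \emph{do} appear in $X$ is co-r.e., so "there is an appearing pattern witnessing $w x \ne x$ somewhere" is itself $\Sigma_1^0$; combined with the always-true direction $\underline w = 1_G \Rightarrow wx=x$, this makes $\texttt{WP}_S(G)$ co-r.e., contradicting the hypothesis. Hence $m(X)$ cannot be $\Pi_1^0$, as claimed.
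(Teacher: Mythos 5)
Your reduction of the statement to ``the word problem of $G$ is co-recursively enumerable'' and your use of strong aperiodicity (namely, $\underline{w}\neq 1_G$ if and only if no point of $\widehat X$ is fixed by $w$, while $\ker\rho$ always acts trivially on the pullback) are exactly right and match the paper. The gap is the step you yourself flag as ``the main obstacle'': you need an effective presentation to search against, and you propose to obtain one by arguing that, since $m(X)$ is a $\Pi_1^0$ degree and $\widehat X$ is a subshift, $\widehat X$ can be replaced by a recursively homeomorphic genuine $\Pi_1^0$ set. This step is unjustified and false in general: the Medvedev degree is an invariant of the uniform reducibility class only and says nothing about the descriptive complexity of the set itself. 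For instance, any closed invariant set containing a computable point has degree $0_\M$ (a $\Pi_1^0$ degree) whether or not it is $\Pi_1^0$, and a Medvedev reduction $\Phi(\widehat X)\subset C$ into a $\Pi_1^0$ set $C$ gives no way to co-enumerate the finite patterns that fail to appear in $X$. So the $\Sigma_1^0$ search you describe at the end has nothing computable to search over, and the proof does not close.

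The paper closes this gap by using the reduction in the \emph{other} direction: since $m(X)$ is a $\Pi_1^0$ degree there is a nonempty $\Pi_1^0$ set $P\subset\{0,1\}^{\NN}$ and a computable map $f\colon P\to\widehat X$, and because $\{0,1\}^{\NN}$ and $A^{F(S)}$ are recursively compact, the image $Y=f(P)$ is a nonempty $\Pi_1^0$ \emph{subset} of $\widehat X$. One never needs $\widehat X$ itself to be $\Pi_1^0$: strong aperiodicity gives $\operatorname{Fix}(w)\cap\widehat X=\emptyset$ when $\underline w\neq 1_G$, hence $\operatorname{Fix}(w)\cap Y=\emptyset$, while $\underline w=1_G$ forces $Y\subset\operatorname{Fix}(w)$ with $Y\neq\emptyset$. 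Since $\operatorname{Fix}(w)\cap Y$ is $\Pi_1^0$ and emptiness of $\Pi_1^0$ subsets of a recursively compact space is recursively enumerable, the complement of the word problem is r.e. If you replace your ``transport back a $\Pi_1^0$ presentation of $\widehat X$'' step with this image argument, the rest of your outline goes through. A secondary issue: even granting a $\Pi_1^0$ presentation, your final test should be the universal, compactness-based one --- ``for some $n$ and some stage $t$, every not-yet-excluded pattern on the ball of radius $n$ exhibits a mismatch'' --- rather than the existential ``some appearing pattern has a mismatch'', since ``appears in $X$'' is only co-r.e.\ and an existential quantifier over a co-r.e.\ condition is not $\Sigma_1^0$.
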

\begin{proof}
    Suppose that $G$ admits a strongly aperiodic subshift $X$ such that $m(X)$ is a $\Pi_1^0$ degree. We will prove that then $G$ has a co-recursively enumerable word problem. 

    Let $S$ be a symmetric set of generators for $G$. Now we will apply well-known results from computable analysis to the computable metric space $A^{F(S)}$, the reader is referred to ~\cite[Section 3]{barbieri_carrasco_rojas_2024_effective}. We start by proving that the pullback subshift $\widehat X\subset A^{F(S)}$ contains a nonempty subset that is $\Pi_1^0$. Indeed, as $m(X)$ is a $\Pi_1^0$ degree, there is a $\Pi_1^0$ set $P\subset \{0,1\}^\NN$ and a computable function $f\colon P\to \widehat X$. As the spaces $A^\NN$ and $A^{F(S)}$ are recursively compact, the computable image of a $\Pi_1^0$ set must be a $\Pi_1^0$ set. It follows that $Y=f(P)$ is a $\Pi_1^0$ subset of $\widehat{X}$. 
    
    For $w \in F(S)$, consider the stabilizer $\operatorname{Fix}(w) = \{ x \in A^{F(S)} : wx = x\}$, and observe that it is a $\Pi_1^0$ set. As $X$ is strongly aperiodic, a word $w\in S^\ast$ satisfies $\underline{w}\ne 1_G$ if and only if $\operatorname{Fix}(w)\cap Y$ is empty. As both $Y$ and $\operatorname{Fix}(w)$ are $\Pi_1^0$ sets, it follows that $\operatorname{Fix}(w)\cap Y$ is $\Pi_1^0$. Finally, as $A^{F(S)}$ is recursively compact, the collection of descriptions of $\Pi_1^0$ sets which are empty is recursively enumerable (see~\cite[Remark 3.13]{barbieri_carrasco_rojas_2024_effective}) and thus this gives an algorithm to enumerate the $w \in S^{*}$ which do not represent $1_{G}$.
\end{proof}

We remark that in the case where $G$ is a finitely generated and recursively presented group which admits a strongly aperiodic effective $G$-subshift, then~\Cref{prop:beyondtimeandspace} implies that $G$ has decidable word problem. This recovers a result by Jeandel~\cite[Corollary 2.7]{jeandel_notes_subshift_groups}.

We also remark that using simulation theorems it is possible to construct groups with non co-recursively enumerable word problem and which admit strongly aperiodic SFTs. This in particular shows that there exist finitely generated groups $G$ for which $\msft{G}$ is not contained in the $\Pi_1^0$ degrees. This result will appear in an upcoming article of the first author and Salo.

\Addresses

\bibliographystyle{abbrv}
\bibliography{bibliografia,MyLibrary}
\end{document}